\titleformat{\section}[block]{\centering\Large\bfseries}{\thesection}{1em}{}
\titleformat{\subsection}[block]{\centering\large\bfseries}{\thesubsection}{1em}{}
\titleformat{\subsubsection}[block]{\centering\normalsize\bfseries}{\thesubsubsection}{1em}{}
\newtheorem{theorem}{\textbf{Theorem}}[section]
\newtheorem{lemma}{\textbf{Lemma}}[section]
\newtheorem{proposition}{\textbf{Proposition}}[section]
\newtheorem{corollary}{\textbf{Corollary}}[section]
\newtheorem{remark}{\textbf{Remark}}[section]
\newtheorem{definition}{\textbf{Definition}}[section]
\providecommand{\keywords}[1]{\textbf{\textit{Keywords---}} #1}
\numberwithin{equation}{section}
\def\be{\begin{equation}}
\def\ee{\end{equation}}
\def\bea{\begin{eqnarray}}
\def\eea{\end{eqnarray}}
\def\bt{\begin{theorem}}
\def\et{\end{theorem}}
\def\bl{\begin{lemma}}
\def\el{\end{lemma}}
\def\br{\begin{remark}}
\def\er{\end{remark}}
\def\bp{\begin{proposition}}
\def\ep{\end{proposition}}
\def\bc{\begin{corollary}}
\def\ec{\end{corollary}}
\def\bd{\begin{definition}}
\def\ed{\end{definition}}
\def\Pi{\mathbf{\psi}}
\def\R{{\mathbb R}}
\def\T{{\mathbb  T}}
\title{\bf{On the  large time behavior of the 2D inhomogeneous incompressible viscous flows }}
\author{
Song Jiang$^{a}$
\thanks{jiang@iapcm.ac.cn}
\quad\quad
Quan Wang$^{b}$
\thanks{Corresponding author:xihujunzi@scu.edu.cn }
\\ \footnotesize $^a$
 LCP, Institute of Applied Physics and Computational Mathematics,
\\\footnotesize Huayuan Road 6,
Beijing,100088, China
  \\ \footnotesize $^{b}$ College of Mathematics, Sichuan University,
  \footnotesize
 Chengdu, Sichuan, 610065,  China
}
\begin{document}
\maketitle
\begin{abstract}
This paper focuses on the 2D inhomogeneous Navier–Stokes equations modeling stratified flows in a bounded domain under a gravitational potential $f$. Our contributions are summarized as follows.
First, we rigorously characterize the steady states, showing that under the Dirichlet condition $\mathbf{u}|_{\partial \Omega} = \mathbf{0}$, the only admissible equilibria are hydrostatic, satisfying $\nabla p_s = -\rho_s \nabla f$.
Second, we reveal that although the Rayleigh–Taylor instability can induce transient growth, the system ultimately relaxes to a hydrostatic equilibrium. This conclusion is derived from a perturbative analysis around arbitrary hydrostatic profiles.
Third, we identify a necessary and sufficient condition on the initial density perturbation that governs convergence to a linear hydrostatic density $\rho_s = -\gamma f + \beta$ ($\gamma, \beta > 0$).
Finally, we prove improved regularity estimates for strong solutions corresponding to initial data in $H^3(\Omega)$.\\
\keywords{Inhomogeneous Navier-Stokes equations; global stability; large time behavior.}
\end{abstract}

\newpage
\tableofcontents

\newpage
\section{Introduction}

\subsection{Problem and Literature Review}

This paper studies the dynamics of two-dimensional inhomogeneous, incompressible viscous flows, where the fluid density is spatially variable. The non-uniform density distribution gives rise to complex physical phenomena that are central to numerous applications, particularly in geophysical fluid dynamics and in modeling multi-phase flows composed of immiscible, incompressible fluids with distinct densities. The evolution of such flows is governed by the two-dimensional inhomogeneous incompressible Navier-Stokes equations (IINS) on a domain $\Omega$:
   \begin{align}\label{main-2}
\begin{cases}
\rho\frac{\partial \mathbf{u}}{\partial t}+\rho(\mathbf{u} \cdot \nabla )\mathbf{u}
= \nu \Delta \mathbf{u}- \nabla P-\rho\mathbf{g},
 \\ \frac{\partial \rho}{\partial t}+(\mathbf{u}\cdot  \nabla)\rho  =0,
\\ \nabla \cdot  \mathbf{u}=0,
\end{cases}
\end{align}
where  $\mathbf{u}$ represents the velocity field, $\rho$ the density, $P$ the pressure, and $\mathbf{g}$ an external force field. For our purposes, we only consider the case of $\mathbf{g} = \nabla f$, where $f > 0$ is a smooth potential function modeling gravity and possibly other conservative forces. We refer to \cite{Lions1996} for a comprehensive derivation of the system of IINS .
Throughout this study, the system of IINS \eqref{main-2} is supplemented with the following initial and boundary conditions:
\begin{align}\label{main-3}
(\rho, \mathbf{u})|_{t=0} = (\rho_0, \mathbf{u}_0), \quad \mathbf{u}|_{\partial\Omega} = \mathbf{0},
\end{align}
where the domain $\Omega \subset \mathbb{R}^2$ is assumed to have a smooth boundary $\partial\Omega$.

Before detailing the main objectives of this article, we first review key mathematical results for the system of IINS with a non‑zero external force $\mathbf{g} \neq \mathbf{0}$.
The mathematical theory of the IINS exhibits a striking contrast between weak and strong solutions. For weak solutions, the existence theory is largely complete. Early results by Antontsev and Kazhikov \cite{antontsev1973}, Kazhikov \cite{kazhikov1974},, Simon \cite{simon1978, simon1990}, and Fern\'{a}ndez Cara and Guill\'{e}n \cite{Cara1992} established the global‑in‑time weak solutions under minimal assumptions: bounded initial density $\rho_0$, finite‑energy initial velocity $\mathbf{u}_0$, and square‑integrable force $\mathbf{g}$. These results, synthesized in Lions’ monograph \cite{Lions1996}, even accommodate initial vacuum. The long‑standing uniqueness problem for such weak solutions was recently resolved by Hao et al.\cite{Hao2025}.
For strong (and hence unique) solutions, the picture is different. Local existence typically requires a positive lower bound on the initial density, as shown by Ladyzhenskaya–Solonnikov \cite{ladyzhenskaya1978}, Okamoto \cite{okamoto1984}, Padula \cite{padula1982, padula1990}, and Salvi \cite{salvi1991}. Extensions to the vacuum case have been partial, leading either to enhanced‑regularity weak solutions \cite{Kim1987, simon1990} or to strong solutions in exterior domains where the density vanishes only on measure‑zero sets \cite{padula1990}.
In 2003, Choe–Kim \cite{Choe2003} first obtained the local well‑posedness of strong solutions in three dimensions without assuming a positive lower density bound, relying instead on higher regularity of the force and a compatibility condition on the data. Kim \cite{Kim2006} later provided a blow‑up criterion and proved global existence for sufficiently small $\norm{\nabla \mathbf{u}_0}_{L^2}$. For data in Besov spaces, Danchin \cite{Danchin2003, Danchin2004} established the local well‑posedness for large velocities and global well‑posedness when the velocity is small relative to viscosity.
In the physically important case $\mathbf{g} = \nabla f$ (modeling gravity), Zhang et al. \cite{Zhang2014} proved the global regularity of strong solutions in 3D provided the initial energy $\norm{\sqrt{\rho_0} \mathbf{u}_0}_{L^2} + \norm{\rho_0}_{L^2}\norm{f}_{L^2}$ is small and $f \in H^2(\Omega)$. This was later extended by Yu \cite{Yu2018} to allow large external forces.

When the external force is absent ($\mathbf{g} \equiv \mathbf{0}$), the system of IINS \eqref{main-2} admits a significant simplification. In this case, it can be shown that any finite‑energy steady‑state solution—on either a bounded domain $\Omega$ with Dirichlet condition $\mathbf{u}|_{\partial\Omega} = \mathbf{0}$ or on the whole space $\mathbb{R}^n$—must be trivial: $(\mathbf{u}, p) = (\mathbf{0}, p_0)$ for some constant $p_0$.
A direct consequence is that the system immediately rules out Rayleigh–Taylor equilibria, i.e., stationary configurations where a pressure gradient balances a gravitational force. Consequently, the model with $\mathbf{g} \equiv \mathbf{0}$ inherently filters out the rich dynamics associated with the Rayleigh–Taylor instability—a well‑known physical mechanism that can lead to ill‑posedness \cite{Hwang2003, Jiang2014, Gebhard2021}. This fundamental distinction underscores the necessity of refining and extending the analytical framework originally developed for the case $\mathbf{g} \neq \mathbf{0}$.

Building on this simplified structure, subsequent works have significantly advanced the analysis of the IINS system with $\mathbf{g} \equiv \mathbf{0}$. Under smallness of $\norm{\mathbf{u}_0}_{H^{1/2}}$, Craig et al. \cite{Craig2013} obtained global strong solutions in 3D, improving earlier results of Kim \cite{Kim2006}. Abidi et al. \cite{Abidi2010} derived large‑time decay and stability estimates for smooth 3D solutions with constant viscosity, later extended to variable viscosity by Abidi–Zhang \cite{Abidi2015}. Gui et al.\cite{Gui2011} proved existence of large global solutions when the viscosity varies slowly in one direction and the initial density is near a constant.
Further refinements have focused on relaxing the initial data requirements. Huang–Wang \cite{Huang2015} extended the vacuum‑allowing theory of Choe–Kim \cite{Choe2003} to variable viscosity in bounded domains, establishing global unique strong solutions under a smallness condition on $\norm{\nabla \mathbf{u}_0}_{L^2}$ while allowing arbitrarily large initial density. Li \cite{Li2017} showed that local well‑posedness in 3D with nonnegative density can be achieved with reduced regularity and without the compatibility condition previously required in \cite{Choe2003}. In a different direction, Gancedo–García‑Juárez \cite{Gancedo2018} studied the 2D density‑patch problem, proving well‑posedness without smallness assumptions or restrictions on the density jump. Danchin–Mucha \cite{Danchin2019} obtained unique solutions with precise decay estimates without requiring regularity, a positive lower density bound, or compatibility conditions.
Concerning global stability, He et al. \cite{He2021} established global existence and exponential stability in $\mathbb{R}^3$ under a smallness condition on the initial velocity in a homogeneous Sobolev space, allowing vacuum and even compactly supported initial density—an improvement over \cite{Craig2013}. For the 2D Cauchy problem with vacuum,  L\"{u} et al. \cite{Lu2017} proved global existence and large‑time asymptotics. Additional results on global existence and stability for the system of IINS with $\mathbf{g} \equiv \mathbf{0}$ can be found in \cite{Zhang2022, Danchin, Adogbo2025, Qian2025}.

In the presence of a gravitational field ($\mathbf{g} = \nabla f$), the structure of steady states changes fundamentally. As shown in Lemma \ref{Hydrostatic-equilibrium}, any finite‑energy steady state—on a bounded domain with Dirichlet condition or on the whole space—must necessarily be a hydrostatic equilibrium $(\mathbf{u}, \rho, P) = (\mathbf{0}, \rho_s, p_s)$ satisfying $\nabla p_s = -\rho_s \nabla f$.
Recent work by Li \cite{Li2025} demonstrated that a hydrostatic equilibrium with $\nabla \rho_s = h(\mathbf{x}) \nabla f$ and $h(\mathbf{x}_0) > 0$ at some point $\mathbf{x}_0 \in \Omega$ is nonlinearly unstable in every $L^p$‑norm ($1 \leq p \leq \infty$)—a manifestation of the Rayleigh–Taylor instability. This result significantly extends earlier mathematical studies \cite{Hwang2003, Jiang2013, Jiang2014, Mao2024, Xing2024, Fan2025}, which had primarily considered a uniform gravitational field $\mathbf{g} = (0,g)$.
Although \cite{Li2025} establishes that the Rayleigh–Taylor instability can induce rapid short‑time growth,
see \autoref{hadamardyiyixia0202} in the appendix, the absence of any nontrivial steady state with non‑zero velocity strongly suggests that solutions should nevertheless exist globally in time. Moreover, it is conjectured that as $t \to \infty$, any solution near a hydrostatic equilibrium—whether stable or unstable—must converge to a steady state determined by the hydrostatic balance $\nabla p_s = -\rho_s \nabla f$, where $\rho_s$ and $f$ satisfy the stability condition $\nabla \rho_s \cdot \nabla f \leq 0$.
A fundamental open problem is whether an arbitrary solution of \eqref{main-2} with $\mathbf{g} = \nabla f \neq \mathbf{0}$ asymptotically approaches *some* hydrostatic equilibrium, or more specifically, a linear density profile of the form $\rho_s = -\gamma f + \beta$ with $\gamma > 0$.

It is noteworthy that for a constant vertical gravitational field $\nabla f = (0,g)$, the linearly stable stratified hydrostatic equilibrium $(\mathbf{u}, \rho) = (\mathbf{0}, -\gamma z + \beta)$—where a dense fluid underlies light one—is also a steady state of the Boussinesq equations \cite{Pedlosky1987, Smyth-2019, Majda-2003}, derived from \eqref{main-2}. Physically, this configuration is expected to be both linearly and nonlinearly stable. In recent years, its nonlinear stability within the Boussinesq framework has attracted considerable attention; see \cite{Doering2018, Castro2019, Tao2020, Lai2021, Adhikari2022} and references therein.
Our recent work \cite{JW2025, JW20252} proved, within the Boussinesq approximation, that solutions near a hydrostatic equilibrium—stable or unstable—converge to a steady state determined by the hydrostatic balance $\nabla p_s = -\rho_s \nabla f$ (or, with rotation, to a balance involving the Coriolis force). It is natural to expect that similar convergence holds for the full inhomogeneous system \eqref{main-2}.

Building on these insights, the present work aims to advance the theory of global well‑posedness and long‑time dynamics for the system \eqref{main-2} in two significant directions:
\begin{enumerate}
    \item [\rm{1)}] 
We extend the global existence theory for the two‑dimensional IINS from the case $\mathbf{g} \equiv \mathbf{0}$ \cite{Lu2017, Danchin2019, He2021, Craig2013, Zhang2022, Danchin, Adogbo2025, Qian2025} to the physically relevant setting with $\mathbf{g} = \nabla f \neq \mathbf{0}$.
    \item [\rm{2)}] We establish convergence to hydrostatic equilibrium states, which satisfy $\nabla p_s = -\rho_s \nabla f$, thereby extending our previous results for the Boussinesq equations \cite{JW2025, JW20252} to the fully inhomogeneous system \eqref{main-2}.
      \item [\rm{3)}]  We derive the necessary and sufficient conditions under which
the density profile $\rho$ of any solution to \eqref{main-2} — with $\mathbf{g} = \nabla f \neq \mathbf{0}$ — asymptotically approaches the steady-state profile $\rho_s = -\gamma f + \beta$, where $\gamma > 0$.
\end{enumerate}
Our analysis provides a unified framework for stratified flows under conservative external forces and makes  progress on the fundamental open problem of characterizing their asymptotic behavior.

Analyzing the long-time behavior of solutions to \eqref{main-2} with a non-vanishing force field $\mathbf{g} \neq \mathbf{0}$ is considerably more subtle than the case $\mathbf{g} \equiv \mathbf{0}$. 
The first major difficulty appears in the energy estimates. Unlike the zero-force case, establishing the key decay estimates
\begin{align}\label{needestimate}
\int_0^{\infty} t \norm{\nabla \mathbf{u}(t)}_{L^2}^2  dt < \infty, \qquad \sup_{t>0} \, t \norm{\sqrt{\rho} \mathbf{u}(t)}_{L^2}^2 < \infty,
\end{align}
becomes non‑trivial. For the system \eqref{main-2} on a bounded domain with Dirichlet condition $\mathbf{u}|_{\partial\Omega} = \mathbf{0}$ and $\mathbf{g} \equiv \mathbf{0}$, the basic energy balance reads
\[
\frac{1}{2} \frac{d}{dt} \norm{\sqrt{\rho} \mathbf{u}}_{L^2}^2 = -\mu \norm{\nabla \mathbf{u}(t)}_{L^2}^2.
\]
A key identity obtained after multiplying by $t$ is
\[
\frac{1}{2} \frac{d}{dt} \Bigl( t \norm{\sqrt{\rho} \mathbf{u}}_{L^2}^2 \Bigr) + t \mu \norm{\nabla \mathbf{u}(t)}_{L^2}^2 = \frac{1}{2} \norm{\sqrt{\rho} \mathbf{u}}_{L^2}^2.
\]
Since $\int_0^{\infty} \norm{\sqrt{\rho} \mathbf{u}}_{L^2}^2 d\tau < +\infty$, this identity directly yields the decay estimates in \eqref{needestimate}.

In contrast, for $\mathbf{g} = \nabla f \neq \mathbf{0}$, the corresponding weighted identity becomes
\[
\frac{1}{2} \frac{d}{dt} \Bigl( t \norm{\sqrt{\rho} \mathbf{u}}_{L^2}^2 + 2t \norm{\rho f}_{L^1} \Bigr) + t \mu \norm{\nabla \mathbf{u}(t)}_{L^2}^2 = \frac{1}{2} \norm{\sqrt{\rho} \mathbf{u}}_{L^2}^2 + \norm{\rho f}_{L^1}.
\]
This breakdown of the standard argument stems from the non‑integrability of the right‑hand side. Indeed, with a stationary potential $f$, the quantity $\norm{\rho f}_{L^1}$ fails to satisfy $\int_0^{\infty} \norm{\rho f}_{L^1} d\tau < +\infty$. As a result, the decay estimates \eqref{needestimate}—central to the analyses in \cite{Zhang2022, Danchin, Adogbo2025, Qian2025}—are no longer available, and one cannot conclude that $
\norm{\nabla \mathbf{u}(t)}_{L^2}^2 \to 0 \quad \text{as } t \to \infty$.

A second major obstacle is that the techniques employed in previous works—including those in \cite{antontsev1973, kazhikov1974, simon1978, simon1990, Cara1992, Lions1996, ladyzhenskaya1978, okamoto1984, padula1982, padula1990, Choe2003, Kim2006, Danchin2003, Danchin2004, Yu2018}—are insufficient to obtain the uniform-in-time boundedness of $\norm{\nabla \mathbf{u}}_{L^2}^2$. This uniform bound is a crucial prerequisite for analyzing the long-time behavior not only of $\norm{\nabla \mathbf{u}}_{L^2}^2$ itself, but also of $\norm{\mathbf{u}_t}_{L^2}^2$, $\norm{\Delta\mathbf{u}}_{L^2}^2$, and the effective pressure term $\nabla P + \varrho \nabla f$.

The third challenge concerns the determination of the asymptotic density profile. One cannot directly use the continuity equation $\partial_t \rho + \mathbf{u} \cdot \nabla \rho = 0$ to infer that the asymptotic state must be of the form $(\mathbf{u}, \rho) = (\mathbf{0}, \rho_s)$ with $\rho_s$ satisfying the hydrostatic balance $\nabla p_s = -\rho_s \nabla f$. This limitation arises because the continuity equation, being a pure transport law, merely advects the initial density along particle trajectories; it does not impose the functional relation $\rho_s = \rho_s(f)$ required by the hydrostatic condition. Therefore, new methods are needed to characterize the asymptotic density profile $\rho_s$ and to prove its convergence from general initial data.

\subsection{Steady-State Solutions}

In the study of fluid motion equations, a thorough understanding of the system's exact steady states—which represent potential asymptotic behaviors—is often essential. We examine 
the steady-state solutions of IINS \eqref{main-2}, described as follows:

\begin{lemma}[\textbf{Hydrostatic equilibrium}]\label{Hydrostatic-equilibrium}
\label{lemma:hydrostatic_steady_state}
Any steady-state solution $(\mathbf{u}, \rho, P)$ of the system \eqref{main-2}--\eqref{main-3}
with $\mathbf{g}=\nabla f$
 must be a hydrostatic equilibrium of the form:
\begin{align}\label{sss}
(\mathbf{u}, \rho, P) = (0, \rho_s, p_s),
\end{align}
where the pressure $p_s$ and density $\rho_s$ satisfy the hydrostatic balance condition:
\[
\nabla p_s = -\rho_s \nabla f.
\]
\end{lemma}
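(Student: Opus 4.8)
The plan is to show that the velocity component of any steady state must vanish identically, after which the stationary momentum equation degenerates directly into the asserted hydrostatic balance. First I would set $\partial_t\mathbf{u}=\mathbf{0}$ and $\partial_t\rho=0$ in \eqref{main-2}, reducing the system to the stationary equations
\[
\rho(\mathbf{u}\cdot\nabla)\mathbf{u}=\nu\Delta\mathbf{u}-\nabla P-\rho\nabla f,\qquad \mathbf{u}\cdot\nabla\rho=0,\qquad \nabla\cdot\mathbf{u}=0,
\]
supplemented by $\mathbf{u}|_{\partial\Omega}=\mathbf{0}$ from \eqref{main-3}. The linchpin of the argument is the elementary observation that the stationary continuity equation and the incompressibility constraint combine to give $\nabla\cdot(\rho\mathbf{u})=\mathbf{u}\cdot\nabla\rho+\rho\,\nabla\cdot\mathbf{u}=0$; that is, $\rho\mathbf{u}$ is divergence free.

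Next I would test the stationary momentum equation against $\mathbf{u}$ and integrate over $\Omega$, handling the four resulting terms separately. The viscous term yields $\nu\int_\Omega\Delta\mathbf{u}\cdot\mathbf{u}\,dx=-\nu\norm{\nabla\mathbf{u}}_{L^2}^2$, with no boundary contribution because $\mathbf{u}$ vanishes on $\partial\Omega$. The pressure term $-\int_\Omega\nabla P\cdot\mathbf{u}\,dx$ vanishes upon integration by parts, using $\nabla\cdot\mathbf{u}=0$ in the interior and the Dirichlet condition on the boundary. For the convective term I would rewrite $(\mathbf{u}\cdot\nabla)\mathbf{u}\cdot\mathbf{u}=\tfrac12\mathbf{u}\cdot\nabla|\mathbf{u}|^2$ and integrate by parts to obtain $-\tfrac12\int_\Omega|\mathbf{u}|^2\,\nabla\cdot(\rho\mathbf{u})\,dx=0$ by the key identity. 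The decisive step is the gravitational term: integrating by parts gives $-\int_\Omega\rho\mathbf{u}\cdot\nabla f\,dx=\int_\Omega f\,\nabla\cdot(\rho\mathbf{u})\,dx$, the boundary term again vanishing since $\mathbf{u}|_{\partial\Omega}=\mathbf{0}$, and this too is zero because $\rho\mathbf{u}$ is divergence free.

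Assembling these identities, the energy balance collapses to $\nu\norm{\nabla\mathbf{u}}_{L^2}^2=0$, which forces $\nabla\mathbf{u}\equiv\mathbf{0}$; combined with $\mathbf{u}|_{\partial\Omega}=\mathbf{0}$ on the bounded domain $\Omega$, this yields $\mathbf{u}\equiv\mathbf{0}$. Substituting $\mathbf{u}=\mathbf{0}$ back into the stationary momentum equation leaves $-\nabla P-\rho\nabla f=\mathbf{0}$, i.e.\ $\nabla p_s=-\rho_s\nabla f$ with $(\rho_s,p_s)=(\rho,P)$, which is precisely \eqref{sss}.

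The hard part will not be the algebra but the rigorous justification of the integration-by-parts steps at the natural regularity of a finite-energy steady state (say $\mathbf{u}\in H_0^1(\Omega)$ and $\rho\in L^\infty(\Omega)$), and---for the whole-space version alluded to in the statement---the replacement of the vanishing boundary terms by decay at infinity, valid for finite-energy profiles. Conceptually, the essential point is that the forcing $\rho\nabla f$, despite not being an obvious gradient, contributes nothing to the kinetic-energy balance precisely because $\rho\mathbf{u}$ is divergence free; this cancellation is what allows the classical energy method to succeed even though $\mathbf{g}=\nabla f\neq\mathbf{0}$.
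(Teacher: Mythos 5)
Your proposal is correct and follows essentially the same route as the paper: testing the stationary momentum equation against $\mathbf{u}$, using $\nabla\cdot(\rho\mathbf{u})=0$ (from the stationary transport equation plus incompressibility) to annihilate both the convective and gravitational terms, and concluding $\mathbf{u}\equiv\mathbf{0}$ from $\nu\norm{\nabla\mathbf{u}}_{L^2}^2=0$ with the Dirichlet condition. Your closing remarks on justifying the integration by parts at finite-energy regularity are a sensible refinement, but the argument itself matches the paper's proof step for step.
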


\begin{proof}
Steady-state solutions are characterized by the conditions $\partial_t \mathbf{u} = 0$ and $\partial_t \rho = 0$, which simplify the original system to:
\begin{align}
\begin{cases}
\rho (\mathbf{u} \cdot \nabla)\mathbf{u} = \nu \Delta \mathbf{u} - \nabla P - \rho \nabla f, \\
(\mathbf{u} \cdot \nabla)\rho = 0,\\
\nabla \cdot \mathbf{u} = 0, \quad
\mathbf{u}|_{\partial\Omega} = \mathbf{0}.
\end{cases}
\label{eq:steady}
\end{align}

We take the inner product of the equations $\eqref{eq:steady}_1$ with $\mathbf{u}$ and integrate over the domain $\Omega$:
\begin{equation}
\int_\Omega \rho (\mathbf{u} \cdot \nabla)\mathbf{u} \cdot \mathbf{u} \, d\mathbf{x}
- \nu \int_\Omega \Delta \mathbf{u} \cdot \mathbf{u} \, d\mathbf{x}
+ \int_\Omega \nabla P \cdot \mathbf{u} \, d\mathbf{x}
+ \int_\Omega \rho \nabla f \cdot \mathbf{u} \, d\mathbf{x} = 0.
\label{eq:inner_product}
\end{equation}
We analyze each term on the right-hand side of \eqref{eq:inner_product} separately. For the convective term, using the incompressibility condition $\nabla \cdot \mathbf{u} = 0$ and
the divergence theorem, we obtain:
\[
\int_\Omega \rho (\mathbf{u} \cdot \nabla)\mathbf{u} \cdot \mathbf{u} \, d\mathbf{x}
= \frac{1}{2} \int_\Omega \nabla \cdot \left( \rho \mathbf{u} |\mathbf{u}|^2 \right) d\mathbf{x}
- \frac{1}{2} \int_\Omega (\nabla \cdot (\rho \mathbf{u})) |\mathbf{u}|^2 \, d\mathbf{x}.
\]
The first term vanishes due to the divergence theorem and the no-slip boundary condition $\mathbf{u}|_{\partial\Omega} = \mathbf{0}$. For the second term, we observe that:$
\nabla \cdot (\rho \mathbf{u})  = \mathbf{u} \cdot \nabla \rho$,
which equals zero according to the steady-state density transport equation $(\mathbf{u} \cdot \nabla)\rho = 0$. Therefore, the convective term vanishes entirely. As for the viscous term,
applying the divergence theorem, we have
\[
- \nu \int_\Omega \Delta \mathbf{u} \cdot \mathbf{u} \, d\mathbf{x} = \nu \int_\Omega |\nabla \mathbf{u}|^2 \, d\mathbf{x} \geq 0.
\]

The pressure term vanishes, due to the incompressibility condition $\nabla \cdot \mathbf{u} = 0$.
For the gravitational term, one can get
\[
\int_\Omega \rho \nabla f \cdot \mathbf{u} \, d\mathbf{x}
= \int_\Omega \nabla \cdot (\rho f \mathbf{u}) \, d\mathbf{x} - \int_\Omega f \nabla \cdot (\rho \mathbf{u}) \, d\mathbf{x}.
\]
The first term vanishes by the divergence theorem and the boundary condition. For the second term, as shown earlier, $\nabla \cdot (\rho \mathbf{u}) = 0$. Hence, the fourth term  one left hand side of \eqref{eq:inner_product} vanishes.

Substituting the results into \eqref{eq:inner_product}, we obtain $
\nu \int_\Omega |\nabla \mathbf{u}|^2 \, d\mathbf{x} = 0$,
which implies $\nabla \mathbf{u} = 0$ in $\Omega$. Combined with the boundary condition $\mathbf{u}|_{\partial\Omega} = 0$, we conclude $\mathbf{u} \equiv 0$.
Substituting $\mathbf{u} = 0$ into the momentum equation in \eqref{eq:steady} yields:$
\nabla P = -\rho \nabla f$.
The continuity equation $\nabla \cdot \mathbf{u} = 0$ and the density equation $(\mathbf{u} \cdot \nabla)\rho = 0$ are automatically satisfied when $\mathbf{u} \equiv 0$.
Therefore, any smooth steady state solution must be of the form:
\[
(\mathbf{u}, \rho, P) = (0, \rho_s, p_s), \quad \text{with} \quad \nabla p_s = -\rho_s \nabla f,
\]
which is precisely the hydrostatic equilibrium state.
\end{proof}

\subsection{Statement of the Main Theorems}
To analyze the long-time dynamical behavior near a steady-state solution given by \eqref{sss}, we first derive the corresponding perturbation system. Let $(\mathbf{u},\varrho)$ denote the perturbation around a steady state satisfying \eqref{sss}. Substituting $(\mathbf{u},\rho) = (\mathbf{u}, \varrho + \rho_s)$ into system \eqref{main-2}, we obtain the following perturbation equations:
   \begin{align}\label{main-peturbation}
\begin{cases}
\left(\varrho+\rho_s\right)\frac{\partial \mathbf{u}}{\partial t}+\left(\varrho+\rho_s\right)(\mathbf{u} \cdot \nabla )\mathbf{u}
= \nu \Delta \mathbf{u}- \nabla P-\varrho\nabla f,\quad \mathbf{x}\in \Omega,
 \\ \frac{\partial \varrho}{\partial t}+(\mathbf{u}\cdot  \nabla)\varrho +(\mathbf{u}\cdot  \nabla)\rho_s =0,\quad \mathbf{x}\in \Omega,
\\ \nabla \cdot  \mathbf{u}=0,\quad \mathbf{x}\in \Omega,
\end{cases}
\end{align}
which is subject to the following non-slip boundary condition:
\begin{align}\label{main-perturbation-2}
\mathbf{u}|_{\partial\Omega}=\mathbf{0}.
\end{align}

For the perturbation system \eqref{main-peturbation}, the following three theorems establish the regularity of solutions and characterize their long-time dynamical behavior.

 \begin{theorem}\rm{[\textbf{Regularity}]}
 \label{theorem-1}
Assume that there exist two positive constants $\alpha_1$ and $\alpha_2$ such that
$0<\alpha_1\leq \varrho_0+\rho_s\leq \alpha_2<\infty$, and let $(\mathbf{u},\varrho)$ be a solution of the perturbation system \eqref{main-peturbation} under the boundary condition \eqref{main-perturbation-2}. Then the following conclusions hold:
  \begin{enumerate}
\item [\rm{(1)}]  For initial data $(\mathbf{u}_0,\varrho_0)\in H^2\left(\Omega\right)\times L^{\infty}\left(\Omega\right) $, if $f \in W^{2,\infty}\left(\Omega\right)$, we have
\begin{subequations}
     \begin{align}\label{theorem-1-conc-0}
&\mathbf{u} \in L^{\infty}\left(\left(0,\infty\right);W^{2,p}(\Omega)\right)
\cap L^{p}\left(\left(0,\infty\right);W^{1,p}(\Omega)\right),\quad 2\leq p<\infty,\\
&\label{theorem-1-conc-1}
\mathbf{u}_t \in
L^{\infty}\left(\left(0,\infty\right);L^2(\Omega)\right)
\cap L^{2}\left(\left(0,\infty\right);H^{1}(\Omega)\right),\\
\label{theorem-1-conc-2}
&\sqrt{\varrho+\rho_s}\mathbf{u}_t \in
L^{\infty}\left(\left(0,\infty\right);L^2(\Omega)\right)
\cap L^{2}\left(\left(0,\infty\right);L^{2}(\Omega)\right),\\
\label{theorem-1-conc-3}
&\nabla p\in L^{\infty}\left(\left(0,\infty\right);L^2(\Omega)\right),\\
\label{theorem-1-conc-4}
&\varrho \in L^{\infty}\left(\left(0,\infty\right);L^s(\Omega)\right),\quad 1\leq s\leq \infty.
 \end{align}
   \end{subequations}
      \item [\rm{(2)}]  
Under the assumptions in (1), if $\varrho_0\in H^1(\Omega)$, then for any $T>0$, $( \mathbf{u},\varrho)$ further satisfies:
  \begin{subequations}
      \begin{align}\label{result-21}
    &\mathbf{u} \in L^{2}\left([0,T];W^{2,p}(\Omega)\right),\quad
 \nabla \mathbf{u} 
 \in L^{2}\left([0,T];L^{\infty}(\Omega)\right),\\
 \label{result-22}
 &\varrho\in L^{\infty}\left([0,T];H^{1}(\Omega)\right),\quad
 \varrho_t\in L^{\infty}\left([0,T];L^{2}(\Omega)\right).
  \end{align}
    \end{subequations}
   \item [\rm{(3)}] 
Under the conditions in (2), if $\nabla f \in W^{2,\infty}(\Omega)$ and
$(\mathbf{u}_0,\rho_0) \in H^{3}(\Omega)$,
with the initial data satisfying the following compatibility conditions:
\begin{align}
\begin{cases}
\nabla \cdot \mathbf{u}_0=0,\quad \mathbf{u}_0|_{\partial \Omega}=0,\\
\left( \nu \Delta \mathbf{u}_0- \nabla P_0-\varrho_0\nabla f\right)|_{\partial \Omega}=0,
\end{cases}
\end{align}
where $P_0$ is determined by the elliptic system:
\begin{align*}
\begin{cases}
\nabla \cdot \left(
\frac{\nabla P_0}{\varrho_0+\rho_s}
\right)=\nabla \cdot \left(\frac{1}{\varrho_0+\rho_s}\left(
\nu \Delta \mathbf{u}_0-\varrho_0\nabla f\right)-(\mathbf{u}_0\cdot \nabla )\mathbf{u}_0\right),\\
\nabla P_0
\cdot \mathbf{n}|_{\partial \Omega}=
\left( \nu \Delta \mathbf{u}_0-\varrho_0\nabla f\right)
\cdot \mathbf{n}
|_{\partial \Omega},
\end{cases}
\end{align*}
then for any $T>0$ the solution $( \mathbf{u},\varrho)$ further satisfies:
\begin{subequations}
     \begin{align}\label{theorem-1-conc-11}
      &\nabla \mathbf{u}_t\in L^{\infty}\left([0,T];L^{2}(\Omega)\right),\quad
 \mathbf{u}_{tt}
 \in L^{2}\left([0,T];L^{2}(\Omega)\right),\\
 \label{theorem-1-conc-12}
&\mathbf{u} \in L^{2}\left(\left[0,T\right];H^4(\Omega)\right)
\cap C\left(\left[0,T\right];H^{3}(\Omega)\right),\quad 2\leq p<\infty,\\
\label{theorem-1-conc-13}
&\varrho_t\in L^{\infty}\left([0,T];L^{\infty}(\Omega)\right),\quad \varrho  \in C\left(\left[0,T\right];H^{3}(\Omega)\right).
 \end{align}
   \end{subequations}
   \end{enumerate}
 \end{theorem}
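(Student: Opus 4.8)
I would treat the three parts as a hierarchy of \emph{a priori} estimates on a given solution, closing the lowest level globally on $(0,\infty)$ and bootstrapping upward. First note that $\rho:=\varrho+\rho_s$ obeys the pure transport law $\partial_t\rho+\mathbf{u}\cdot\nabla\rho=0$ with $\nabla\cdot\mathbf{u}=0$, so the two-sided bound $\alpha_1\le\rho\le\alpha_2$ persists for all $t$; this gives \eqref{theorem-1-conc-4} at once and the coercivity $\norm{\sqrt{\rho}\,\mathbf{v}}_{L^2}^2\ge\alpha_1\norm{\mathbf{v}}_{L^2}^2$ used throughout. Testing the momentum equation with $\mathbf{u}$ and splitting $\rho=\varrho+\rho_s$, the stationary part of the force disappears since the hydrostatic balance of Lemma \ref{Hydrostatic-equilibrium} yields $\int_\Omega\rho_s\nabla f\cdot\mathbf{u}=-\int_\Omega\nabla p_s\cdot\mathbf{u}=0$, while the continuity equation gives $-\int_\Omega\rho\nabla f\cdot\mathbf{u}=-\tfrac{d}{dt}\int_\Omega\rho f$. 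This produces the energy identity
\begin{equation*}
\frac{d}{dt}\Bigl(\tfrac12\norm{\sqrt{\rho}\,\mathbf{u}}_{L^2}^2+\int_\Omega\rho f\Bigr)+\nu\norm{\nabla\mathbf{u}}_{L^2}^2=0 ,
\end{equation*}
and since $f>0$ keeps $\int_\Omega\rho f$ bounded and nonnegative I obtain $\sup_t\norm{\sqrt{\rho}\,\mathbf{u}}_{L^2}^2<\infty$ and, decisively, $\int_0^\infty\norm{\nabla\mathbf{u}}_{L^2}^2\,dt<\infty$.

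Next I would run two differentiated estimates to prove part (1). Testing the momentum equation with $\mathbf{u}_t$ controls $\tfrac{d}{dt}\norm{\nabla\mathbf{u}}_{L^2}^2+\norm{\sqrt{\rho}\,\mathbf{u}_t}_{L^2}^2$; the convective term is absorbed by the $2$D Ladyzhenskaya and Gagliardo--Nirenberg inequalities together with the Stokes bound $\norm{\mathbf{u}}_{H^2}\lesssim\norm{\mathbf{u}_t}_{L^2}+\norm{\varrho\nabla f}_{L^2}+\text{l.o.t.}$, and the force term is controlled using $f\in W^{2,\infty}$ after transferring a derivative onto $\nabla f$ through the continuity equation (this is exactly where $\nabla^2 f\in L^\infty$ is needed, since $\nabla\varrho$ is unavailable at this regularity). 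Differentiating the momentum equation in $t$ and testing with $\mathbf{u}_t$ then controls $\tfrac{d}{dt}\norm{\sqrt{\rho}\,\mathbf{u}_t}_{L^2}^2+\norm{\nabla\mathbf{u}_t}_{L^2}^2$, and here $\mathbf{u}_t(0)\in L^2$ is obtained for free by evaluating the equation at $t=0$ with $\mathbf{u}_0\in H^2$, so no compatibility condition is required. Applying Gr\"onwall's lemma with $\int_0^\infty\norm{\nabla\mathbf{u}}_{L^2}^2\,dt<\infty$ (and its higher analogue) as an \emph{integrable} coefficient upgrades both estimates to the uniform-in-time bounds \eqref{theorem-1-conc-1}--\eqref{theorem-1-conc-2}. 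Finally, reading the momentum equation as the Stokes system $-\nu\Delta\mathbf{u}+\nabla P=-\rho\mathbf{u}_t-\rho(\mathbf{u}\cdot\nabla)\mathbf{u}-\varrho\nabla f$ and invoking elliptic regularity together with $2$D Sobolev embeddings delivers \eqref{theorem-1-conc-0} and \eqref{theorem-1-conc-3}.

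For part (2), the extra hypothesis $\varrho_0\in H^1$ lets me differentiate the transport equation, test $\nabla\varrho$ against itself, and close $\tfrac{d}{dt}\norm{\nabla\varrho}_{L^2}^2$ by Gr\"onwall with coefficient $\norm{\nabla\mathbf{u}}_{L^\infty}$, which belongs to $L^2(0,T)$ because $\mathbf{u}\in L^2([0,T];W^{2,p})$ and $W^{2,p}\hookrightarrow W^{1,\infty}$ for $p>2$; this gives \eqref{result-22}, whence $\varrho_t=-\mathbf{u}\cdot\nabla(\varrho+\rho_s)\in L^\infty([0,T];L^2)$ and $\mathbf{u}\in L^2([0,T];W^{2,p})$ with $\nabla\mathbf{u}\in L^2([0,T];L^\infty)$ as in \eqref{result-21}. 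For part (3), the stated compatibility conditions force $\mathbf{u}_t(0)\in H^1_0$ and render the time-differentiated system well posed at $t=0$; testing that time-differentiated momentum equation against $\mathbf{u}_{tt}$ produces $\nabla\mathbf{u}_t\in L^\infty(L^2)$ and $\mathbf{u}_{tt}\in L^2(L^2)$, i.e. \eqref{theorem-1-conc-11}. Elliptic bootstrapping on the Stokes system (now with an $H^1$-in-time right-hand side) yields $\mathbf{u}\in C([0,T];H^3)\cap L^2([0,T];H^4)$, and propagating $\nabla^2\varrho,\nabla^3\varrho$ along the transport equation with the now-available bound $\nabla\mathbf{u}\in L^1([0,T];L^\infty)$ gives $\varrho\in C([0,T];H^3)$ and $\varrho_t\in L^\infty([0,T];L^\infty)$, establishing \eqref{theorem-1-conc-12}--\eqref{theorem-1-conc-13}; the assumption $\nabla f\in W^{2,\infty}$ enters the higher force-term estimates here.

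I expect the main obstacle to be the \emph{global}, uniform-in-time closure of part (1). As noted in the introduction, the natural $t$-weighted identity is useless because $\norm{\rho f}_{L^1}$ is not integrable in time, so the decay estimates \eqref{needestimate} behind the force-free theory are unavailable and one cannot rely on $\norm{\nabla\mathbf{u}}_{L^2}\to0$. My remedy is never to ask for decay: each higher estimate is closed on $(0,\infty)$ by Gr\"onwall using the genuinely integrable dissipation $\norm{\nabla\mathbf{u}}_{L^2}^2$ (then $\norm{\sqrt{\rho}\,\mathbf{u}_t}_{L^2}^2$ and $\norm{\nabla\mathbf{u}_t}_{L^2}^2$) as the coefficient, which is possible only because the hydrostatic cancellation $\nabla p_s=-\rho_s\nabla f$ strips the stationary force out of the energy balance. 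A secondary difficulty is the uniform-in-time $L^p$ bookkeeping---$2$D Gagliardo--Nirenberg, Agmon's inequality, and Stokes maximal regularity---required to absorb the nonlinear and force terms while keeping every constant independent of $t$.
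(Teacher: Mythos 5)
Your proposal is correct and follows essentially the same route as the paper: the same energy identity exploiting the hydrostatic cancellation $\int_\Omega \rho_s\nabla f\cdot\mathbf{u}\,d\mathbf{x}=0$, the same decisive rewriting of $-\int_\Omega\varrho\,\mathbf{u}_t\cdot\nabla f\,d\mathbf{x}$ as $-\partial_t\int_\Omega(\varrho+\rho_s)\,\mathbf{u}\cdot\nabla f\,d\mathbf{x}$ plus an integrable remainder via the continuity equation (with $f\in W^{2,\infty}$ entering exactly where you say), the same evaluation of $\mathbf{u}_t(0)$ in $L^2$ without compatibility conditions in part (1), and the same Stokes-regularity, transport-plus-Gr\"onwall, and $\mathbf{u}_{tt}$-testing bootstraps for parts (2)--(3). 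The only divergence is in closing the $\norm{\nabla\mathbf{u}}_{L^2}$-estimate on $(0,\infty)$: you invoke plain Gr\"onwall with the integrable coefficient $\norm{\nabla\mathbf{u}}_{L^2}^2$, whereas the paper routes the argument through Desjardins' logarithmic interpolation inequality \eqref{ln-nablau} and the Bihari--Osgood comparison of Lemma \ref{one-inequality} and Lemma \ref{one-inequality-2} (with $w(z)\sim z\ln z$ and the total-derivative term absorbed as the bounded function $x(t)$); your simpler closure is viable, because after bounding $\norm{\sqrt{\varrho+\rho_s}\,\mathbf{u}}_{L^4}^2$ by Ladyzhenskaya and factoring out the dissipation the right-hand side is at most linear-plus-sublinear in $\norm{\nabla\mathbf{u}}_{L^2}^2$, so the discrepancy is technical rather than structural.
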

 \begin{remark}
The local existence and uniqueness of strong solutions to system \eqref{main-peturbation} (or the original system \eqref{main-2}) have been established in \rm{\cite{hkim1987,Choe2003}}. Our results demonstrate that such local strong solutions are in fact global. Note that for the case $\nabla f\equiv 0$ in \eqref{main-peturbation} or \eqref{main-2}, the global existence of strong solutions has been previously shown in \rm{\cite{Lu2017,Zhang2022, Danchin}}.
 \end{remark}
 
  \begin{theorem}\label{theorem-2}\rm{[\textbf{Large time behavior}]}
 Under the conditions of \autoref{theorem-1},
  for any $1\leq r<\infty$ and
  the solution $( \mathbf{u},\varrho)$ of the problem \eqref{main-peturbation}- \eqref{main-perturbation-2}, we have the following conclusions:
  \begin{enumerate}
\item [\rm{(1)}] The solutions of the problem \eqref{main-peturbation}- \eqref{main-perturbation-2}
satisfy the following asymptotic properties:
\begin{subequations}
     \begin{align}\label{theorem-2-conc-1}
&\norm{\mathbf{u} }_{W^{1,r}}\to 0,\quad t\to \infty,\\
\label{theorem-2-conc-2}
&\norm{\mathbf{u}_t }_{L^{2}}\to 0,\quad t\to \infty, \\
\label{theorem-2-conc-3}
&\Delta \mathbf{u}\rightharpoonup 0 \quad \text{in} \quad L^2(\Omega),\quad t\to \infty,\\
\label{theorem-2-conc-4}
&\nabla P+\varrho \nabla f\rightharpoonup 0 \quad \text{in} \quad L^2(\Omega),\quad t\to \infty.
 \end{align}
 \end{subequations}
 
 \item [\rm{(2)}] For any $\gamma>0$ and $\beta \in \R$, we have
 \begin{subequations}
     \begin{align}
\label{theorem-2-conc-4-1}
&\int_{\Omega}\varrho f \,d\mathbf{x}\to I_1,\quad t\to \infty,\\
\label{theorem-2-conc-4}
&\norm{\varrho +\rho_s+\gamma f-\beta}_{L^{2}}^2 \to I_2,\quad t\to \infty,
 \end{align}
 \end{subequations}
   where $I_1$ and $I_2$ are two constants which satisfy 
   \begin{subequations}
     \begin{align}\label{I-11a}
              \begin{aligned}
     & I_1\leq \frac{\norm{\sqrt{\varrho+\rho_s}\mathbf{u}_0 }_{L^{2}}^2}{2}+
     \int_{\Omega}\varrho_0 f \,d\mathbf{x},\\ 
   &0\leq I_2\leq
   \gamma \norm{\sqrt{\varrho+\rho_s}\mathbf{u}_0 }_{L^{2}}^2+\norm{\varrho_0+\rho_s+\gamma f-\beta}_{L^{2}}^2,\\
&\norm{\varrho_0+\rho_s+\gamma f(x,y)-\beta}_{L^2}^2-I_2
  =2\gamma \left(\int_{\Omega}\varrho _0f \,d\mathbf{x}- I_1\right).
 \end{aligned}
  \end{align}
 \end{subequations}
  \item [\rm{(3)}]
  Let $\varrho\nabla f $ be decomposed into $ \varrho\nabla f = \mathbf{w} + \nabla q$, where $\mathbf{w}  \in L^2$ is the divergence-free part satisfying $\nabla \cdot \mathbf{w} = 0$ and $\mathbf{w}  \cdot 
  \mathbf{n}  = 0$ on $\partial \Omega$ , and $\nabla q \in L^2$
   is the curl-free part satisfying $\int_{\Omega}q\,d\mathbf{x}  = 0$.
   Then, as $t \to \infty$, we have the following asymptotic properties:
    \begin{subequations}
     \begin{align}\label{theorem-2-conc-4-1-case-1}
      &\nu \mathbb{P}\Delta  \mathbf{u}(t) -  \mathbf{w}(t) \to 0 \quad \text{in}\quad L^2(\Omega),\\ \label{theorem-2-conc-4-1-case-2}
        &\mathbf{w}(t) \rightharpoonup 0 \quad \text{in}\quad L^2(\Omega),\\
\label{theorem-2-conc-4-1-case-3}
&\nu (\mathbb{I}-\mathbb{P})\Delta  \mathbf{u}(t) 
-\nabla P(t) - \nabla q(t) \to 0 \quad \text{in}\quad L^2(\Omega),
  \end{align}
 \end{subequations}
 where $\mathbb{P}$ is the corrresponding Leray projection operator .
  \item [\rm{(4)}] Suppose that $\abs{\partial_{x_1} f}\geq f_0>0$ or $\abs{\partial_{x_2} f}\geq f_0>0$.
As $t \to \infty$, $\varrho$ converges to a steady state $\rho^* $ in $L^2(\Omega)$
satisfying $\mathbb{P}\rho^* \nabla f=0$
if and only if
    \begin{subequations}
     \begin{align}\label{theorem-2-conc-4-1-case-4}
\lim_{t\to+\infty}(\mathbb{I}-\mathbb{P})\varrho\nabla f =\rho^*\nabla f\quad \text{in $L^2(\Omega)$ and}\quad
\norm{\rho^*+\rho_s}_{L^2}=\norm{\varrho_0+\rho_s}_{L^2}.
      \end{align}
 \end{subequations}
 In that case, as $t \to \infty$, we further have
     \begin{subequations}
     \begin{align}\label{theorem-2-conc-4-1-case-1-delta}
      &\nu \Delta  \mathbf{u}(t) \to 0 \quad \text{in}\quad L^2(\Omega),\\
      \label{theorem-2-conc-4-1-case-2-delta}
      &\nabla P+\varrho \nabla f\to 0 \quad \text{in} \quad L^2(\Omega).
  \end{align}
 \end{subequations}
\end{enumerate}
 \end{theorem}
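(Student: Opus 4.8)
The plan is to base everything on a single Lyapunov identity. Writing $\rho=\varrho+\rho_s$, I would test the momentum equation in \eqref{main-peturbation} against $\mathbf{u}$ to get $\tfrac12\tfrac{d}{dt}\norm{\sqrt{\rho}\,\mathbf{u}}_{L^2}^2+\nu\norm{\nabla\mathbf{u}}_{L^2}^2=-\int_\Omega\varrho\nabla f\cdot\mathbf{u}\,d\mathbf{x}$. Since $\rho$ solves the transport equation $\partial_t\rho+\mathbf{u}\cdot\nabla\rho=0$ and $\nabla p_s=-\rho_s\nabla f$ by Lemma \ref{Hydrostatic-equilibrium}, a short integration by parts gives $\tfrac{d}{dt}\int_\Omega\varrho f\,d\mathbf{x}=\int_\Omega\varrho\,\mathbf{u}\cdot\nabla f\,d\mathbf{x}$, the $\rho_s$-piece vanishing because $\int_\Omega\rho_s\mathbf{u}\cdot\nabla f=-\int_\Omega\mathbf{u}\cdot\nabla p_s=0$. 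Adding the two identities produces the functional $E(t):=\tfrac12\norm{\sqrt{\rho}\,\mathbf{u}}_{L^2}^2+\int_\Omega\varrho f\,d\mathbf{x}$ with $\tfrac{d}{dt}E=-\nu\norm{\nabla\mathbf{u}}_{L^2}^2\le0$. Because $\rho\ge\alpha_1>0$, $f>0$, and both $\int_\Omega\rho\,d\mathbf{x}$ and $\norm{\rho}_{L^2}$ are conserved by the transport flow, $E$ is bounded below; hence $E(t)$ has a finite limit and $\int_0^\infty\norm{\nabla\mathbf{u}}_{L^2}^2\,dt<\infty$. These facts, together with the global bounds of \autoref{theorem-1}(1), drive the whole theorem.

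For part (1) I would first promote integrability to decay. Since $\nabla\mathbf{u},\nabla\mathbf{u}_t\in L^2((0,\infty);L^2)$, Cauchy--Schwarz shows $\tfrac{d}{dt}\norm{\nabla\mathbf{u}}_{L^2}^2\in L^1(0,\infty)$; combined with $\norm{\nabla\mathbf{u}}_{L^2}^2\in L^1$, the elementary fact ``an $L^1$ function with $L^1$ derivative tends to $0$'' gives $\norm{\nabla\mathbf{u}}_{L^2}\to0$. Poincar\'e then yields $\norm{\mathbf{u}}_{H^1}\to0$, and interpolating against the uniform bound $\mathbf{u}\in L^\infty((0,\infty);W^{2,p})$ gives \eqref{theorem-2-conc-1}; in particular $\norm{\mathbf{u}}_{L^\infty}\to0$ in 2D. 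The same lemma applied to $\norm{\sqrt{\rho}\,\mathbf{u}_t}_{L^2}^2$, using the time-differentiated momentum equation and the global estimates underlying \autoref{theorem-1}, gives \eqref{theorem-2-conc-2}. Rewriting the momentum equation as $\nu\Delta\mathbf{u}-\nabla P-\varrho\nabla f=\rho\mathbf{u}_t+\rho(\mathbf{u}\cdot\nabla)\mathbf{u}$ and noting the right side tends to $0$ strongly in $L^2$ (as $\rho$ is bounded, $\mathbf{u}_t\to0$ in $L^2$, and $\norm{(\mathbf{u}\cdot\nabla)\mathbf{u}}_{L^2}\le\norm{\mathbf{u}}_{L^\infty}\norm{\nabla\mathbf{u}}_{L^2}\to0$) yields the key strong limit $\nu\Delta\mathbf{u}-\nabla P-\varrho\nabla f\to0$. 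Finally \eqref{theorem-2-conc-3} follows from the uniform $L^2$-bound on $\Delta\mathbf{u}$ and $\int_\Omega\Delta\mathbf{u}\cdot\boldsymbol{\psi}=\int_\Omega\mathbf{u}\cdot\Delta\boldsymbol{\psi}\to0$ for $\boldsymbol{\psi}\in C_c^\infty$, whence also the weak convergence $\nabla P+\varrho\nabla f\rightharpoonup0$.

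Parts (2) and (3) are then mostly algebraic. Since $\norm{\mathbf{u}}_{L^2}\to0$ and $E$ converges, $\int_\Omega\varrho f\to I_1:=\lim E$, and $I_1\le E(0)$ gives the first bound in \eqref{I-11a} and \eqref{theorem-2-conc-4-1}. Differentiating $\norm{\varrho+\rho_s+\gamma f-\beta}_{L^2}^2$ in time, the $\rho^2$- and mass-terms drop by the transport structure and only $2\gamma\,\tfrac{d}{dt}\!\int_\Omega\varrho f$ survives, so $\norm{\varrho+\rho_s+\gamma f-\beta}_{L^2}^2-2\gamma\!\int_\Omega\varrho f$ is conserved; this gives the third relation in \eqref{I-11a} and the limit $I_2$, with $I_2\ge0$ automatic and the upper bound following from the bound on $I_1$. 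For part (3) I would apply the Leray projection $\mathbb{P}$ and $\mathbb{I}-\mathbb{P}$ to the strong limit $\nu\Delta\mathbf{u}-\nabla P-\varrho\nabla f\to0$: using $\mathbb{P}\nabla P=0$, $\mathbb{P}(\varrho\nabla f)=\mathbf{w}$, $(\mathbb{I}-\mathbb{P})(\varrho\nabla f)=\nabla q$ gives \eqref{theorem-2-conc-4-1-case-1} and \eqref{theorem-2-conc-4-1-case-3}, while $\mathbb{P}\Delta\mathbf{u}\rightharpoonup0$ (continuity of $\mathbb{P}$ applied to $\Delta\mathbf{u}\rightharpoonup0$) combined with \eqref{theorem-2-conc-4-1-case-1} gives \eqref{theorem-2-conc-4-1-case-2}.

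Part (4) is the crux and the main obstacle. The forward implication is easy: $\varrho\to\rho^*$ in $L^2$ forces $\varrho\nabla f\to\rho^*\nabla f$, hence $(\mathbb{I}-\mathbb{P})\varrho\nabla f\to(\mathbb{I}-\mathbb{P})\rho^*\nabla f=\rho^*\nabla f$ since $\mathbb{P}\rho^*\nabla f=0$, while $\norm{\rho^*+\rho_s}_{L^2}=\lim\norm{\varrho+\rho_s}_{L^2}=\norm{\varrho_0+\rho_s}_{L^2}$ by conservation of the transported $L^2$-norm. For the converse I would apply $\mathbb{P}$ to condition \eqref{theorem-2-conc-4-1-case-4} to obtain $\mathbb{P}\rho^*\nabla f=0$, then write $\varrho\nabla f=\mathbf{w}+\nabla q\rightharpoonup0+\rho^*\nabla f$ weakly using $\mathbf{w}\rightharpoonup0$ from part (3) and $\nabla q\to\rho^*\nabla f$ from \eqref{theorem-2-conc-4-1-case-4}. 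The nondegeneracy hypothesis $\abs{\partial_{x_i}f}\ge f_0>0$ is exactly what lets me invert this: multiplying the corresponding component of $\varrho\nabla f$ by the bounded factor $1/\partial_{x_i}f$ turns $\varrho\nabla f\rightharpoonup\rho^*\nabla f$ into $\varrho\rightharpoonup\rho^*$ in $L^2$. The decisive step is the weak-to-strong upgrade: weak convergence $\varrho+\rho_s\rightharpoonup\rho^*+\rho_s$ together with the norm equality $\norm{\varrho+\rho_s}_{L^2}\equiv\norm{\varrho_0+\rho_s}_{L^2}=\norm{\rho^*+\rho_s}_{L^2}$ gives strong convergence $\varrho\to\rho^*$ in the Hilbert space $L^2$. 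In that case $\varrho\nabla f\to\rho^*\nabla f=\nabla q^*$ strongly (a pure gradient, since $\mathbb{P}\rho^*\nabla f=0$), so the Stokes forcing $-\rho\mathbf{u}_t-\rho(\mathbf{u}\cdot\nabla)\mathbf{u}-\varrho\nabla f\to-\nabla q^*$ in $L^2$; continuity of the Stokes solution operator then forces $\mathbf{u}\to0$ in $H^2$ and $\nabla P\to-\nabla q^*$, i.e. \eqref{theorem-2-conc-4-1-case-1-delta}--\eqref{theorem-2-conc-4-1-case-2-delta}. The genuine difficulty throughout is that only weak convergence is available for the force and pressure in general; strong convergence of $\varrho$ cannot be had without both the geometric nondegeneracy of $\nabla f$ and the $L^2$-norm conservation, and these are precisely the two ingredients encoded in \eqref{theorem-2-conc-4-1-case-4}.
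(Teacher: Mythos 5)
Your proposal is correct, and its overall architecture coincides with the paper's: the basic energy identity \eqref{estimates-2} giving $\int_0^\infty\norm{\nabla\mathbf{u}}_{L^2}^2\,dt<\infty$ and the limit $I_1$ of $\int_\Omega\varrho f\,d\mathbf{x}$; the strong $L^2$-vanishing of the Stokes forcing $\rho\mathbf{u}_t+\rho(\mathbf{u}\cdot\nabla)\mathbf{u}$ driving (1) and (3); Leray projection of the momentum balance for (3); and, for (4), exactly the paper's three ingredients — $\mathbf{w}\rightharpoonup 0$, division by the sign-definite component $\partial_{x_i}f$ to pass from $\varrho\nabla f\rightharpoonup\rho^*\nabla f$ to $\varrho\rightharpoonup\rho^*$, and the weak-convergence-plus-norm-equality upgrade to strong $L^2$ convergence, followed by the Stokes-regularity step with $\nabla q^*=\rho^*\nabla f$. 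You deviate in two places, both legitimately and in fact more economically. First, for the decay in (1) the paper runs its logarithmic differential inequality through Lemma \ref{one-inequality-2} and Barbălat's lemma, whereas you invoke the elementary fact that an absolutely continuous $L^1(0,\infty)$ function with $L^1$ derivative tends to zero, using $\frac{d}{dt}\norm{\nabla\mathbf{u}}_{L^2}^2=2\int_\Omega\nabla\mathbf{u}:\nabla\mathbf{u}_t\,d\mathbf{x}$ together with $\nabla\mathbf{u}_t\in L^2\left((0,\infty);L^2\right)$ from Lemma \ref{lemma-nablau_t-l3} — available to you since Theorem \ref{theorem-2} is stated under the hypotheses of Theorem \ref{theorem-1}; the same mechanism replaces Barbălat for $\norm{\sqrt{\varrho+\rho_s}\,\mathbf{u}_t}_{L^2}$. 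Second, for (2) the paper changes variables to $\theta=\varrho+\rho_s+\gamma f-\beta$, rederives a complete energy identity $E_\gamma(t)+\gamma\nu\int_0^t\norm{\nabla\mathbf{u}}_{L^2}^2\,d\tau=E_\gamma(0)$ for the shifted system \eqref{213-1}, and then eliminates the dissipation integral against \eqref{estimates-2}; your observation that $\norm{\varrho+\rho_s+\gamma f-\beta}_{L^2}^2-2\gamma\int_\Omega\varrho f\,d\mathbf{x}$ is exactly conserved (by conservation of $\norm{\varrho+\rho_s}_{L^2}$ and of mass under the transport equation, the $f$-dependent terms being constant) is precisely the difference of those two identities and yields all three relations in \eqref{I-11a} in one stroke, avoiding the recomputation. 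Two small points to write out in a polished version: boundedness of $1/\partial_{x_i}f$ requires noting that $\abs{\partial_{x_i}f}\geq f_0>0$ together with continuity forces $\partial_{x_i}f$ to have one sign; and in the sufficiency direction of (4), $\mathbb{P}\rho^*\nabla f=0$ follows from \eqref{theorem-2-conc-4-1-case-4} because the range of $\mathbb{I}-\mathbb{P}$ (the gradient fields) is closed in $L^2$, so the strong limit $\rho^*\nabla f$ is itself a gradient.
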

 
  \begin{remark}
Note that for systems \eqref{main-peturbation} or \eqref{main-2} with $\nabla f \equiv 0$, the large-time behavior of solutions has been established in previous works, see for instance \rm{\cite{Zhang2022, Danchin, Adogbo2025, Qian2025}}. To our knowledge, Theorem \ref{theorem-2} provides the first results concerning the large-time asymptotic behavior of solutions to the non-homogeneous Navier--Stokes system \eqref{main-2} with non-vanishing gravitational forcing $\nabla f \ne 0$.
\end{remark}

   \begin{theorem}\label{theorem-3}
\rm{[\textbf{Large time behavior}]}
 Under the conditions of \autoref{theorem-1},
  for the solution $( \mathbf{u},\varrho)$ of the problem \eqref{main-peturbation}- \eqref{main-perturbation-2}, the following two conclusions hold:  
  \begin{enumerate}
   \item [\rm{(1)}] The following asymptotical result holds
    \begin{subequations}
         \begin{align}\label{theorem-2-conc-2-2-1}
    &\int_\Omega \varrho f d\mathbf{x}\to 0,\quad t\to \infty,
     \end{align}
      \end{subequations}
    if and only if there exist $\gamma>0$ and $\beta$ such that
     \begin{subequations}
  \begin{align}\label{dineg-1}
\begin{aligned}
\norm{\varrho_0+\rho_s+\gamma f(x,y)-\beta}_{L^2}^2
-\lim_{t\to +\infty}\norm{\varrho +\rho_s+\gamma f-\beta}_{L^{2}}^2
 =2\gamma \int_\Omega \varrho _0f d\mathbf{x}.
\end{aligned}
  \end{align}
   \end{subequations}
\item [\rm{(2)}] 
The following asymptotical result holds
    \begin{subequations}
         \begin{align}\label{theorem-2-conc-2-2}
    &\norm{\varrho +\rho_s-(-\gamma f+\beta)}_{L^{2}} \to 0,\quad t\to \infty,
     \end{align}
      \end{subequations}
    if and only if there exist $\gamma>0$ and $\beta$ such that
     \begin{subequations}
  \begin{align}\label{dineg}
\begin{aligned}
 \lim_{t\to\infty}2\gamma 
\int_\Omega (\varrho_0 -\varrho(t) )f d\mathbf{x}
= \norm{\varrho_0+\rho_s+\gamma f(x,y)-\beta}_{L^2}^2.
\end{aligned}
  \end{align}
   \end{subequations}
\end{enumerate}
 \end{theorem}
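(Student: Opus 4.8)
The plan is to treat both equivalences as purely algebraic rearrangements of the balance identity already established in \autoref{theorem-2}(2); the analytic substance---existence of the limits and the conservation relation linking them---is done there, so here the task is only to reorganize that information while carefully tracking the roles of $\gamma$ and $\beta$. Concretely, I would first record the facts supplied by \autoref{theorem-2}(2): for every $\gamma>0$ and $\beta\in\R$ the limits
\[
I_1=\lim_{t\to\infty}\int_{\Omega}\varrho f\,d\mathbf{x},
\qquad
I_2=\lim_{t\to\infty}\norm{\varrho+\rho_s+\gamma f-\beta}_{L^2}^2
\]
exist, and by the last line of \eqref{I-11a} they satisfy
\[
\norm{\varrho_0+\rho_s+\gamma f-\beta}_{L^2}^2-I_2
=2\gamma\Bigl(\int_{\Omega}\varrho_0 f\,d\mathbf{x}-I_1\Bigr).
\]
The key structural remark, which I would isolate at the outset, is that $I_1$ does not depend on $(\gamma,\beta)$, whereas $I_2=I_2(\gamma,\beta)$ does.

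For part (1), I would note that \eqref{theorem-2-conc-2-2-1} is exactly the statement $I_1=0$. Substituting the balance identity into the left-hand side of \eqref{dineg-1} collapses that condition to $2\gamma\bigl(\int_{\Omega}\varrho_0 f\,d\mathbf{x}-I_1\bigr)=2\gamma\int_{\Omega}\varrho_0 f\,d\mathbf{x}$, i.e.\ $2\gamma I_1=0$; since $\gamma>0$ this is equivalent to $I_1=0$. Because $I_1$ is pair-independent, \eqref{dineg-1} then holds for some admissible $(\gamma,\beta)$ precisely when it holds for every such pair, and in either reading exactly when $I_1=0$, which gives the equivalence.

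For part (2), I would first use $\varrho+\rho_s-(-\gamma f+\beta)=\varrho+\rho_s+\gamma f-\beta$ together with the definition of $I_2$ to identify the convergence \eqref{theorem-2-conc-2-2} with $I_2(\gamma,\beta)=0$. Expanding the left-hand side of \eqref{dineg} gives $2\gamma\bigl(\int_{\Omega}\varrho_0 f\,d\mathbf{x}-I_1\bigr)$, which by the balance identity equals $\norm{\varrho_0+\rho_s+\gamma f-\beta}_{L^2}^2-I_2$; hence \eqref{dineg} reduces to $I_2=0$, matching \eqref{theorem-2-conc-2-2} for the same pair $(\gamma,\beta)$.

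I do not anticipate a genuine analytic obstacle, since no estimate beyond \autoref{theorem-2} is required. The one step I would guard most carefully is the quantifier bookkeeping: the pair-independence of $I_1$ makes part (1) hold uniformly in $(\gamma,\beta)$, while the pair-dependence of $I_2$ means the equivalence in part (2) must be read relative to the specific pair realizing the stated limit.
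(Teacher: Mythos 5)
Your proposal is correct and is essentially the paper's own proof: the paper likewise reduces both equivalences to the identities from \autoref{theorem-2}(2), eliminating the dissipation integral $2\gamma\nu\int_0^{\infty}\norm{\nabla\mathbf{u}(\tau)}_{L^2}^2\,d\tau$ between \eqref{new-two} and \eqref{Egamma-2}, which is precisely the balance identity $\eqref{I-11a}_3$ you invoke, after which part (1) collapses to $I_1=0$ and part (2) to $I_2(\gamma,\beta)=0$. Your explicit quantifier bookkeeping (pair-independence of $I_1$ versus pair-dependence of $I_2$) is a minor refinement over the paper's wording but does not change the argument.
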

\begin{remark}
Theorem \ref{theorem-3} establishes, for the first time, necessary and sufficient conditions for predicting the large-time asymptotic behavior of the density profile in the non-homogeneous Navier--Stokes system \eqref{main-2} with non-vanishing gravitational forcing $\nabla f \ne 0$.
\end{remark}


For the nonlinear problem \eqref{main-peturbation} on a general bounded domain $\Omega$ with smooth boundary and subject to the boundary condition \eqref{main-perturbation-2}, it is highly challenging to improve the regularity of the result \eqref{theorem-2-conc-2} from the $L^2$-norm to the $H^1$-norm, and that of \eqref{theorem-2-conc-3} from the $H^{-1}$-norm to the $L^2$-norm.
However, under the specific conditions of a stable profile $\rho_s$ satisfying $\nabla \rho_s = -\delta(\mathbf{x}) \nabla f$ with a uniformly positive constant $\delta(\mathbf{x}) \geq \delta_0 > 0$, and for the special flat domain $\Omega = \mathbb{T} \times (0, h)$, the linearized problem admits an enhancement of these results. Consider the linear system:
  
   \begin{align}\label{main-peturbation-linear}
\begin{cases}
\rho_s\frac{\partial \mathbf{u}}{\partial t}= \nu \Delta \mathbf{u}- \nabla P-\varrho\nabla f,\quad \mathbf{x}\in \T\times (0,h),
 \\ \frac{\partial \varrho}{\partial t}=\delta ( \mathbf{x})\mathbf{u}\cdot  \nabla f ,\quad \mathbf{x}\in T\times (0,h),,
\\ \nabla \cdot  \mathbf{u}=0,\quad \mathbf{x}\in \T\times (0,h),
\end{cases}
\end{align}
subject to the free boundary condition
\begin{align}\label{main-perturbation-free}
u_2|_{x_2=0,h}
=\partial_{x_2}u_1|_{x_2=0,h}=0,
\end{align}
the results \eqref{theorem-2-conc-2} and \eqref{theorem-2-conc-3} can be improved as follows:
 \begin{theorem}\label{theorem-4}\rm{[\textbf{Linear problem}]}
 The solutions of the problem \eqref{main-peturbation-linear} subject to free boundary condition \eqref{main-perturbation-free}
satisfy the following asymptotic properties:
\begin{subequations}
     \begin{align}\label{theorem-2-conc-1-linear}
&\norm{\mathbf{u} }_{W^{1,r}}\to 0,\quad t\to \infty,\\
\label{theorem-2-conc-2-linear}
&\norm{\mathbf{u}_t }_{H^{1}}\to 0,\quad t\to \infty.
 \end{align}
 \end{subequations}
 Furthermore, if $\nabla f=(0,g)$ is a constant and $\delta ( \mathbf{x})\equiv \delta_0>0$, we further have
\begin{subequations}
     \begin{align}
\label{theorem-2-conc-3-linear}
&\norm{\nabla P+\varrho (0,g)}_{L^{2}}\to 0 ,\quad t\to \infty.
 \end{align}
 \end{subequations}

 \end{theorem}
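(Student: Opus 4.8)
The plan is to exploit the Lyapunov structure forced by the stable-stratification hypothesis $\delta(\mathbf x)\ge\delta_0>0$ together with the explicit neutral modes permitted by the flat geometry $\T\times(0,h)$ and the free boundary condition \eqref{main-perturbation-free}. First I would test the momentum equation of \eqref{main-peturbation-linear} with $\mathbf u$ and use $\mathbf u\cdot\nabla f=\delta^{-1}\varrho_t$ from the density equation to obtain
\begin{align*}
\frac{d}{dt}\Bigl(\tfrac12\int_\Omega\rho_s|\mathbf u|^2\,d\mathbf x+\tfrac12\int_\Omega\tfrac{\varrho^2}{\delta}\,d\mathbf x\Bigr)=-\nu\|\nabla\mathbf u\|_{L^2}^2 .
\end{align*}
Since $\delta\ge\delta_0>0$, the functional $E(t)$ on the left is positive definite, hence bounded and non-increasing, and $\int_0^\infty\|\nabla\mathbf u\|_{L^2}^2\,dt<\infty$. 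Differentiating the system in $t$ and repeating the test with $\mathbf u_t$ (using $\varrho_{tt}=\delta\,\mathbf u_t\cdot\nabla f$) gives the identical structure for $E_1(t)=\tfrac12\|\sqrt{\rho_s}\mathbf u_t\|_{L^2}^2+\tfrac12\|\varrho_t/\sqrt\delta\|_{L^2}^2$, so that $\int_0^\infty\|\nabla\mathbf u_t\|_{L^2}^2\,dt<\infty$ and $\|\mathbf u_t\|_{L^2},\|\varrho_t\|_{L^2}$ are bounded uniformly in time. The elementary fact that a nonnegative $L^1(0,\infty)$ function with $L^1$ derivative tends to $0$, applied to $\|\nabla\mathbf u\|_{L^2}^2$ (whose derivative is bounded by $\|\nabla\mathbf u\|_{L^2}\|\nabla\mathbf u_t\|_{L^2}\in L^1$), already yields $\|\nabla\mathbf u\|_{L^2}\to0$.

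To turn this into convergence of full norms I would invoke the LaSalle invariance principle. The uniform higher-order bounds (the linear, free-slip analogue of \autoref{theorem-1}) make the orbit precompact in $H^2\times L^2$; the $\omega$-limit set is then nonempty, invariant, and, because $\dot E=0$ there, contained in $\{\nabla\mathbf u=0\}$. By the free-slip condition any such equilibrium has $\mathbf u\equiv(b,0)$, and the steady density equation forces $b\,\partial_{x_1}f\equiv0$; thus when $f$ genuinely depends on $x_1$ the only neutral velocity is $\mathbf u=0$, while in the constant-gravity case $\nabla f=(0,g)$ the conserved net momentum $\int_\Omega\rho_s u_1\,d\mathbf x$ carries a single neutral mode $b$, which I would normalize to zero. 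Consequently $\mathbf u\to0$ strongly in $H^2$; since $H^2\hookrightarrow W^{1,r}$ for all finite $r$ in two dimensions, this is \eqref{theorem-2-conc-1-linear}, and in particular $\|\Delta\mathbf u\|_{L^2}\to0$.

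Because the dynamics on the kernel is trivial, the $\omega$-limit analysis also forces $\mathbf u_t$ and $\varrho_t$ to vanish along the trajectory, i.e. $E_1(t)\to0$ and $\|\mathbf u_t\|_{L^2}\to0$; combined with $\int_0^\infty\|\nabla\mathbf u_t\|_{L^2}^2\,dt<\infty$ and precompactness this upgrades to $\|\mathbf u_t\|_{H^1}\to0$, which is \eqref{theorem-2-conc-2-linear} --- and here no normalization is needed, since the conserved mode is time-independent and carries no component of $\mathbf u_t$. Finally, in the constant-gravity, constant-$\delta$ setting I would read the pressure balance off the momentum equation rewritten as $\nabla P+\varrho(0,g)=\nu\Delta\mathbf u-\rho_s\mathbf u_t$: both $\|\Delta\mathbf u\|_{L^2}\to0$ and $\|\mathbf u_t\|_{L^2}\to0$ have just been established, so the right-hand side tends to $0$ in $L^2$, giving \eqref{theorem-2-conc-3-linear}.

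The step I expect to be hardest is the passage from the dissipative integrability estimates to convergence of the full norms, i.e. the LaSalle/$\omega$-limit argument together with the correct bookkeeping of the neutral subspace. The hypothesis $\delta\ge\delta_0>0$ is indispensable---it is exactly what makes the density contribution to $E$ positive definite, so that $E$ is a genuine Lyapunov functional---and the flat free-slip geometry is what renders the kernel explicit, supplies the precompactness, and isolates the single conserved horizontal momentum; neither ingredient is available for the general nonlinear problem on an arbitrary domain, which is why \autoref{theorem-2} could reach only the weaker $H^{-1}$ and $L^2$ statements.
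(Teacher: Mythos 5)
Your two energy identities are correct and even slightly cleaner than the paper's (your $E_1$-identity $\frac{d}{dt}E_1=-\nu\norm{\nabla\mathbf u_t}_{L^2}^2$ is an exact version of the inequality in Lemma \ref{lemma-nablau_t-l3-linear}), the Barb\u{a}lat-style argument for $\norm{\nabla\mathbf u}_{L^2}\to0$ is fine, and your bookkeeping of the neutral mode $\mathbf u=(b,0)$ with the conserved momentum $\int_\Omega\rho_s u_1\,d\mathbf x$ is a genuine subtlety that the paper glosses over. The gap is the LaSalle step. Invariance-principle arguments need the orbit to be precompact in the topology in which you pass to the limit, and the available uniform-in-time bounds do not deliver precompactness in $H^2\times L^2$: Stokes regularity applied to $\rho_s\mathbf u_t=\nu\Delta\mathbf u-\nabla P-\varrho\nabla f$ gives $\mathbf u$ bounded in $H^2$ (hence compact only in $H^s$, $s<2$), and no uniform $H^3$-type bound on $\mathbf u$ is proved anywhere; likewise compactness of $\{\varrho(t)\}$ in $L^2$ would require a uniform $\norm{\nabla\varrho}_{L^2}$ bound, which the paper obtains \emph{only} in the case $(\delta,\nabla f)\equiv(\delta_0,(0,g))$ (Lemma \ref{lemma-nablau_t-l3-linear-rho}), via a cancellation identity $\int_\Omega(\omega\,\partial_{x_1}\varrho-\nabla u_2\cdot\nabla\varrho)\,d\mathbf x=0$ that is specific to the free boundary condition \eqref{main-perturbation-free}. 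Consequently your conclusions ``$\mathbf u\to0$ strongly in $H^2$'' and ``$\norm{\Delta\mathbf u}_{L^2}\to0$'' \emph{in the general setting} are unproven --- they are strictly stronger than what the paper itself establishes (even the constant-coefficient case gets $\norm{\Delta\mathbf u}_{L^2}\to0$ only through Lemma \ref{lemma-nablau_t-l3-linear-rho}) --- and it is precisely this unproven strong convergence that your final pressure step $\nabla P+\varrho(0,g)=\nu\Delta\mathbf u-\rho_s\mathbf u_t$ consumes. The same precompactness claim is also doing the work in your upgrade to $\norm{\mathbf u_t}_{H^1}\to0$, which therefore inherits the gap.

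The paper's route avoids invariance principles entirely and is quantitative at every stage: \eqref{theorem-2-conc-2-linear} follows from $\int_0^\infty\norm{\nabla\mathbf u_t}_{L^2}^2\,dt<\infty$ (which your $E_1$ also yields) together with uniform continuity of $t\mapsto\norm{\nabla\mathbf u_t}_{L^2}^2$, extracted from the $\mathbf u_{tt}$-energy inequality \eqref{nablau-l2-linear-tt} of Lemma \ref{lemma-nablau_t-l3-linear-t}, and then Barb\u{a}lat's lemma --- a step your proposal skips. For \eqref{theorem-2-conc-3-linear}, the missing ingredient you need is the coupled vorticity/density-gradient estimate: in the constant case $(\omega,\nabla\varrho)$ solves \eqref{linear-nabla-two}, the cross terms cancel by the identity above, and one gets $\nabla\varrho\in L^\infty\left((0,\infty);L^2(\Omega)\right)$; feeding this and $\norm{\mathbf u_t}_{H^1}$ into the vorticity equation gives $\norm{\mathbf u}_{H^2}\leq C\left(\norm{\mathbf u_t}_{H^1}+\norm{\nabla\varrho}_{L^2}\right)\norm{\mathbf u}_{H^1}+C\norm{\mathbf u}_{H^1}^2\to0$, after which your last display closes the argument verbatim. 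So the repair is local: replace the LaSalle/precompactness step by these two estimates (and state explicitly the normalization $\int_\Omega\rho_s u_{1,0}\,d\mathbf x=0$ that both your proof and, implicitly, the theorem require in the constant-gravity case); as written, the proposal does not constitute a proof of \eqref{theorem-2-conc-2-linear} or \eqref{theorem-2-conc-3-linear}.
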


\subsection{Key Ideas of the Proof}
The key step in proving \autoref{theorem-1} and \autoref{theorem-2} is to establish the uniform boundedness of $\norm{\mathbf{u}}_{L^2}^2$. This serves as a fundamental prerequisite for analyzing the asymptotic behavior not only of $|\nabla \mathbf{u}|_{L^2}^2$, but also of $\norm{\mathbf{u}_t}_{L^2}^2$, $\norm{\Delta\mathbf{u}}_{L^2}^2$, and $\nabla P+\varrho \nabla f$. The key step in establishing the uniform boundedness of $\norm{\mathbf{u}}_{L^2}^2$ is based on the inequality
\begin{align*}
\begin{aligned}
&\frac{\nu}{2} \frac{d}{dt} \int_\Omega |\nabla \mathbf{u}|^2 dx + \frac{1}{2}  \|\sqrt{\varrho+\rho_s}\mathbf{u}_t\|_{L^2}^2 \leq C\norm{\sqrt{\varrho+\rho_s}\mathbf{u}}_{L^4}^2
\norm{\nabla \mathbf{u}}_{L^4}^2-\int_{\Omega}\varrho \mathbf{u}_{t}\cdot \nabla f\,dx.
\end{aligned}
\end{align*}

To handle the term $\norm{\sqrt{\varrho+\rho_s}\mathbf{u}}_{L^4}^2$ on the right-hand side, we employ the refined estimate $
\norm{\sqrt{\varrho+\rho_s}\mathbf{u}}_{L^4}^2\leq 
C\left(1+\norm{\sqrt{\varrho+\rho_s}\mathbf{u}}_{L^2}
\right))\norm{\mathbf{u}}_{H^1}
\sqrt{\ln \left(2+\norm{\mathbf{u}}_{H^1}^2\right)}$
which improves upon the conventional bound $\norm{\sqrt{\varrho+\rho_s}\mathbf{u}}_{L^4}^2 \leq \norm{\nabla \mathbf{u}}_{L^2}^2$ used in previous works. 
Moreover, instead of applying Hölder's inequality to the second term as in earlier studies \cite{antontsev1973, kazhikov1974, simon1978, simon1990, Cara1992, Lions1996, ladyzhenskaya1978, okamoto1984, padula1982, padula1990, Choe2003, Kim2006, Danchin2003, Danchin2004, Yu2018}, which yields
\[
\abs{-\int_{\Omega}\varrho \mathbf{u}_{t}\cdot \nabla f\,dx}\leq 
 \epsilon \|\sqrt{\varrho+\rho_s}\mathbf{u}_t\|_{L^2}^2+C
 \|\varrho \nabla f\|_{L^2}^2,
\]
we utilize the continuity equation $\eqref{main-peturbation}_2$ to rewrite the term as
\[
-\int_{\Omega}\varrho \mathbf{u}_{t}\cdot \nabla f\,dx=-\partial_t
\int_{\Omega}\varrho \mathbf{u}\cdot \nabla f\,dx
+\int_{\Omega}\left(\varrho+\rho_s\right)\mathbf{u}\cdot
\nabla \left( \mathbf{u}\cdot \nabla f\right)\,d\mathbf{x}.
\]
This reformulation leads to a key inequality
\[
\begin{aligned}
&\partial_t
\int_{\Omega}\left(\varrho+\rho_s\right) \mathbf{u}\cdot \nabla f\,dx+
\nu\frac{1}{2} \frac{d}{dt} \int_\Omega |\nabla \mathbf{u}|^2 dx +  \|\sqrt{\varrho+\rho_s}\mathbf{u}_t\|_{L^2}^2 \\& \le 
C_0\norm{\nabla \mathbf{u}}_{L^2}^2
\left(2+C_0+\norm{\nabla \mathbf{u}}_{L^2}^2\right)
\ln \left(2+C_0+\norm{\nabla \mathbf{u}}_{L^2}^2\right)
\\&\quad+C_0\norm{\nabla \mathbf{u}}_{L^2}^3
\sqrt{\left(2+C_0+\norm{\nabla \mathbf{u}}_{L^2}^2\right)
\ln \left(2+C_0+\norm{\nabla \mathbf{u}}_{L^2}^2\right)}
\\&\quad+C_0
\norm{\nabla \mathbf{u}}_{L^2}^2
+C_0
\norm{\nabla \mathbf{u}}_{L^2}^{3}
+C_0\norm{\nabla \mathbf{u}}_{L^2}^{2}.
\end{aligned}
\]
This, together with  Lemma \ref{one-inequality}, yields
the uniform boundedness of $\norm{\mathbf{u}}_{L^2}^2$.

To determine the asymptotic state of $\varrho$ in the perturbation system \eqref{main-peturbation}
given in \autoref{theorem-3}, we establish two key identities: \eqref{estimates-2} and the energy relation $
E_{\gamma}(t)+\gamma\nu \int_0^t\|\nabla \mathbf{u}(\tau)\|_{L^2}^2\,d\tau=
E_{\gamma}(0)$ where the energy functional is defined as$
E_{\gamma}(t) =   \frac{\gamma}{2} \int_\Omega \left(\varrho+\rho_s\right) |\mathbf{u}|^2 d\mathbf{x} + \frac{1}{2} \int_\Omega \left(\varrho+\rho_s\right) ^2 d\mathbf{x} $ and $\gamma>0$.
These two identities enable us to derive a necessary and sufficient condition on the initial configuration $\varrho_0$ for the convergence of $\varrho+\rho_s$ to the profile $-\gamma f+\beta$, where $\gamma,\beta >0$.

\section{Estimates on General Domains}

In the subsequent analysis, \( C \) and \( C_0 \) denote generic positive constants that may vary from line to line. Here, \( C \) is independent of the initial data \( (\mathbf{u}_0, \varrho_0) \), while \( C_0 \) may depend on it.
We also use \( C_T\) to denote generic positive constants depending on $T$
that may vary from line to line.

\subsection{Estimates for $\varrho+\rho_s$}
\label{Section-3.2.1}
As a preliminary step, we define the particle path $X(\mathbf{x}, t)$ by the ordinary differential equation:
\begin{align}\label{particle-path}
\begin{cases}
\dfrac{d}{dt} X(\mathbf{x}, t) = \mathbf{v}(X(\mathbf{x}, t), t), \\
X(\mathbf{x}, 0) = \mathbf{x}.
\end{cases}
\end{align}

Along such a particle path, it follows from equation $(\ref{main-peturbation})_2$ that
\[
\frac{d}{dt} \left( \varrho(X(\mathbf{x}, t), t) + \rho_s(X(\mathbf{x}, t)) \right) = 0,
\]
for all $t > 0$ and $\mathbf{x} \in \Omega$. This implies the conservation of the total density along the trajectory:
\begin{equation}\label{density-bounds}
\alpha_1 \leq \varrho(X(\mathbf{x}, t), t) + \rho_s(X(\mathbf{x}, t)) = \varrho_0(\mathbf{x}) + \rho_s(\mathbf{x}) \leq \alpha_2 < \infty,
\end{equation}
where $0 < \alpha_1 \leq \alpha_2 < \infty$ are constants.

\begin{lemma}
Let $\Omega$ be a bounded domain in $\R^2$ with smooth boundary. Suppose that $0 < \varrho + \rho_s < \infty$ and $\mathbf{u} \in H^1_0(\Omega)$. Then, the following logarithmic interpolation inequality holds:
\begin{align}\label{ln-nablau}
\norm{\sqrt{\varrho + \rho_s} \, \mathbf{u}}_{L^4}^2 \leq 
C \left(1 + \norm{\sqrt{\varrho + \rho_s} \, \mathbf{u}}_{L^2} \right) \norm{\mathbf{u}}_{H^1}
\sqrt{\ln \left(2 + \norm{\mathbf{u}}_{H^1}^2\right)}.
\end{align}
\end{lemma}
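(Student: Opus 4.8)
The plan is to reduce the weighted estimate to a standard unweighted two-dimensional interpolation inequality by exploiting the uniform two-sided density bound, and then to relax the resulting clean bound into the stated logarithmic form. The crucial structural input is the pointwise bound \eqref{density-bounds}, which gives $\sqrt{\alpha_1}\le\sqrt{\varrho+\rho_s}\le\sqrt{\alpha_2}$ with $\alpha_1,\alpha_2$ independent of $t$ and $\mathbf{x}$. Since the transported density inherits only the regularity of the data and need not be differentiable, one cannot afford to move $\sqrt{\varrho+\rho_s}$ inside a gradient; the two-sided bound circumvents precisely this difficulty, allowing $\norm{\sqrt{\varrho+\rho_s}\,\mathbf{u}}_{L^4}^2\le\alpha_2\norm{\mathbf{u}}_{L^4}^2$ and, in the reverse direction, $\norm{\mathbf{u}}_{L^2}\le\alpha_1^{-1/2}\norm{\sqrt{\varrho+\rho_s}\,\mathbf{u}}_{L^2}$.

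First I would record the sharp two-dimensional Ladyzhenskaya--Gagliardo--Nirenberg inequality $\norm{\mathbf{u}}_{L^4}^2\le C\norm{\mathbf{u}}_{L^2}\norm{\mathbf{u}}_{H^1}$ on the bounded domain $\Omega\subset\R^2$. Combining it with the weight bounds above yields the clean estimate
\[
\norm{\sqrt{\varrho+\rho_s}\,\mathbf{u}}_{L^4}^2\le C\,\norm{\sqrt{\varrho+\rho_s}\,\mathbf{u}}_{L^2}\,\norm{\mathbf{u}}_{H^1}.
\]
The essential gain here is that the factor $\norm{\sqrt{\varrho+\rho_s}\,\mathbf{u}}_{L^2}$ is retained as an independent quantity, controlled by the energy, rather than being converted into $\norm{\nabla\mathbf{u}}_{L^2}$ via Poincaré, which is what produces the weaker conventional bound $\norm{\sqrt{\varrho+\rho_s}\,\mathbf{u}}_{L^4}^2\le C\norm{\nabla\mathbf{u}}_{L^2}^2$. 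To pass to the precise form \eqref{ln-nablau}, I would relax the clean estimate using $\norm{\sqrt{\varrho+\rho_s}\,\mathbf{u}}_{L^2}\le 1+\norm{\sqrt{\varrho+\rho_s}\,\mathbf{u}}_{L^2}$ together with $\sqrt{\ln(2+\norm{\mathbf{u}}_{H^1}^2)}\ge\sqrt{\ln 2}$, absorbing the constant $(\ln 2)^{-1/2}$ into $C$; this reproduces exactly the stated right-hand side.

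If instead one wishes to exhibit the logarithm intrinsically, rather than inserting it by hand, I would interpolate $L^4$ between $L^2$ and $L^q$, invoke the Moser--Trudinger-type growth $\norm{\mathbf{u}}_{L^q(\Omega)}\le C\sqrt{q}\,\norm{\mathbf{u}}_{H^1}$ of the two-dimensional Sobolev constant, and then optimize the free exponent $q$; the choice $q\sim\ln(2+\norm{\mathbf{u}}_{H^1}^2)$ turns the $\sqrt{q}$ growth into a factor of the form $\sqrt{\ln(\,\cdot\,)}$. I expect the only genuine subtlety to be conceptual rather than computational: recognizing that the low regularity of the transported density forbids the naive route through $\nabla(\sqrt{\varrho+\rho_s}\,\mathbf{u})$, and that the uniform two-sided bound \eqref{density-bounds} is precisely what makes the weight harmless. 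Matching the $L^q$-route optimization to the exact argument $2+\norm{\mathbf{u}}_{H^1}^2$ may require a short case distinction according to whether $\norm{\mathbf{u}}_{H^1}$ is large or small, but this is routine, and the reduction-plus-relaxation argument avoids it entirely.
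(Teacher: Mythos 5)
Your proposal is correct, and your primary argument takes a genuinely different route from the paper's. The paper in fact offers no self-contained proof: it cites Desjardins \cite{Desjardins1997}, whose argument uses only the \emph{upper} bound on the density (it is valid for $0\le \varrho+\rho_s\le \alpha_2$, vacuum allowed) and generates the logarithm intrinsically — essentially your fallback sketch: a H\"older split with a free exponent that keeps the weight on the $L^2$ factor (write $(\varrho+\rho_s)^2\abs{\mathbf{u}}^4=\left((\varrho+\rho_s)\abs{\mathbf{u}}^2\right)^{\theta}(\varrho+\rho_s)^{2-\theta}\abs{\mathbf{u}}^{4-2\theta}$, so that $\norm{\sqrt{\varrho+\rho_s}\,\mathbf{u}}_{L^2}$ rather than $\norm{\mathbf{u}}_{L^2}$ appears), the two-dimensional growth $\norm{\mathbf{u}}_{L^q}\le C\sqrt{q}\,\norm{\mathbf{u}}_{H^1}$, and the choice $q\sim\ln\left(2+\norm{\mathbf{u}}_{H^1}^2\right)$. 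Your main route instead exploits the uniform lower bound $\alpha_1>0$ from \eqref{density-bounds} to reduce to Ladyzhenskaya's inequality (the paper's own \eqref{lem:sobolev-3}) and then pads with $\norm{\sqrt{\varrho+\rho_s}\,\mathbf{u}}_{L^2}\le 1+\norm{\sqrt{\varrho+\rho_s}\,\mathbf{u}}_{L^2}$ and $\sqrt{\ln 2}\le\sqrt{\ln\left(2+\norm{\mathbf{u}}_{H^1}^2\right)}$; this is a few lines long and even yields the stronger, log-free bound $\norm{\sqrt{\varrho+\rho_s}\,\mathbf{u}}_{L^4}^2\le C\norm{\sqrt{\varrho+\rho_s}\,\mathbf{u}}_{L^2}\norm{\mathbf{u}}_{H^1}$. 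What it buys is brevity; what it loses is generality: your constant scales like $\alpha_2\alpha_1^{-1/2}$, so the argument degenerates as the lower density bound tends to zero — precisely the near-vacuum regime that the logarithmic form is designed to survive, and the reason the lemma's hypothesis is stated merely as $0<\varrho+\rho_s<\infty$ rather than as a two-sided uniform bound. Since Theorem \ref{theorem-1} assumes $0<\alpha_1\le\varrho_0+\rho_s\le\alpha_2$ and the transport equation preserves this along particle paths, your proof does cover every application of the lemma made in this paper; just note that it produces a data-dependent constant (a $C_0$ in the paper's notational conventions, not a $C$) and would not substitute for Desjardins's lemma in any setting permitting vacuum.
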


The proof of this inequality, which extends the result known for the two-dimensional torus, can be found in \cite{Desjardins1997}. For the case of a bounded domain $\Omega$ in $\R^2$ with smooth boundary, the result can be established by adapting the proof in a similar manner.

\subsection{Estimates for
$\norm{\sqrt{\varrho+\rho_s}\mathbf{u}}_{L^{\infty}\left((0,\infty);L^2(\Omega)\right)}$ and $\|\nabla \mathbf{u}\|_{
L^{2}\left((0,\infty);L^2(\Omega)\right)}$}

\begin{lemma}\label{lemma-u-l2}
Let $(\mathbf{u}, \varrho)$ be a solution of the problem \eqref{main-peturbation} subject to the condition \eqref{main-perturbation-2}, with initial data $(\mathbf{u}_0, \varrho_0) \in H^2(\Omega) \times L^{\infty}(\Omega)$. If $f \in L^{\infty}(\Omega)$, then the following uniform estimates hold:
\begin{align}\label{estimates-1}
\begin{aligned}
&\sqrt{\varrho + \rho_s} \, \mathbf{u} \in L^{\infty}((0,\infty); L^2(\Omega)), \\
&\nabla \mathbf{u} \in L^{2}((0,\infty); L^2(\Omega)), \\
&(\varrho + \rho_s) f \in L^{\infty}((0,\infty); L^1(\Omega)).
\end{aligned}
\end{align}
Moreover, the solution satisfies the following energy identity for all $t > 0$:
\begin{align}\label{estimates-2}
\begin{aligned}
&\norm{\sqrt{\varrho + \rho_s} \, \mathbf{u}(t)}_{L^2}^2 + 2 
\int_\Omega \varrho f d\mathbf{x} + 2\nu \int_0^t \|\nabla \mathbf{u}(\tau)\|_{L^2}^2  d\tau 
= \norm{\sqrt{\varrho_0 + \rho_s} \, \mathbf{u}_0}_{L^2}^2 + 2 \int_\Omega \varrho_0 f d\mathbf{x}.
\end{aligned}
\end{align}
\end{lemma}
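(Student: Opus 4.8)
The plan is to derive the energy identity \eqref{estimates-2} directly, since the three uniform bounds in \eqref{estimates-1} follow from it once the left-hand side is rearranged into a sum of nonnegative terms. I would test the momentum equation $\eqref{main-peturbation}_1$ against $\mathbf{u}$, integrate over $\Omega$, and treat the four resulting terms in turn. The first observation is that the total density $\varrho+\rho_s$ satisfies the pure transport law $\partial_t(\varrho+\rho_s)+\mathbf{u}\cdot\nabla(\varrho+\rho_s)=0$, which follows from $\eqref{main-peturbation}_2$ together with the stationarity of $\rho_s$.

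For the inertial terms I would write $\int_\Omega(\varrho+\rho_s)\partial_t\mathbf{u}\cdot\mathbf{u}\,d\mathbf{x}=\tfrac12\frac{d}{dt}\int_\Omega(\varrho+\rho_s)|\mathbf{u}|^2\,d\mathbf{x}-\tfrac12\int_\Omega\partial_t(\varrho+\rho_s)|\mathbf{u}|^2\,d\mathbf{x}$ and substitute the transport law; the correction term then combines with the convective term $\int_\Omega(\varrho+\rho_s)(\mathbf{u}\cdot\nabla)\mathbf{u}\cdot\mathbf{u}\,d\mathbf{x}$ into $\tfrac12\int_\Omega\mathbf{u}\cdot\nabla\bigl[(\varrho+\rho_s)|\mathbf{u}|^2\bigr]\,d\mathbf{x}$, which vanishes after integration by parts using $\nabla\cdot\mathbf{u}=0$ and $\mathbf{u}|_{\partial\Omega}=\mathbf{0}$. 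Thus the two inertial terms collapse to $\tfrac12\frac{d}{dt}\norm{\sqrt{\varrho+\rho_s}\,\mathbf{u}}_{L^2}^2$. The viscous term yields $-\nu\norm{\nabla\mathbf{u}}_{L^2}^2$ and the pressure term vanishes, both by integration by parts and the boundary/incompressibility conditions, exactly as in the proof of Lemma \ref{Hydrostatic-equilibrium}.

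The decisive step is the gravitational term $-\int_\Omega\varrho\nabla f\cdot\mathbf{u}\,d\mathbf{x}$, and here I would exploit the hydrostatic balance $\nabla p_s=-\rho_s\nabla f$ from Lemma \ref{Hydrostatic-equilibrium}. Since $\int_\Omega\rho_s\nabla f\cdot\mathbf{u}\,d\mathbf{x}=-\int_\Omega\nabla p_s\cdot\mathbf{u}\,d\mathbf{x}=\int_\Omega p_s\,\nabla\cdot\mathbf{u}\,d\mathbf{x}=0$, I may replace $\varrho$ by the full density $\varrho+\rho_s$; then, using the transport law once more, $\int_\Omega(\varrho+\rho_s)\mathbf{u}\cdot\nabla f\,d\mathbf{x}=-\int_\Omega\nabla\cdot\bigl((\varrho+\rho_s)\mathbf{u}\bigr)f\,d\mathbf{x}=\int_\Omega\partial_t\varrho\,f\,d\mathbf{x}=\frac{d}{dt}\int_\Omega\varrho f\,d\mathbf{x}$, so the gravitational term equals the exact derivative $-\frac{d}{dt}\int_\Omega\varrho f\,d\mathbf{x}$. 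This is the heart of the argument: rather than bounding it by $\epsilon\norm{\sqrt{\varrho+\rho_s}\mathbf{u}_t}_{L^2}^2+C\norm{\varrho\nabla f}_{L^2}^2$, which introduces a forcing that is not time-integrable (as explained in the Introduction), the reformulation produces the conservative structure $\tfrac12\frac{d}{dt}\norm{\sqrt{\varrho+\rho_s}\mathbf{u}}_{L^2}^2+\frac{d}{dt}\int_\Omega\varrho f\,d\mathbf{x}+\nu\norm{\nabla\mathbf{u}}_{L^2}^2=0$. Integrating in $t$ and multiplying by $2$ gives \eqref{estimates-2}.

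Finally, to extract \eqref{estimates-1} I would add the time-independent constant $2\int_\Omega\rho_s f\,d\mathbf{x}$ to both sides of \eqref{estimates-2}, which converts the $\varrho$ in the potential-energy terms into the full density $\varrho+\rho_s$. The resulting identity reads $\norm{\sqrt{\varrho+\rho_s}\mathbf{u}(t)}_{L^2}^2+2\int_\Omega(\varrho+\rho_s)f\,d\mathbf{x}+2\nu\int_0^t\norm{\nabla\mathbf{u}(\tau)}_{L^2}^2\,d\tau=\text{const}$, where the constant depends only on the initial data. Because $\varrho+\rho_s\ge\alpha_1>0$ and $f>0$, the integrand $(\varrho+\rho_s)f$ is pointwise nonnegative, so all three terms on the left are nonnegative and each is bounded by the constant right-hand side uniformly in $t$; this yields simultaneously $\sqrt{\varrho+\rho_s}\,\mathbf{u}\in L^\infty((0,\infty);L^2(\Omega))$, $(\varrho+\rho_s)f\in L^\infty((0,\infty);L^1(\Omega))$, and $\nabla\mathbf{u}\in L^2((0,\infty);L^2(\Omega))$. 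I expect the main obstacle to be precisely the gravitational term: once it is recast as an exact time derivative and the positivity of the total potential energy is used, the estimates follow with no smallness assumption on the data.
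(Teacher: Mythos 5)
Your proposal is correct and follows essentially the same route as the paper: both derive the dissipation identity $\frac{d}{dt}\bigl(\tfrac12\norm{\sqrt{\varrho+\rho_s}\,\mathbf{u}}_{L^2}^2+\int_\Omega\varrho f\,d\mathbf{x}\bigr)=-\nu\norm{\nabla\mathbf{u}}_{L^2}^2$ by testing the momentum equation with $\mathbf{u}$, cancelling the inertial terms via the transport of $\varrho+\rho_s$, and eliminating the $\rho_s\nabla f\cdot\mathbf{u}$ contribution through the hydrostatic balance $\nabla p_s=-\rho_s\nabla f$ so that the gravitational term becomes the exact derivative $-\frac{d}{dt}\int_\Omega\varrho f\,d\mathbf{x}$. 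Your final step of adding the constant $2\int_\Omega\rho_s f\,d\mathbf{x}$ and using $f>0$, $\varrho+\rho_s\geq\alpha_1>0$ to make all terms nonnegative is a slightly more explicit extraction of \eqref{estimates-1} than the paper provides, but the argument is the same.
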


\begin{proof}
We begin with the total mechanical energy:
\[
E(t) =  \frac{1}{2} \int_\Omega \left(\varrho+\rho_s\right) |\mathbf{u}|^2 d\mathbf{x} 
+ \int_\Omega \varrho f d\mathbf{x}.
\]
Differentiating $E(t)$ with respect to time, we have
\begin{align}\label{engergy-time-derivative}
\frac{dE}{dt} = \int_\Omega \left(\varrho+\rho_s\right) \mathbf{u}_t \cdot \mathbf{u} d\mathbf{x} + \frac{1}{2} \int_\Omega \rho_t |\mathbf{u}|^2 d\mathbf{x}+ \int_\Omega \varrho_t f d\mathbf{x}.
\end{align}
From the continuity equation $\varrho_t = -(\mathbf{u}\cdot  \nabla)\varrho -(\mathbf{u}\cdot  \nabla)\rho_s$, the second term becomes:
\[
\frac{1}{2} \int_\Omega \varrho_t |\mathbf{u}|^2 d\mathbf{x} = -\frac{1}{2} \int_\Omega (\mathbf{u} \cdot \nabla \left(\varrho+\rho_s\right)) |\mathbf{u}|^2 d\mathbf{x}.
\]
Using the momentum equation,  the first term of \eqref{engergy-time-derivative} becomes
\[
\begin{aligned}
\int_\Omega \left(\varrho+\rho_s\right) \mathbf{u}_t \cdot \mathbf{u} d\mathbf{x} &= -\int_\Omega \left(\varrho+\rho_s\right) (\mathbf{u} \cdot \nabla \mathbf{u}) \cdot \mathbf{u} d\mathbf{x} - \int_\Omega \nabla P \cdot \mathbf{u} d\mathbf{x} \\
&\quad + \nu \int_\Omega \Delta \mathbf{u} \cdot \mathbf{u} d\mathbf{x} - \int_\Omega \varrho \nabla f \cdot \mathbf{u} d\mathbf{x}\\
&=-\int_\Omega \left(\varrho+\rho_s\right) (\mathbf{u} \cdot \nabla \mathbf{u}) \cdot \mathbf{u} d\mathbf{x}  \\
&\quad -\nu \|\nabla \mathbf{u}\|_{L^2}^2.- \int_\Omega \varrho \nabla f \cdot \mathbf{u} d\mathbf{x}.
\end{aligned}
\]
For the convection term in the preceding equation, we have
\[
\begin{aligned}
\int_\Omega\left(\varrho+\rho_s\right) (\mathbf{u} \cdot \nabla \mathbf{u}) \cdot \mathbf{u} d\mathbf{x} &= \int_\Omega \left(\varrho+\rho_s\right) \mathbf{u} \cdot \nabla \left( \frac{1}{2} |\mathbf{u}|^2 \right) d\mathbf{x} \\
&= -\int_\Omega \nabla \cdot (\left(\varrho+\rho_s\right) \mathbf{u}) \cdot \frac{1}{2} |\mathbf{u}|^2 d\mathbf{x} \\
&= -\frac{1}{2} \int_\Omega (\mathbf{u} \cdot \nabla \left(\varrho+\rho_s\right)) |\mathbf{u}|^2 d\mathbf{x},
\end{aligned}
\]
where we have used $\nabla \cdot (\left(\varrho+\rho_s\right) \mathbf{u}) = \mathbf{u} \cdot \nabla \left(\varrho+\rho_s\right)$ due to incompressibility. 

And, for the external force term, it gives
\[
\begin{aligned}
-\int_\Omega \varrho \nabla f \cdot \mathbf{u} d\mathbf{x} &= \int_\Omega f \nabla \cdot (\varrho \mathbf{u}) d\mathbf{x} - \int_{\partial\Omega} \varrho f \mathbf{u} \cdot \mathbf{n} dS \\
&= \int_\Omega f \nabla \cdot (\varrho \mathbf{u}) d\mathbf{x} = -\int_\Omega f \varrho_t dx-\int_\Omega f \nabla \cdot (\rho_s \mathbf{u}) d\mathbf{x}.
\end{aligned}
\]

Substituting all terms back into \eqref{engergy-time-derivative}, we have
\begin{align}\label{energy-two}
\begin{aligned}
\frac{dE}{dt} &= -\frac{1}{2} \int_\Omega (\mathbf{u} \cdot \nabla \left(\varrho+\rho_s\right)) |\mathbf{u}|^2 d\mathbf{x} + \left[ \frac{1}{2} \int_\Omega (\mathbf{u} \cdot \nabla \left(\varrho+\rho_s\right)) |\mathbf{u}|^2 d\mathbf{x}\right] \\
&\quad  - \nu \|\nabla \mathbf{u}\|_{L^2}^2 - \int_\Omega f \varrho_t d\mathbf{x}
-\int_\Omega f \nabla \cdot (\rho_s \mathbf{u}) d\mathbf{x}
 + \int_\Omega\varrho_t f d\mathbf{x} = -\nu \|\nabla \mathbf{u}\|_{L^2}^2,
\end{aligned}
\end{align}
where we have used
\[
\int_\Omega f \nabla \cdot (\rho_s \mathbf{u}) d\mathbf{x}
=-\int_\Omega \rho_s \nabla f \cdot \mathbf{u} d\mathbf{x}=
\int_\Omega \nabla p_s \cdot \mathbf{u} d\mathbf{x}=0.
\]
Integrating \eqref{energy-two} from zero to $t$, we then get
\eqref{estimates-1}-\eqref{estimates-2}.
\end{proof}

\subsection{Estimates for $\|\nabla \mathbf{u}\|_{
L^{\infty}\left((0,\infty);L^2(\Omega)\right)}$ and
$\norm{\sqrt{\varrho+\rho_s}\mathbf{u}_t}_{L^{2}\left((0,\infty);L^2(\Omega)\right)}$ }

\begin{lemma}\label{lemma-nablau-l2}
Let $(\mathbf{u}, \varrho)$ be a solution of the problem \eqref{main-peturbation} subject to the condition \eqref{main-perturbation-2}, with initial data $(\mathbf{u}_0, \varrho_0) \in H^1(\Omega) \times L^{\infty}(\Omega)$. If $\nabla f \in W^{1,\infty}(\Omega)$, then the following uniform estimates hold:
\begin{align}\label{estimates-1-nablau}
\begin{aligned}
&\sqrt{\varrho+\rho_s}\mathbf{u}_t\in L^{2}\left((0,\infty);L^2(\Omega)\right),\\
&\nabla \mathbf{u} \in L^{\infty}\left((0,\infty);L^2(\Omega)\right),\\
&\left(\varrho+\rho_s\right)\mathbf{u}\cdot \nabla f\in L^{\infty}\left((0,\infty);L^1(\Omega)\right).
\end{aligned}
\end{align}
Furthermore, the solution satisfies the following estimate:
\begin{align}\label{estimates-2-nablau}
\int_\Omega |\nabla \mathbf{u}(t)|^2  dx + 
\frac{2}{\nu} \int_0^t \norm{\sqrt{\varrho + \rho_s} \mathbf{u}_{\tau}}_{L^2}^2 d\tau 
\leq G^{-1}\left( \int_0^t \norm{\nabla \mathbf{u}}_{L^2}^2 ds \right),
\end{align}
where $G: [C_0, \infty) \to \mathbb{R}$ is defined by $
G(z) = \frac{2}{\nu} \int_{C_0}^z \frac{ds}{w(s)} ds$,
and the function $w(z)$ is given by
\[
\begin{aligned}
w(z) = &\ (2 + C_0 + z) \ln(2 + C_0 + z) + C_0\sqrt{z} + C_0 \\
&+ \sqrt{2 + C_0 + z} \cdot \sqrt{(2 + C_0 + z) \ln(2 + C_0 + z)}.
\end{aligned}
\]
\end{lemma}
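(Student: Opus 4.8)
The plan is to run a weighted higher-order energy estimate by testing the momentum equation $\eqref{main-peturbation}_1$ with $\mathbf{u}_t$. Since $\mathbf{u}|_{\partial\Omega}=\mathbf{0}$ for all $t$ forces $\mathbf{u}_t|_{\partial\Omega}=\mathbf{0}$ and $\nabla\cdot\mathbf{u}_t=0$, integration by parts kills the pressure term and turns the viscous term into $-\tfrac{\nu}{2}\tfrac{d}{dt}\norm{\nabla\mathbf{u}}_{L^2}^2$, yielding
\[
\norm{\sqrt{\varrho+\rho_s}\,\mathbf{u}_t}_{L^2}^2+\frac{\nu}{2}\frac{d}{dt}\norm{\nabla\mathbf{u}}_{L^2}^2
=-\int_\Omega(\varrho+\rho_s)(\mathbf{u}\cdot\nabla)\mathbf{u}\cdot\mathbf{u}_t\,d\mathbf{x}-\int_\Omega\varrho\nabla f\cdot\mathbf{u}_t\,d\mathbf{x}.
\]
The entire difficulty is now concentrated in the two terms on the right, and the goal is to bound them by an expression of the form $z\,w(z)$ with $z=\norm{\nabla\mathbf{u}}_{L^2}^2$, plus a perfect time derivative, while keeping a fixed fraction of $\norm{\sqrt{\varrho+\rho_s}\,\mathbf{u}_t}_{L^2}^2$ on the left.

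For the convective term I would use Cauchy--Schwarz and Young to split off $\tfrac12\norm{\sqrt{\varrho+\rho_s}\,\mathbf{u}_t}_{L^2}^2+\tfrac12\norm{\sqrt{\varrho+\rho_s}(\mathbf{u}\cdot\nabla)\mathbf{u}}_{L^2}^2$, absorb the first piece on the left, and then estimate $\norm{\sqrt{\varrho+\rho_s}(\mathbf{u}\cdot\nabla)\mathbf{u}}_{L^2}^2\le C\norm{\sqrt{\varrho+\rho_s}\,\mathbf{u}}_{L^4}^2\norm{\nabla\mathbf{u}}_{L^4}^2$, using the uniform density bounds \eqref{density-bounds} to trade $(\varrho+\rho_s)^2$ for $(\varrho+\rho_s)$. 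The factor $\norm{\sqrt{\varrho+\rho_s}\,\mathbf{u}}_{L^4}^2$ is controlled by the logarithmic inequality \eqref{ln-nablau} together with the uniform bound on $\norm{\sqrt{\varrho+\rho_s}\,\mathbf{u}}_{L^2}$ and Poincar\'e's inequality $\norm{\mathbf{u}}_{H^1}\le C\norm{\nabla\mathbf{u}}_{L^2}$ coming from \autoref{lemma-u-l2}, producing the crucial $\sqrt{(2+C_0+z)\ln(2+C_0+z)}$-type growth. The factor $\norm{\nabla\mathbf{u}}_{L^4}^2$ is handled in 2D by Gagliardo--Nirenberg, $\norm{\nabla\mathbf{u}}_{L^4}^2\le C\norm{\nabla\mathbf{u}}_{L^2}\norm{\mathbf{u}}_{H^2}$, and $\norm{\mathbf{u}}_{H^2}$ is estimated by viewing $\eqref{main-peturbation}_1$ as a stationary Stokes system $-\nu\Delta\mathbf{u}+\nabla P=\mathbf{F}$ with $\mathbf{F}=-(\varrho+\rho_s)(\mathbf{u}_t+(\mathbf{u}\cdot\nabla)\mathbf{u})-\varrho\nabla f$, whence $\norm{\mathbf{u}}_{H^2}+\norm{\nabla P}_{L^2}\le C\norm{\mathbf{F}}_{L^2}$ by elliptic regularity. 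This creates a self-reference, since $\norm{\mathbf{F}}_{L^2}$ again contains the convective norm; I would break it by Young's inequality, absorbing a small multiple of $\norm{\mathbf{u}}_{H^2}$ to get $\norm{\mathbf{u}}_{H^2}\le C\bigl(\norm{\sqrt{\varrho+\rho_s}\,\mathbf{u}_t}_{L^2}+C_0+C_0\,z\sqrt{\ln(2+C_0+z)}\bigr)$, followed by a further Young step that removes the residual $\norm{\sqrt{\varrho+\rho_s}\,\mathbf{u}_t}_{L^2}$ against the dissipation on the left.

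For the force term I would avoid the crude H\"older bound and instead use the continuity equation $\eqref{main-peturbation}_2$ to write $-\int_\Omega\varrho\mathbf{u}_t\cdot\nabla f=-\partial_t\int_\Omega\varrho\mathbf{u}\cdot\nabla f+\int_\Omega(\varrho+\rho_s)\mathbf{u}\cdot\nabla(\mathbf{u}\cdot\nabla f)$, moving the exact time derivative to the left. Using $\rho_s\nabla f=-\nabla p_s$ and $\nabla\cdot\mathbf{u}=0$ gives $\int_\Omega\rho_s\mathbf{u}\cdot\nabla f=0$, so the transported quantity is in fact $\int_\Omega(\varrho+\rho_s)\mathbf{u}\cdot\nabla f$; the remaining integral is bounded by $C_0\bigl(\norm{\mathbf{u}}_{L^2}\norm{\nabla\mathbf{u}}_{L^2}+\norm{\mathbf{u}}_{L^2}^2\bigr)\le C_0\,z$, using $\nabla f\in W^{1,\infty}$, the density bounds, and Poincar\'e. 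Collecting everything, and setting $\Phi(t)=\tfrac{\nu}{2}\norm{\nabla\mathbf{u}}_{L^2}^2+\int_\Omega(\varrho+\rho_s)\mathbf{u}\cdot\nabla f$ with $\abs{\int_\Omega(\varrho+\rho_s)\mathbf{u}\cdot\nabla f}\le C_0\norm{\nabla\mathbf{u}}_{L^2}$ (so $\Phi$ and $1+\norm{\nabla\mathbf{u}}_{L^2}^2$ are comparable), one arrives at
\[
\frac{d}{dt}\Phi(t)+c\,\norm{\sqrt{\varrho+\rho_s}\,\mathbf{u}_t}_{L^2}^2\le C\,\norm{\nabla\mathbf{u}}_{L^2}^2\,w\!\left(\norm{\nabla\mathbf{u}}_{L^2}^2\right).
\]

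Finally I would close by an ODE comparison. Setting $y(t)=\norm{\nabla\mathbf{u}(t)}_{L^2}^2+\tfrac{2}{\nu}\int_0^t\norm{\sqrt{\varrho+\rho_s}\,\mathbf{u}_\tau}_{L^2}^2\,d\tau$, the monotonicity of $w$ and the comparability of $\Phi$ with $\norm{\nabla\mathbf{u}}_{L^2}^2$ give $\tfrac{dy}{dt}\le C\norm{\nabla\mathbf{u}(t)}_{L^2}^2\,w(y)$; separating variables and integrating yields exactly \eqref{estimates-2-nablau}, namely $y(t)\le G^{-1}\bigl(\int_0^t\norm{\nabla\mathbf{u}}_{L^2}^2\,ds\bigr)$ with $G(z)=\tfrac{2}{\nu}\int_{C_0}^z ds/w(s)$. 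The decisive point is that, by \autoref{lemma-u-l2} and the energy identity \eqref{estimates-2}, $\int_0^\infty\norm{\nabla\mathbf{u}}_{L^2}^2\,ds<\infty$, while the $z\ln z$ growth of $w$ makes $\int^\infty ds/w(s)=\infty$, so $G$ maps $[C_0,\infty)$ onto $[0,\infty)$ and $G^{-1}$ is finite at every finite argument. Hence $y(t)\le G^{-1}\bigl(\int_0^\infty\norm{\nabla\mathbf{u}}_{L^2}^2\,ds\bigr)<\infty$ uniformly in $t$, which simultaneously delivers all three conclusions in \eqref{estimates-1-nablau}. I expect the main obstacle to be precisely this borderline calibration of $w$: the logarithmic interpolation \eqref{ln-nablau} is what keeps the nonlinearity at the integrable threshold $z\ln z$, whereas a naive Ladyzhenskaya bound would produce $z^2$ growth, for which $\int^\infty ds/w(s)<\infty$ and the comparison would yield only local-in-time control, losing the global uniform bound.
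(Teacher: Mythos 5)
Your proposal is correct and takes essentially the same route as the paper's proof of Lemma \ref{lemma-nablau-l2}: testing $\eqref{main-peturbation}_1$ with $\mathbf{u}_t$, controlling the convective term via the logarithmic interpolation \eqref{ln-nablau} together with a Stokes estimate for $\norm{\nabla^2\mathbf{u}}_{L^2}$ closed by self-absorption, rewriting $-\int_{\Omega}\varrho\,\mathbf{u}_t\cdot\nabla f\,d\mathbf{x}$ through the continuity equation as a perfect time derivative of $\int_{\Omega}(\varrho+\rho_s)\mathbf{u}\cdot\nabla f\,d\mathbf{x}$ (using $\int_{\Omega}\rho_s\mathbf{u}\cdot\nabla f\,d\mathbf{x}=0$) plus a quadratic remainder, and concluding with the Bihari-type Lemma \ref{one-inequality}, which works precisely because $\int_{C_0}^{\infty}ds/w(s)=+\infty$ at the $z\ln z$ threshold while $\int_0^{\infty}\norm{\nabla\mathbf{u}}_{L^2}^2\,dt<\infty$ by Lemma \ref{lemma-u-l2}. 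The only cosmetic difference is your explicit comparability remark for $\Phi$; the paper achieves the same effect by bounding the transported term with the uniform energy estimate when passing from \eqref{nablau-l2-detail} to \eqref{nablau-l2-detail-3}.
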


\begin{proof}
Testing $\eqref{main-peturbation}_1$ by $\mathbf{u}_t$ and integrating over $\Omega$ gives
\begin{align}\label{nablau-l2}
\begin{aligned}
&\nu\frac{1}{2} \frac{d}{dt} \int_\Omega |\nabla \mathbf{u}|^2 dx + \frac{1}{2}  \|\sqrt{\varrho+\rho_s}\mathbf{u}_t\|_{L^2}^2 \\
&\leq C\norm{\sqrt{\varrho+\rho_s}\mathbf{u}}_{L^4}^2
\norm{\nabla \mathbf{u}}_{L^4}^2-\int_{\Omega}\varrho \mathbf{u}_{t}\cdot \nabla f\,dx\\
&\leq C\norm{\sqrt{\varrho+\rho_s}\mathbf{u}}_{L^4}^2
\norm{\nabla \mathbf{u}}_{L^2}\norm{\nabla^2 \mathbf{u}}_{L^2}
\\&\quad+C\norm{\sqrt{\varrho+\rho_s}\mathbf{u}}_{L^4}^2
\norm{\nabla \mathbf{u}}_{L^2}^2
-\int_{\Omega}\varrho \mathbf{u}_{t}\cdot \nabla f\,dx\\
&:=I_1+I_2+I_3.
\end{aligned}
\end{align}

To estimate the term \( I_1 \), it is necessary to control the \( L^2 \)-norm of $\nabla^2 \mathbf{u}$. Note that
\[
\begin{aligned}
\norm{\nabla^2 \mathbf{u}}_{L^2}
&\leq C\norm{\sqrt{\varrho+\rho_s}\mathbf{u}_t}_{L^2}
\\&\quad+C\norm{\sqrt{\varrho+\rho_s}\mathbf{u}}_{L^4}
\norm{\nabla \mathbf{u}}_{L^4}
+C\norm{\varrho  \nabla f}_{L^2}\\
&\leq C\norm{\sqrt{\varrho+\rho_s}\mathbf{u}_t}_{L^2}
\\&\quad+C\norm{\sqrt{\varrho+\rho_s}\mathbf{u}}_{L^4}
\norm{\mathbf{\nabla u}}_{L^2}^{\frac{1}{2}}
\left(\norm{\nabla^2 \mathbf{u}}_{L^2}
^{\frac{1}{2}}
+\norm{\nabla\mathbf{u}}_{L^2}^{\frac{1}{2}}
\right)
+C\norm{\varrho  \nabla f}_{L^2}\\
&\leq C\norm{\sqrt{\varrho+\rho_s}\mathbf{u}_t}_{L^2}
+C\norm{\sqrt{\varrho+\rho_s}\mathbf{u}}_{L^4}^2
\norm{\nabla \mathbf{u}}_{L^2}+
1/2\norm{\nabla^2 \mathbf{u}}_{L^2}
\\&\quad+C\norm{\sqrt{\varrho+\rho_s}\mathbf{u}}_{L^4}
\norm{\nabla \mathbf{u}}_{L^2}
+\norm{\varrho  \nabla f}_{L^2}.
\end{aligned}
\]
This implies that 
\begin{align}\label{delta-u}
\begin{aligned}
\norm{\nabla^2 \mathbf{u}}_{L^2}
&\leq C\norm{\sqrt{\varrho+\rho_s}\mathbf{u}_t}_{L^2}
+C\norm{\sqrt{\varrho+\rho_s}\mathbf{u}}_{L^4}^2
\norm{\nabla \mathbf{u}}_{L^2}
\\&\quad+C\norm{\sqrt{\varrho+\rho_s}\mathbf{u}}_{L^4}
\norm{\nabla \mathbf{u}}_{L^2}
+\norm{\varrho  \nabla f}_{L^2}.
\end{aligned}
\end{align}
By combining the inequality \eqref{ln-nablau} with \eqref{estimates-2}, we deduce the following bound for \( I_2 \):\begin{align}\label{ln-nablau-2}
\norm{\sqrt{\varrho+\rho_s}\mathbf{u}}_{L^4}^2\norm{\nabla \mathbf{u}}_{L^2}^2\leq
C_0
\left(2+C_0+\norm{\nabla \mathbf{u}}_{L^2}\right)
\sqrt{\ln \left(2+C_0+\norm{\nabla \mathbf{u}}_{L^2}^2\right)}\norm{\nabla \mathbf{u}}_{L^2}^2.
\end{align}

To handle \( I_3 \), we simplify its expression via \(\eqref{main-peturbation}_2\), deviating from the direct use of Hölder's inequality employed in prior studies:
\begin{align}\label{I_3}
\begin{aligned}
-\int_{\Omega}\varrho \mathbf{u}_{t}\cdot \nabla f\,d\mathbf{x} 
&=-\partial_t
\int_{\Omega}\left(\varrho+\rho_s\right)\mathbf{u}\cdot \nabla f\,d\mathbf{x} 
+
\int_{\Omega}\varrho_t\mathbf{u}\cdot \nabla f\,d\mathbf{x} \\
&=-\partial_t
\int_{\Omega}\left(\varrho+\rho_s\right)\mathbf{u}\cdot \nabla f\,d\mathbf{x} 
\\&\quad-\int_{\Omega}\left(\mathbf{u}\cdot \nabla \left(\varrho+\rho_s\right)\right)
\left(
\ \mathbf{u}\cdot \nabla f\right)\,d\mathbf{x} \\
&=-\partial_t
\int_{\Omega}\left(\varrho+\rho_s\right)\mathbf{u}\cdot \nabla f\,d\mathbf{x} 
+\int_{\Omega}\left(\varrho+\rho_s\right)\mathbf{u}\cdot
\nabla\left(
\ \mathbf{u}\cdot \nabla f\right)\,d\mathbf{x} .
\end{aligned}
\end{align}

Substituting \eqref{delta-u}-\eqref{I_3} into \eqref{nablau-l2}, we have
\[
\begin{aligned}
&\partial_t
\int_{\Omega}\left(\varrho+\rho_s\right) \mathbf{u}\cdot \nabla f\,dx+
\nu\frac{1}{2} \frac{d}{dt} \int_\Omega |\nabla \mathbf{u}|^2 dx +  \|\sqrt{\varrho+\rho_s}\mathbf{u}_t\|_{L^2}^2 \\& \le 
C\norm{\sqrt{\varrho+\rho_s}\mathbf{u}}_{L^4}^4
\norm{\nabla \mathbf{u}}_{L^2}^2
+C\norm{\sqrt{\varrho+\rho_s}\mathbf{u}}_{L^4}^3
\norm{\nabla \mathbf{u}}_{L^2}^2\\&\quad+
C\norm{\sqrt{\varrho+\rho_s}\mathbf{u}}_{L^4}^2
\norm{\nabla \mathbf{u}}_{L^2}
+C\norm{\sqrt{\varrho+\rho_s}\mathbf{u}}_{L^4}^2
\norm{\nabla \mathbf{u}}_{L^2}^2
\\&
\quad+\int_{\Omega}\left(\varrho+\rho_s\right)\mathbf{u}
\nabla \
\left(
\ \mathbf{u}\cdot \nabla f\right)\,d\mathbf{x} 
\\& \le 
C\norm{\sqrt{\varrho+\rho_s}\mathbf{u}}_{L^4}^4
\norm{\nabla \mathbf{u}}_{L^2}^2
+C\norm{\sqrt{\varrho+\rho_s}\mathbf{u}}_{L^4}^2
\norm{\nabla \mathbf{u}}_{L^2}^3
\\&\quad+C\norm{\mathbf{u}}_{L^2}
\norm{\nabla \mathbf{u}}_{L^2}^2
+C\norm{\sqrt{\varrho+\rho_s}\mathbf{u}}_{L^2}
\norm{\nabla \mathbf{u}}_{L^2}^{3}
+C\norm{\nabla \mathbf{u}}_{L^2}^{2}
\\& \le 
C_0\norm{\nabla \mathbf{u}}_{L^2}^2
\left(2+C_0+\norm{\nabla \mathbf{u}}_{L^2}^2\right)
\ln \left(2+C_0+\norm{\nabla \mathbf{u}}_{L^2}^2\right)
\\&\quad+C_0\norm{\nabla \mathbf{u}}_{L^2}^3
\sqrt{\left(2+C_0+\norm{\nabla \mathbf{u}}_{L^2}^2\right)
\ln \left(2+C_0+\norm{\nabla \mathbf{u}}_{L^2}^2\right)}
\\&\quad+C\norm{\mathbf{u}}_{L^2}
\norm{\nabla \mathbf{u}}_{L^2}^2
+C\norm{\sqrt{\varrho+\rho_s}\mathbf{u}}_{L^2}
\norm{\nabla \mathbf{u}}_{L^2}^{3}
+C\norm{\nabla \mathbf{u}}_{L^2}^{2}
\\& \le 
C_0\norm{\nabla \mathbf{u}}_{L^2}^2
\left(2+C_0+\norm{\nabla \mathbf{u}}_{L^2}^2\right)
\ln \left(2+C_0+\norm{\nabla \mathbf{u}}_{L^2}^2\right)
\\&\quad+C_0\norm{\nabla \mathbf{u}}_{L^2}^3
\sqrt{\left(2+C_0+\norm{\nabla \mathbf{u}}_{L^2}^2\right)
\ln \left(2+C_0+\norm{\nabla \mathbf{u}}_{L^2}^2\right)}
\\&\quad+C_0
\norm{\nabla \mathbf{u}}_{L^2}^2
+C_0
\norm{\nabla \mathbf{u}}_{L^2}^{3}
+C_0\norm{\nabla \mathbf{u}}_{L^2}^{2}.
\end{aligned}
\]
Integrating the preceding inequality in time over the interval $[0, t]$, we obtain
\begin{align}\label{nablau-l2-detail}
\begin{aligned}
&\int_{\Omega}\left(\varrho+\rho_s\right)\mathbf{u}\cdot \nabla f\,d\mathbf{x} +
\frac{\nu}{2}\int_\Omega |\nabla \mathbf{u}|^2 dx + \int_0^t \|\sqrt{\varrho+\rho_s}\mathbf{u}_s\|_{L^2}^2 \,ds 
 \\&\leq 
 \int_{\Omega}\left(\varrho_0+\rho_s\right)\mathbf{u}_0\cdot \nabla f\,d\mathbf{x} +
\frac{\nu}{2}\int_\Omega |\nabla \mathbf{u}_0|^2 d\mathbf{x} 
 +\int_0^t
 \norm{\nabla \mathbf{u}}_{L^2}^2
 h\left(
\norm{\nabla \mathbf{u}}_{L^2}^2
 \right)\,ds,
 \end{aligned}
\end{align}
where $h(z)$ is a positive function given by
\[
\begin{aligned}
h(z)=&
\left(2+C_0+z\right)
\ln
 \left(2+C_0+z\right)
 \\&+\sqrt{2+C_0+z}\sqrt{\left(2+C_0+z\right)
\ln
 \left(2+C_0+z\right)}
+C_0\sqrt{z}+C_0.
 \end{aligned}
\]

Using the energy estimate \eqref{estimates-2}, inequality \eqref{nablau-l2-detail} implies
\begin{align}\label{nablau-l2-detail-3}
\frac{\nu}{2} \int_\Omega |\nabla \mathbf{u}|^2  d\mathbf{x} \leq C_0 + \int_0^t \norm{\nabla \mathbf{u}}_{L^2}^2  h\left( \norm{\nabla \mathbf{u}}_{L^2}^2 \right) ds.
\end{align}
To facilitate the analysis, we introduce the following notation:
\[
y(t) = \int_\Omega |\nabla \mathbf{u}(t)|^2  d\mathbf{x}, \quad g(t) = \norm{\nabla \mathbf{u}(t)}_{L^2}^2, \quad w(z) = \frac{2}{\nu}  h(z).
\]
In terms of these functions, inequality \eqref{nablau-l2-detail-3} can be rewritten as
\[
y(t) \leq \frac{2C_0}{\nu} + \int_0^t g(s)  w(y(s))  ds.
\]

Define the function
\[
G(z) = \int_{C_0}^z \frac{1}{w(s)}  ds.
\]
A direct verification shows that
\[
\int_{C_0}^{+\infty} \frac{1}{w(s)}  ds = +\infty,
\]
which implies that \( w \) is a non-decreasing function. Therefore, an application of Lemma \ref{one-inequality} yields
\[
\begin{aligned}
\int_\Omega |\nabla \mathbf{u}|^2 dx&=y(t)
\leq \frac{2C_0}{\nu}
 +\int_0^t
g(s)
w\left(
y(s)
 \right)\,ds
\\&\leq G^{-1}\left(G(C_0)+\int_0^t g(s)\,ds\right)
\\&=G^{-1}\left(
\int_0^t\norm{\nabla \mathbf{u}(\tau)}_{L^2}^2\,d\tau
\right)
\\&\leq G^{-1}\left(
\int_0^{+\infty}\norm{\nabla \mathbf{u}(\tau)}_{L^2}^2\,d\tau
\right)<+\infty.
\end{aligned}
\]
This, combining with \eqref{nablau-l2-detail}, yields
\eqref{estimates-1-nablau} and \eqref{estimates-2-nablau}.

\end{proof}

\subsection{Estimates for $\|\nabla \mathbf{u}_t\|_{
L^{2}\left((0,\infty);L^2(\Omega)\right)}$ and
$\norm{\sqrt{\varrho+\rho_s}\mathbf{u}_t}_{L^{\infty}\left((0,\infty);L^2(\Omega)\right)}$ }

\begin{lemma}\label{lemma-nablau_t-l3}
Let $(\mathbf{u}, \varrho)$ be a solution of the problem \eqref{main-peturbation} subject to the condition \eqref{main-perturbation-2}, with initial data  $(\mathbf{u}_0,\varrho_0)\in H^2\left(\Omega\right)\times L^{\infty}\left(\Omega\right) $.
If $\nabla f \in  W^{1,\infty}\left(\Omega\right)$ we have
\begin{align}\label{estimates-1-nablau}
\begin{aligned}
&\sqrt{\varrho+\rho_s}\mathbf{u}_t\in L^{\infty}\left((0,\infty);L^2(\Omega)\right),\quad\nabla \mathbf{u}_t \in L^{2}\left((0,\infty);L^2(\Omega)\right).
\end{aligned}
\end{align}
Furthermore, we have
\begin{align}
\begin{aligned}
\int_{\Omega}
\left(\varrho+\rho_s\right)\abs{\mathbf{u}_{t}}^2\,d
\mathbf{x}+C_0\nu\int_0^t\norm{\nabla \mathbf{u}_{\tau}}_{L^2}^2\,d\mathbf{x}\,d\tau\leq C_0.
\end{aligned}
\end{align}
\end{lemma}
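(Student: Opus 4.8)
The plan is to differentiate the momentum equation $\eqref{main-peturbation}_1$ with respect to $t$ and test the result against $\mathbf{u}_t$. Writing $\rho:=\varrho+\rho_s$ for the total density and using $\nabla\cdot\mathbf{u}_t=0$ together with $\mathbf{u}_t|_{\partial\Omega}=\mathbf{0}$, the pressure contribution $\int_\Omega\nabla P_t\cdot\mathbf{u}_t\,d\mathbf{x}$ vanishes and, combining $\int_\Omega\rho\mathbf{u}_{tt}\cdot\mathbf{u}_t$ with $\tfrac12\int_\Omega\rho_t|\mathbf{u}_t|^2$, one arrives at the identity
\[
\begin{aligned}
\frac{1}{2}\frac{d}{dt}\norm{\sqrt{\rho}\,\mathbf{u}_t}_{L^2}^2 + \nu\norm{\nabla\mathbf{u}_t}_{L^2}^2
={}& -\frac12\int_\Omega\rho_t|\mathbf{u}_t|^2\,d\mathbf{x} - \int_\Omega\rho_t(\mathbf{u}\cdot\nabla)\mathbf{u}\cdot\mathbf{u}_t\,d\mathbf{x} \\
&- \int_\Omega\rho(\mathbf{u}_t\cdot\nabla)\mathbf{u}\cdot\mathbf{u}_t\,d\mathbf{x} - \int_\Omega\rho(\mathbf{u}\cdot\nabla)\mathbf{u}_t\cdot\mathbf{u}_t\,d\mathbf{x} \\
&- \int_\Omega\varrho_t\,\nabla f\cdot\mathbf{u}_t\,d\mathbf{x}.
\end{aligned}
\]
Because only $\varrho_0\in L^\infty$ is assumed, no spatial derivative of $\rho$ is available, so every term carrying $\rho_t$ (or $\varrho_t$) must be rewritten through the continuity relation $\rho_t=-\nabla\cdot(\rho\mathbf{u})$ followed by an integration by parts, which transfers derivatives onto the velocity factors and leaves $\rho$ undifferentiated. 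In particular the first and fourth terms combine, and the term $\int_\Omega\rho_t(\mathbf{u}\cdot\nabla)\mathbf{u}\cdot\mathbf{u}_t$ becomes $\int_\Omega\rho\,\mathbf{u}\cdot\nabla\big[(\mathbf{u}\cdot\nabla)\mathbf{u}\cdot\mathbf{u}_t\big]\,d\mathbf{x}$, producing contributions in $\nabla^2\mathbf{u}$ and $\nabla\mathbf{u}_t$. The forcing term is handled identically: $-\int_\Omega\varrho_t\,\nabla f\cdot\mathbf{u}_t=\int_\Omega\rho\,\mathbf{u}\cdot\nabla(\nabla f\cdot\mathbf{u}_t)$, which is exactly where the hypothesis $\nabla f\in W^{1,\infty}(\Omega)$ enters, since $\nabla(\nabla f\cdot\mathbf{u}_t)$ involves $\nabla^2 f$.

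Next I would estimate each resulting term by two–dimensional Gagliardo–Nirenberg/Ladyzhenskaya interpolation, the elliptic bound \eqref{delta-u} in the form $\norm{\nabla^2\mathbf{u}}_{L^2}\le C\norm{\sqrt{\rho}\,\mathbf{u}_t}_{L^2}+C_0$, and the uniform controls $\norm{\nabla\mathbf{u}}_{L^\infty((0,\infty);L^2)}\le C_0$ and $\norm{\mathbf{u}}_{L^\infty((0,\infty);L^2)}\le C_0$ coming from Lemmas \ref{lemma-u-l2}--\ref{lemma-nablau-l2}. The guiding principle is to absorb each occurrence of the top-order quantity $\norm{\nabla\mathbf{u}_t}_{L^2}^2$ into the viscous dissipation and each $\norm{\nabla^2\mathbf{u}}_{L^2}^2$ into $\Phi(t):=\norm{\sqrt{\rho}\,\mathbf{u}_t}_{L^2}^2$ through \eqref{delta-u}, while arranging that every remaining factor multiplying $\Phi$ carries a power of the time-integrable weight $g(t):=\norm{\nabla\mathbf{u}}_{L^2}^2$. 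Here the Poincaré inequality on $H^1_0(\Omega)$ is used systematically to replace the merely bounded factor $\norm{\mathbf{u}}_{L^2}$ by $\norm{\nabla\mathbf{u}}_{L^2}$, which is what converts constant coefficients into the integrable weight $g(t)$, whose finiteness $\int_0^\infty g\,dt<\infty$ is furnished by \eqref{estimates-2}. The outcome is a differential inequality of the form
\[
\frac{d}{dt}\Phi(t) + \nu\norm{\nabla\mathbf{u}_t}_{L^2}^2 \le C_0\,g(t)\bigl(1+\Phi(t)\bigr),
\]
after which Lemma \ref{one-inequality} (Grönwall) gives $\sup_{t>0}\Phi(t)\le C_0$ and $\int_0^\infty\norm{\nabla\mathbf{u}_t}_{L^2}^2\,dt\le C_0$, provided $\Phi(0)=\norm{\sqrt{\rho_0}\,\mathbf{u}_t(0)}_{L^2}^2<\infty$; the latter follows by evaluating $\eqref{main-peturbation}_1$ at $t=0$ using $\mathbf{u}_0\in H^2(\Omega)$, the data bound $\varrho_0\nabla f\in L^2$, and the elliptic estimate for $\nabla P_0$.

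The hard part will be the bookkeeping in the combined first/fourth terms and in the $\rho_t(\mathbf{u}\cdot\nabla)\mathbf{u}\cdot\mathbf{u}_t$ term: after integration by parts these generate quantities of the schematic type $\int_\Omega\rho\,|\mathbf{u}|^2\,\nabla^2\mathbf{u}\,\mathbf{u}_t\,d\mathbf{x}$ and $\int_\Omega\rho\,\mathbf{u}\,(\nabla\mathbf{u})^2\,\mathbf{u}_t\,d\mathbf{x}$, whose naive interpolation through $\norm{\mathbf{u}}_{L^\infty}$ produces either a \emph{constant} coefficient in front of $\Phi$ — which would only yield exponential, finite-time growth and destroy the claimed uniform bound — or even a supercritical power such as $\Phi^{4/3}$. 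The resolution is to distribute the Hölder exponents through $L^6$-norms rather than $L^\infty$, combine with $\norm{\nabla^2\mathbf{u}}_{L^2}\le C\Phi^{1/2}+C_0$, and extract the integrable weight $g(t)$ from the two undifferentiated velocity factors via Poincaré; this simultaneously keeps $\Phi$ at most linear and makes its coefficient time-integrable. Balancing these exponents so that $\norm{\nabla\mathbf{u}_t}_{L^2}^2$ and $\norm{\nabla^2\mathbf{u}}_{L^2}^2$ remain absorbable at every step — all while never differentiating the only-$L^\infty$ density — is the delicate core of the argument, and it is precisely what upgrades the estimate from a finite-time to a global-in-time bound.
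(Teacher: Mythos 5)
Your proposal is correct and follows essentially the same route as the paper's proof: the same time-differentiated momentum equation tested with $\mathbf{u}_t$, the same rewriting of every $\varrho_t$-term through the continuity equation followed by integration by parts so the merely-$L^\infty$ density is never differentiated (the paper's terms $J_1$--$J_4$, with your first/fourth grouping matching its $J_1$, and $\nabla f\in W^{1,\infty}$ entering exactly through $\nabla(\nabla f\cdot\mathbf{u}_t)$ in $J_4$), the same absorption of $\norm{\nabla\mathbf{u}_t}_{L^2}^2$ into the dissipation and of $\norm{\nabla^2\mathbf{u}}_{L^2}$ via \eqref{delta-u}, the same linear differential inequality with the time-integrable weight $\norm{\nabla\mathbf{u}}_{L^2}^2$ closed by Gr\"onwall, and the same bound on $\norm{\sqrt{\varrho_0+\rho_s}\,\mathbf{u}_t(0)}_{L^2}$ obtained from the momentum equation at $t=0$ using $\mathbf{u}_0\in H^2(\Omega)$. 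No gaps.
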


\begin{proof}

Differentiating equation $\eqref{main-peturbation}_1$ in time yields the following evolution equation for $\mathbf{u}_t$:
\begin{align}\label{utttt}
\begin{aligned}
&\left(\varrho + \rho_s\right) \mathbf{u}_{tt} + \varrho_t \mathbf{u}_{t}
+ \varrho_t (\mathbf{u} \cdot \nabla) \mathbf{u}
+ \left(\varrho + \rho_s\right) (\mathbf{u}_t \cdot \nabla) \mathbf{u}
\\
&\quad = \nu \Delta \mathbf{u}_t - \nabla P_t - \varrho_t \nabla f
- \left(\varrho + \rho_s\right) (\mathbf{u} \cdot \nabla) \mathbf{u}_t.
\end{aligned}
\end{align}
We now test the resulting equation by $\mathbf{u}_t$ and integrate over $\Omega$.
\begin{align}\label{utt-nablautt}
\begin{aligned}
&\frac{1}{2}
\frac{d}{dt}
\int_{\Omega}
\left(\varrho+\rho_s\right)\abs{\mathbf{u}_{t}}^2\,d
\mathbf{x}+\nu\int_{\Omega}
\abs{\nabla \mathbf{u}_{t}}^2\,d\mathbf{x}
\\&=-\int_{\Omega}\left(\varrho+\rho_s\right)(\mathbf{u} \cdot \nabla )\mathbf{u}_t
\cdot \mathbf{u}_t\,d\mathbf{x}
-\int_{\Omega}\varrho_t(\mathbf{u} \cdot \nabla )\mathbf{u}
\cdot \mathbf{u}_t\,d\mathbf{x}
\\&\quad 
-\int_{\Omega}\left(\varrho+\rho_s\right)(\mathbf{u}_t \cdot \nabla )\mathbf{u}
\cdot \mathbf{u}_t\,d\mathbf{x}
-\int_{\Omega}
\varrho_t\left(\nabla f\cdot \mathbf{u}_t\right)\,d\mathbf{x}\\
&=-\frac{1}{2}
\int_{\Omega}
\left(\mathbf{u}\left(\varrho+\rho_s\right)\right)
\nabla
\abs{\mathbf{u}_t}^2
\,d\mathbf{x}
+\int_{\Omega}\left(\mathbf{u}\cdot\nabla\left(\varrho+\rho_s\right)\right)(\mathbf{u} \cdot \nabla )\mathbf{u}
\cdot \mathbf{u}_t\,d\mathbf{x}
\\&\quad-\int_{\Omega}\left(\varrho+\rho_s\right)(\mathbf{u}_t \cdot \nabla )\mathbf{u}
\cdot \mathbf{u}_t\,d\mathbf{x}
+\int_{\Omega}
\left(\mathbf{u}\cdot\nabla\left(\varrho+\rho_s\right)\right)
\left(\nabla f\cdot \mathbf{u}_t\right)\,d\mathbf{x}
\\&:=J_1+J_2+J_3+J_4.
\end{aligned}
\end{align}

We now bound each term \( J_j \) for \( j = 1, 2, 3, 4 \). By applying Hölder's inequality, Sobolev embedding, and Young's inequality, we deduce that
\begin{align}\label{J-one}
\begin{aligned}
J_1&=-
\frac{1}{2}
\int_{\Omega}
\left(\mathbf{u}\left(\varrho+\rho_s\right)\right)
\nabla
\abs{\mathbf{u}_t}^2
\,d\mathbf{x}
\\& \leq
\norm{\sqrt{\varrho+\rho_s}\mathbf{u}}_{L^4}
\norm{\sqrt{\varrho+\rho_s}\mathbf{u}_t}_{L^4}
\norm{\nabla \mathbf{u}_t}_{L^2}
\\ &\leq \frac{\nu}{8}\norm{\nabla \mathbf{u}_t}_{L^2}^2
+C \norm{\sqrt{\varrho+\rho_s}\mathbf{u}}_{L^4}^2
\norm{\sqrt{\varrho+\rho_s}\mathbf{u}_t}_{L^4}^2\\
&\leq \frac{\nu}{8}\norm{\nabla \mathbf{u}_t}_{L^2}^2
+C_0
\norm{\sqrt{\varrho+\rho_s}\mathbf{u}}_{L^4}^2
\norm{\sqrt{\varrho+\rho_s}\mathbf{u}_t}_{L^2}
\norm{\nabla \mathbf{u}_t}_{L^2}\\
&\leq \frac{\nu}{8}\norm{\nabla \mathbf{u}_t}_{L^2}^2
+C_0 \norm{\sqrt{\varrho+\rho_s}\mathbf{u}_t}_{L^2}^2
\norm{\nabla \mathbf{u}}_{L^2}^2.
\end{aligned}
\end{align}

For the term \( J_2 \), an application of the estimate \eqref{estimates-2-nablau} yields\begin{align}
\begin{aligned}
J_2&=\int_{\Omega}\left(\mathbf{u}\cdot\nabla\left(\varrho+\rho_s\right)\right)(\mathbf{u} \cdot \nabla )\mathbf{u}
\cdot \mathbf{u}_t\,d\mathbf{x}\\
&=-\int_{\Omega}\left(\varrho+\rho_s\right) \mathbf{u}\cdot\nabla \left((\mathbf{u} \cdot \nabla )\mathbf{u}
\cdot \mathbf{u}_t\right)\,d\mathbf{x}
\\ &\leq 
\int_{\Omega}\left(\varrho+\rho_s\right) \abs{\mathbf{u}}
\abs{\mathbf{u}_t}
\left(
\abs{\nabla\mathbf{u}}^2+\abs{\mathbf{u}}\abs{\nabla^2\mathbf{u}}
\right)\,d\mathbf{x}
\\&\quad+\int_{\Omega}\left(\varrho+\rho_s\right) \abs{\mathbf{u}}^2
\abs{\nabla \mathbf{u}}\abs{\nabla \mathbf{u}_t}
\,d\mathbf{x}
\\&\leq 
\norm{\sqrt{\varrho+\rho_s}\mathbf{u}}_{L^6}
\norm{\sqrt{\varrho+\rho_s}\mathbf{u}_t}_{L^6}
\left(
\norm{\nabla \mathbf{u}}_{L^3}^2+\norm{\mathbf{u}}_{L^6}
\norm{\nabla^2 \mathbf{u}}_{L^2}
\right)\\
&\quad+C_0\norm{\sqrt{\varrho+\rho_s}\mathbf{u}}_{L^6}^2
\norm{\nabla \mathbf{u}}_{L^6}\norm{\nabla \mathbf{u}_t}_{L^2}\\
&\leq C_0\norm{\nabla \mathbf{u}_t}_{L^2}
\norm{\nabla \mathbf{u}}_{L^2}^2\norm{\nabla^2 \mathbf{u}}_{L^2}
+C_0\norm{\nabla \mathbf{u}_t}_{L^2}
\norm{\nabla \mathbf{u}}_{L^2}^3\\
&\leq \frac{\nu}{8} \norm{\nabla \mathbf{u}_t}_{L^2}^2+
C_0
\norm{\nabla \mathbf{u}}_{L^2}^4\norm{\nabla^2 \mathbf{u}}_{L^2}^2
+C_0\norm{\nabla \mathbf{u}}_{L^2}^6
\\&\leq 
\frac{\nu}{8} \norm{\nabla \mathbf{u}_t}_{L^2}^2+
C_0\norm{\nabla \mathbf{u}}_{L^2}^4
\norm{\sqrt{\varrho+\rho_s}\mathbf{u}_t}_{L^2}^2
+C_0\norm{\sqrt{\varrho+\rho_s}\mathbf{u}}_{L^4}^4
\norm{\nabla \mathbf{u}}_{L^2}^6\\
&\quad+C_0\norm{\sqrt{\varrho+\rho_s}\mathbf{u}}_{L^4}^2
\norm{\nabla \mathbf{u}}_{L^2}^6
+C_0\norm{\varrho  \nabla f}_{L^2}^2\norm{\nabla \mathbf{u}}_{L^2}^4
+C_0\norm{\nabla \mathbf{u}}_{L^2}^6\\
&\leq 
\frac{\nu}{8} \norm{\nabla \mathbf{u}_t}_{L^2}^2+
C_0\norm{\nabla \mathbf{u}}_{L^2}^2
\norm{\sqrt{\varrho+\rho_s}\mathbf{u}_t}_{L^2}^2
+C_0\norm{\sqrt{\varrho+\rho_s}\mathbf{u}}_{L^2}^2,
\end{aligned}
\end{align}
where we have used
\[
\begin{aligned}
\norm{\nabla^2 \mathbf{u}}_{L^2}^2
&\leq C\norm{\sqrt{\varrho+\rho_s}\mathbf{u}_t}_{L^2}^2
+C\norm{\sqrt{\varrho+\rho_s}\mathbf{u}}_{L^4}^4
\norm{\nabla \mathbf{u}}_{L^2}^2+
\\&\quad+C\norm{\sqrt{\varrho+\rho_s}\mathbf{u}}_{L^4}^2
\norm{\nabla \mathbf{u}}_{L^2}^2
+\norm{\varrho  \nabla f}_{L^2}^2.
\end{aligned}
\]

For the remaining terms \( J_3 \) and \( J_4 \), we have the collective estimate:\begin{align}
&\begin{aligned}
J_3&=
-\int_{\Omega}\left(\varrho+\rho_s\right)(\mathbf{u}_t \cdot \nabla )\mathbf{u}
\cdot \mathbf{u}_t\,d\mathbf{x}
\\& \leq \norm{\nabla \mathbf{u}}_{L^2}
\norm{\sqrt{\varrho+\rho_s}\mathbf{u}_t}_{L^4}^2
\\&\leq C_0
\norm{\nabla \mathbf{u}}_{L^2}
\norm{\sqrt{\varrho+\rho_s}\mathbf{u}_t}_{L^2}
\norm{\nabla \mathbf{u}_t}_{L^2}\\
&\leq
\frac{\nu}{8} \norm{\nabla \mathbf{u}_t}_{L^2}
+C_0
\norm{\nabla \mathbf{u}}_{L^2}^2
\norm{\sqrt{\varrho+\rho_s}\mathbf{u}_t}_{L^2}^2.
\end{aligned}\\ \label{J-four}
&\begin{aligned}
J_4&=-
\int_{\Omega}
\left(\mathbf{u}\left(\varrho+\rho_s\right)\right)
\nabla \left(\nabla f\cdot \mathbf{u}_t\right)
\,d\mathbf{x}
\\& \leq\frac{\nu}{8}  \norm{\nabla \mathbf{u}_t}_{L^2}^2
+
C_0
\norm{\sqrt{\varrho+\rho_s}\mathbf{u}}_{L^2}^2\\
&\leq \frac{\nu}{8}  \norm{\nabla \mathbf{u}_t}_{L^2}^2
+C_0
\norm{\nabla\mathbf{u}}_{L^2}^2.
\end{aligned}
\end{align}

Substituting \eqref{J-one}-\eqref{J-four} into \eqref{utt-nablautt}, we have
\begin{align}
\begin{aligned}
&\frac{d}{dt}\int_{\Omega}
\left(\varrho+\rho_s\right)\abs{\mathbf{u}_{t}}^2\,d
\mathbf{x}+\nu\int_{\Omega}
\abs{\nabla \mathbf{u}_{t}}^2\,d\mathbf{x}
\leq C_0\norm{\nabla \mathbf{u}}_{L^2}^2\norm{\sqrt{\varrho+\rho_s}\mathbf{u}_t}_{L^2}^2
+C_0
\norm{\nabla \mathbf{u}}_{L^2}^2.
\end{aligned}
\end{align}
Hence, Gronwall inequality yields 
\begin{align}
\begin{aligned}
&\int_{\Omega}
\left(\varrho+\rho_s\right)\abs{\mathbf{u}_{t}}^2\,d
\mathbf{x}+C_0\nu\int_0^t\norm{\nabla \mathbf{u}_{\tau}}_{L^2}^2\,d\mathbf{x}\,d\tau
\leq C_0\int_{\Omega}
\left(\varrho_0+\rho_s\right)\abs{\mathbf{u}_{t}(0)}^2\,d
\mathbf{x}+C_0.
\end{aligned}
\end{align}

To bound \( \norm{\sqrt{\varrho_0 + \rho_s}  \mathbf{u}_t(0)}_{L^2}^2 \), we multiply the momentum equation \(\eqref{main-peturbation}_1\) at \( t = 0 \) by \( \mathbf{u}_t(0) \in L^2(\Omega) \) and integrate over \( \Omega \), yielding
\begin{align*}
\begin{aligned}
&\int_{\Omega}\left(\varrho_0+\rho_s\right)\abs{\mathbf{u}_t}^2\,d\mathbf{x} 
-\nu\int_{\Omega}\Delta \mathbf{u}\cdot  \mathbf{u}_t\,d\mathbf{x} 
\\&+\int_{\Omega} \left(\varrho_0+\rho_s\right)\mathbf{u}_t\cdot( \mathbf{u}\cdot \nabla )\mathbf{u}\,d\mathbf{x} 
= -\int_{\Omega}\rho \nabla f\cdot \mathbf{u}_t \,d\mathbf{x}.
\end{aligned}
\end{align*}
From this, it follows that
  \begin{align*}
\begin{aligned}
\norm{\sqrt{\varrho_0+\rho_s}\mathbf{u}_t}^2_{L^2}
\leq& \nu \norm{\mathbf{u}_t}_{L^2}
\norm{\Delta \mathbf{u}}_{L^2}
+ C_0\norm{\sqrt{\varrho_0+\rho_s}\mathbf{u}_t}_{L^2}
\norm{( \mathbf{u}\cdot \nabla )\mathbf{u}}_{L^2}
\\&+C_0\norm{\nabla f}_{L^{\infty}}
\norm{\rho_0}_{L^2}
\norm{\sqrt{\varrho_0+\rho_s}\mathbf{u}_t}_{L^2},
\end{aligned}
\end{align*}
which gives
  \begin{align*}
\begin{aligned}
\norm{\sqrt{\varrho_0+\rho_s}\mathbf{u}_t(0)}^2_{L^2}\leq
C\left(
\nu\norm{\mathbf{u}_0}_{H^2}
+ \norm{\mathbf{u}_0}_{H^2}
\norm{\mathbf{u}_0}_{H^1}
+C_0\norm{\nabla f}_{L^{\infty}}
\norm{\rho_0}_{L^2}\right).
\end{aligned}
\end{align*}

\end{proof}

\begin{lemma}\label{lemma-nablau_t-l4}
For any solution $( \mathbf{u},\varrho)$ of the problem \eqref{main-peturbation}
subject to the boundary condition \eqref{main-perturbation-2} and
with initial data $(\mathbf{u}_0,\varrho_0)\in H^1\left(\Omega\right)\times L^{\infty}\left(\Omega\right) $,
if $\nabla f \in  W^{1,\infty}\left(\Omega\right)$, we have
\begin{align}\label{estimates-1-nablau-3}
\begin{aligned}
&\mathbf{u}_t \in L^{2}\left((0,\infty);L^p(\Omega)\right),\quad 1\leq p<\infty.
\end{aligned}
\end{align}
\end{lemma}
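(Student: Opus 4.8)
The plan is to deduce \eqref{estimates-1-nablau-3} as a direct corollary of the space--time $H^1$ regularity of $\mathbf{u}_t$ already established, combined with the two-dimensional Sobolev embedding $H^1(\Omega)\hookrightarrow L^p(\Omega)$, which holds for every finite $p$. There is no new estimate to prove; the content is entirely transport of the bounds secured in the previous two lemmas through an embedding.

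First I would collect the two ingredients. Lemma \ref{lemma-nablau-l2} gives $\sqrt{\varrho+\rho_s}\,\mathbf{u}_t\in L^2((0,\infty);L^2(\Omega))$; since the total density is bounded below by $\alpha_1>0$ along particle paths by \eqref{density-bounds}, the pointwise bound $\tfrac{1}{\sqrt{\varrho+\rho_s}}\leq\tfrac{1}{\sqrt{\alpha_1}}$ yields $\mathbf{u}_t\in L^2((0,\infty);L^2(\Omega))$. Lemma \ref{lemma-nablau_t-l3} gives $\nabla\mathbf{u}_t\in L^2((0,\infty);L^2(\Omega))$. Adding the two space--time bounds,
\[
\int_0^\infty \|\mathbf{u}_t(t)\|_{H^1(\Omega)}^2\,dt
\leq C\int_0^\infty\Bigl(\|\mathbf{u}_t(t)\|_{L^2}^2+\|\nabla\mathbf{u}_t(t)\|_{L^2}^2\Bigr)\,dt<\infty,
\]
so $\mathbf{u}_t\in L^2((0,\infty);H^1(\Omega))$. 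I would also note that differentiating the no-slip condition $\mathbf{u}|_{\partial\Omega}=\mathbf{0}$ in time gives $\mathbf{u}_t|_{\partial\Omega}=\mathbf{0}$, hence $\mathbf{u}_t(t)\in H^1_0(\Omega)$ for a.e. $t$, which permits the free use of Poincar\'e's inequality if one prefers to work with $\|\nabla\mathbf{u}_t\|_{L^2}$ alone.

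Next I would invoke the planar embedding. On a bounded domain $\Omega\subset\R^2$ with smooth boundary one has $H^1(\Omega)\hookrightarrow L^p(\Omega)$ for every $1\leq p<\infty$; equivalently, the Gagliardo--Nirenberg inequality gives $\|\phi\|_{L^p}\leq C_p\|\phi\|_{L^2}^{2/p}\|\nabla\phi\|_{L^2}^{1-2/p}\leq C_p\|\phi\|_{H^1}$ for $2\leq p<\infty$, while for $1\leq p\leq 2$ the inclusion $L^2(\Omega)\hookrightarrow L^p(\Omega)$ on the bounded domain already gives $\|\phi\|_{L^p}\leq C\|\phi\|_{L^2}$. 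Applying this pointwise in $t$ to $\phi=\mathbf{u}_t(t)$, squaring, and integrating over $(0,\infty)$ yields
\[
\int_0^\infty\|\mathbf{u}_t(t)\|_{L^p(\Omega)}^2\,dt\leq C_p^2\int_0^\infty\|\mathbf{u}_t(t)\|_{H^1(\Omega)}^2\,dt<\infty,
\]
which is precisely \eqref{estimates-1-nablau-3}.

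I do not expect a genuine analytical obstacle here, since the result is a corollary of the two space--time $L^2$ bounds from Lemmas \ref{lemma-nablau-l2} and \ref{lemma-nablau_t-l3}. The only point deserving care is the finiteness of $p$: the embedding constant $C_p$ degenerates as $p\to\infty$ (indeed $H^1\not\hookrightarrow L^\infty$ in two dimensions), so the conclusion must be asserted for each fixed finite $p$ with a $p$-dependent constant and cannot be extended to $p=\infty$. One should also confirm that the bound $\nabla\mathbf{u}_t\in L^2((0,\infty);L^2(\Omega))$ is available in the regularity class in force, invoking Lemma \ref{lemma-nablau_t-l3} through the smoothness already established for the solution.
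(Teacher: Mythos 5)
Your proposal is correct and coincides with the paper's own proof, which likewise derives the result directly from the space--time $L^2$ bound on $\nabla\mathbf{u}_t$ in Lemma \ref{lemma-nablau_t-l3} together with the planar embedding $H^1(\Omega)\hookrightarrow L^p(\Omega)$, $1\leq p<\infty$, of Lemma \ref{lem:sobolev}. Your additional remarks---recovering the $L^2$-in-time control of $\|\mathbf{u}_t\|_{L^2}$ from Lemma \ref{lemma-nablau-l2} via the lower density bound \eqref{density-bounds} (or alternatively via Poincar\'e, since $\mathbf{u}_t\in H^1_0(\Omega)$), and the caveat that the constant degenerates as $p\to\infty$---are accurate elaborations of the same argument.
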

\begin{proof}
This lemma is a direct consequence of Lemma \ref{lemma-nablau_t-l3} and the estimate \eqref{lem:sobolev-1} in Lemma \ref{lem:sobolev}.
\end{proof}

\subsection{Estimates for $\|\mathbf{u}\|_{
L^{\infty}\left((0,\infty);L^{\infty}(\Omega)\right)}$ and
$\norm{\mathbf{u}}_{ L^{2}\left([0,T];W^{2,p}(\Omega)\right)}$ }

\begin{lemma}\label{lemma-nablau_t-l5}
Let $(\mathbf{u}, \varrho)$ be a solution of the problem \eqref{main-peturbation} subject to the condition \eqref{main-perturbation-2}, with initial data  $(\mathbf{u}_0,\varrho_0)\in H^2\left(\Omega\right)\times L^{\infty}\left(\Omega\right) $.
If $\nabla f \in  W^{1,\infty}\left(\Omega\right)$ we have
\begin{align}\label{estimates-1-nablau-3}
\begin{aligned}
&\mathbf{u} \in L^{\infty}\left((0,\infty);L^{\infty}(\Omega)\right),\quad
1\leq p<\infty.
\end{aligned}
\end{align}
\end{lemma}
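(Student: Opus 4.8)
The plan is to bootstrap a uniform-in-time $H^2$ bound on $\mathbf{u}$ from the global estimates already secured in Lemmas \ref{lemma-u-l2}--\ref{lemma-nablau_t-l3}, and then to invoke the two-dimensional Sobolev embedding $H^2(\Omega)\hookrightarrow L^\infty(\Omega)$. Concretely, I would reuse the elliptic (Stokes) bound \eqref{delta-u}, derived in the proof of Lemma \ref{lemma-nablau-l2} by reading the momentum equation $\eqref{main-peturbation}_1$ as a stationary Stokes system $-\nu\Delta\mathbf{u}+\nabla P = -(\varrho+\rho_s)\mathbf{u}_t-(\varrho+\rho_s)(\mathbf{u}\cdot\nabla)\mathbf{u}-\varrho\nabla f$ together with $\nabla\cdot\mathbf{u}=0$ and $\mathbf{u}|_{\partial\Omega}=\mathbf{0}$. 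That estimate controls $\|\nabla^2\mathbf{u}\|_{L^2}$ by $\|\sqrt{\varrho+\rho_s}\mathbf{u}_t\|_{L^2}$, $\|\sqrt{\varrho+\rho_s}\mathbf{u}\|_{L^4}$, $\|\nabla\mathbf{u}\|_{L^2}$ and $\|\varrho\nabla f\|_{L^2}$, with no occurrence of $\|\nabla^2\mathbf{u}\|_{L^2}$ on the right-hand side, so no circular absorption is needed.

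Second, I would verify that each quantity on the right of \eqref{delta-u} is bounded uniformly in $t$. By Lemma \ref{lemma-nablau-l2} we have $\|\nabla\mathbf{u}(t)\|_{L^2}\le C_0$, hence, via the Poincar\'e inequality (legitimate since $\mathbf{u}|_{\partial\Omega}=\mathbf{0}$), $\|\mathbf{u}(t)\|_{H^1}\le C_0$. By Lemma \ref{lemma-nablau_t-l3} we have $\|\sqrt{\varrho+\rho_s}\mathbf{u}_t(t)\|_{L^2}\le C_0$. From the pointwise density bounds \eqref{density-bounds} one gets $\|\varrho\|_{L^\infty}\le C_0$, whence $\|\varrho\nabla f\|_{L^2}\le\|\varrho\|_{L^2}\|\nabla f\|_{L^\infty}\le C_0$, using $\nabla f\in W^{1,\infty}\subset L^\infty$. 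The only delicate term is $\|\sqrt{\varrho+\rho_s}\mathbf{u}\|_{L^4}$: here I would feed the uniform bounds $\|\mathbf{u}\|_{H^1}\le C_0$ and $\|\sqrt{\varrho+\rho_s}\mathbf{u}\|_{L^2}\le C_0$ (the latter from \eqref{estimates-2}) into the logarithmic interpolation inequality \eqref{ln-nablau}, which yields $\|\sqrt{\varrho+\rho_s}\mathbf{u}\|_{L^4}^2\le C_0$ uniformly in time. Substituting these four uniform bounds into \eqref{delta-u} gives $\sup_{t>0}\|\nabla^2\mathbf{u}(t)\|_{L^2}\le C_0$, i.e. $\mathbf{u}\in L^\infty((0,\infty);H^2(\Omega))$.

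Finally, since $\Omega\subset\R^2$ is a bounded domain with smooth boundary, the (critical) embedding $H^2(\Omega)\hookrightarrow C(\bar\Omega)\hookrightarrow L^\infty(\Omega)$ holds, so $\|\mathbf{u}(t)\|_{L^\infty}\le C\|\mathbf{u}(t)\|_{H^2}\le C_0$ for all $t>0$, which is the desired conclusion. I expect the main obstacle to lie not in the embedding step but in establishing that the borderline quantity $\|\sqrt{\varrho+\rho_s}\mathbf{u}\|_{L^4}$ remains bounded uniformly in time; this is precisely where the refined logarithmic inequality \eqref{ln-nablau} — rather than the crude bound $\|\sqrt{\varrho+\rho_s}\mathbf{u}\|_{L^4}^2\le\|\nabla\mathbf{u}\|_{L^2}^2$ — is indispensable, and it explains why the uniform $H^1$ estimate of Lemma \ref{lemma-nablau-l2} must be in hand beforehand. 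A secondary point to keep track of is that every constant denoted $C_0$ is genuinely time-independent, which ultimately rests on the global integrability $\int_0^\infty\|\nabla\mathbf{u}\|_{L^2}^2\,d\tau<\infty$ underlying \eqref{estimates-2-nablau}.
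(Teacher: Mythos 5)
Your proposal is correct and follows essentially the same route as the paper: a uniform-in-time $H^2$ bound via the Stokes estimate applied to the momentum equation (the paper's \eqref{H-2}, your reuse of \eqref{delta-u}), fed by the uniform bounds on $\norm{\nabla \mathbf{u}}_{L^2}$ and $\norm{\sqrt{\varrho+\rho_s}\,\mathbf{u}_t}_{L^2}$ from Lemmas \ref{lemma-nablau-l2} and \ref{lemma-nablau_t-l3}, followed by the two-dimensional embedding of $H^2(\Omega)$ into $L^\infty(\Omega)$. The only (harmless) deviation is your closing remark that the logarithmic inequality \eqref{ln-nablau} is indispensable for the $L^4$ term at this stage: once the uniform $H^1$ bound is in hand, the crude Ladyzhenskaya bound \eqref{lem:sobolev-3} already gives $\norm{\sqrt{\varrho+\rho_s}\,\mathbf{u}}_{L^4}^2\leq C_0\norm{\nabla\mathbf{u}}_{L^2}\leq C_0$ (which is what the paper implicitly uses in passing to the second line of \eqref{H-2}); the refined inequality is genuinely indispensable only earlier, inside the proof of Lemma \ref{lemma-nablau-l2} itself.
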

\begin{proof}
Based on Lemma \ref{lemma-nablau-l2} and Lemma \ref{lemma-nablau_t-l3}, we conclude that both $\norm{\nabla \mathbf{u}}_{L^2}$ and $\norm{\sqrt{\varrho+\rho_s} \mathbf{u}_t}_{L^2}$ are uniformly bounded in time. Then, appealing to Lemma \ref{lem:stokes}, we find that
\begin{align}\label{H-2}
\begin{aligned}
\norm{ \mathbf{u}}_{H^2}^2
&\leq C\norm{\sqrt{\varrho+\rho_s}\mathbf{u}_t}_{L^2}^2
+C\norm{\sqrt{\varrho+\rho_s}\mathbf{u}}_{L^4}^4
\norm{\nabla \mathbf{u}}_{L^2}^2
\\&\quad+C\norm{\sqrt{\varrho+\rho_s}\mathbf{u}}_{L^4}^2
\norm{\nabla \mathbf{u}}_{L^2}^2
+\norm{\varrho  \nabla f}_{L^2}^2
\\&\leq 
C\norm{\sqrt{\varrho+\rho_s}\mathbf{u}_t}_{L^2}^2
+C_0\norm{\nabla \mathbf{u}}_{L^2}^4
+C_0
\norm{\nabla \mathbf{u}}_{L^2}^3
+\norm{\varrho  \nabla f}_{L^2}^2,
\end{aligned}
\end{align}
by which, Sobolev embedding $H^2(\Omega)\subset W^{1,p}(\Omega)$
where $1\leq p<\infty$ and \eqref{lem:sobolev-2}, we see that
\begin{align}\label{L-infty}
\begin{aligned}
\norm{ \mathbf{u}}_{L^{\infty}}^2
&\leq C_0,\quad \forall t\in [0,+\infty).
\end{aligned}
\end{align}
where $C_0$ is independent of time $t$. 
This gives  \eqref{estimates-1-nablau-3}.

\end{proof}

\subsection{Estimates for 
$\norm{\mathbf{u}}_{ L^{2}\left([0,T];W^{2,p}(\Omega)\right)}$ }
\begin{lemma}\label{lemma-u_w-2p}
Let $(\mathbf{u}, \varrho)$ be a solution of the problem \eqref{main-peturbation} subject to the condition \eqref{main-perturbation-2}, with initial data  $(\mathbf{u}_0,\varrho_0)\in H^2\left(\Omega\right)\times L^{\infty}\left(\Omega\right) $.
If $\nabla f \in  W^{1,\infty}\left(\Omega\right)$, for $p\geq 2$, we have
\begin{align}\label{estimates-1-u-w-2p}
\begin{aligned}
 \mathbf{u} \in L^{2}\left([0,T];W^{2,p}(\Omega)\right),\quad
 \nabla \mathbf{u} 
 \in L^{2}\left([0,T];L^{\infty}(\Omega)\right),
\end{aligned}
\end{align}
\end{lemma}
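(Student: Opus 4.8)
The plan is to treat the momentum equation $\eqref{main-peturbation}_1$ as a stationary Stokes system at each fixed time and apply the $L^p$ elliptic regularity theory of Lemma \ref{lem:stokes}. Rearranging $\eqref{main-peturbation}_1$, I would write
\begin{align*}
-\nu\Delta\mathbf{u}+\nabla P=\mathbf{F},\quad \nabla\cdot\mathbf{u}=0,\quad \mathbf{u}|_{\partial\Omega}=\mathbf{0},\qquad \mathbf{F}:=-(\varrho+\rho_s)\mathbf{u}_t-(\varrho+\rho_s)(\mathbf{u}\cdot\nabla)\mathbf{u}-\varrho\nabla f.
\end{align*}
The Stokes estimate then gives $\norm{\mathbf{u}}_{W^{2,p}}\leq C\norm{\mathbf{F}}_{L^p}$ for every $p\geq2$, so the whole problem reduces to bounding $\norm{\mathbf{F}}_{L^p}$ in $L^2((0,T))$.

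Next I would estimate the three contributions to $\mathbf{F}$ separately, using the density bounds $\alpha_1\leq\varrho+\rho_s\leq\alpha_2$ from \eqref{density-bounds}. For the acceleration term, $\norm{(\varrho+\rho_s)\mathbf{u}_t}_{L^p}\leq\alpha_2\norm{\mathbf{u}_t}_{L^p}$, which lies in $L^2((0,\infty))$ by Lemma \ref{lemma-nablau_t-l4}; this is the only term that genuinely supplies the $L^2$-in-time integrability. For the convective term, $\norm{(\varrho+\rho_s)(\mathbf{u}\cdot\nabla)\mathbf{u}}_{L^p}\leq\alpha_2\norm{\mathbf{u}}_{L^\infty}\norm{\nabla\mathbf{u}}_{L^p}$, where $\norm{\mathbf{u}}_{L^\infty}$ is uniformly bounded by Lemma \ref{lemma-nablau_t-l5} and $\norm{\nabla\mathbf{u}}_{L^p}\leq C\norm{\mathbf{u}}_{H^2}$ (via the two-dimensional embedding $H^2\hookrightarrow W^{1,p}$, $p<\infty$) is uniformly bounded through \eqref{H-2} together with Lemma \ref{lemma-nablau_t-l3}. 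For the forcing term, $\norm{\varrho\nabla f}_{L^p}\leq\norm{\nabla f}_{L^\infty}\norm{\varrho}_{L^p}$ is uniformly bounded, since $\varrho+\rho_s\in L^\infty$ on the bounded domain $\Omega$ forces $\varrho\in L^p$ uniformly. The last two terms are therefore in $L^\infty((0,\infty))$, hence in $L^2((0,T))$ for any finite $T$.

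Combining these bounds yields $\mathbf{F}\in L^2((0,T);L^p)$ and hence $\mathbf{u}\in L^2((0,T);W^{2,p})$ for every $p\geq2$, which is the first assertion of \eqref{estimates-1-u-w-2p}. For the second assertion I would apply the result with a fixed $p>2$ and invoke the two-dimensional Sobolev embedding $W^{2,p}(\Omega)\hookrightarrow W^{1,\infty}(\Omega)$, so that $\norm{\nabla\mathbf{u}}_{L^\infty}\leq C\norm{\mathbf{u}}_{W^{2,p}}$ and the time integrability transfers directly, giving $\nabla\mathbf{u}\in L^2((0,T);L^\infty)$. The analysis presents no genuine difficulty once the earlier lemmas are in place; the only point requiring care is the structural observation that only the $\mathbf{u}_t$-term carries square-integrability in time, while the convective and forcing terms are merely uniformly bounded and rely on the finiteness of $T$---which is precisely why the conclusion is stated on $[0,T]$ rather than on $(0,\infty)$.
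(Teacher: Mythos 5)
Your proposal is correct and follows essentially the same route as the paper: both treat the momentum equation as a time-parametrized Stokes system, apply the $L^p$ regularity of Lemma \ref{lem:stokes} to $\mathbf{F}=-(\varrho+\rho_s)\mathbf{u}_t-(\varrho+\rho_s)(\mathbf{u}\cdot\nabla)\mathbf{u}-\varrho\nabla f$, draw the square-integrability in time solely from the $\mathbf{u}_t$ term via Lemma \ref{lemma-nablau_t-l4}, bound the convective and forcing terms uniformly using \eqref{H-2}, Lemma \ref{lemma-nablau_t-l5} and the density bounds, and then obtain $\nabla\mathbf{u}\in L^2([0,T];L^\infty)$ from the embedding \eqref{lem:sobolev-2} with $p>2$. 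The only cosmetic difference is that you bound the convective term directly by $\norm{\mathbf{u}}_{L^\infty}\norm{\nabla\mathbf{u}}_{L^p}$, whereas the paper first estimates $\norm{(\mathbf{u}\cdot\nabla)\mathbf{u}}_{H^1}$ and then embeds $H^1\hookrightarrow L^p$; the ingredients and the structural observation about where the time integrability comes from are identical.
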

\begin{proof}
We now prove the second part of \eqref{estimates-1-nablau-3}. Note that 
\begin{align*}
\begin{aligned}
\norm{ (\mathbf{u}\cdot \nabla )\mathbf{u}}_{H^{1}}^2
&\leq C\left(
\norm{ \mathbf{u}}_{L^{\infty}}^2+
\norm{ \mathbf{u}}_{H^{1}}^2
\right)\norm{ \mathbf{u}}_{H^{2}}^2.
\end{aligned}
\end{align*}
This, together with \eqref{lem:sobolev-1}, implies that
\begin{align}\label{nonlinear-l-p}
\begin{aligned}
\norm{ (\mathbf{u}\cdot \nabla )\mathbf{u}}_{L^{p}}^2
&\leq C_0,\quad \forall t\in [0,+\infty),
\end{aligned}
\end{align}
for $1\leq p<\infty$, where $C_0$ is independent of time $t$. 

For any $T\in (0,+\infty)$, we see that $\int_0^T\norm{\mathbf{u}}_{W^{2,p}}^2\,dt$ can be controlled by
\begin{align}\label{nonlinear-w-2p}
\begin{aligned}
\int_0^T\norm{\mathbf{u}}_{W^{2,p}}^2\,dt
\leq C_0\int_0^T\left(
\norm{\mathbf{u}_t}_{L^{p}}^2
+\norm{ (\mathbf{u}\cdot \nabla )\mathbf{u}}_{L^{p}}^2
+\norm{\varrho  \nabla f}_{L^p}^2
\right)\,dt,\quad 1\leq p<+\infty.
\end{aligned}
\end{align}
This deduces \eqref{estimates-1-u-w-2p}. We then read from
$\nabla \mathbf{u} \in L^{2}\left([0,T];W^{1,p}(\Omega)\right)$
with $p>2$ and \eqref{lem:sobolev-2} that
\[
\int_0^T
\norm{\nabla \mathbf{u}(t)}_{L^{\infty}}^2\,dt\leq C_T.
\]
\end{proof}

\begin{lemma}\label{lemma-rho-2p-1}
Under the condition of Lemma \ref{lemma-u_w-2p}, if 
 $\varrho_0\in H^1$, we then have
\begin{align}\label{estimates-1-tho-2p}
\begin{aligned}
\varrho\in L^{\infty}\left([0,T];H^{1}(\Omega)\right),\quad
 \varrho_t\in L^{\infty}\left([0,T];L^{2}(\Omega)\right).
\end{aligned}
\end{align}
\end{lemma}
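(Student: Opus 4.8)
The plan is to exploit the fact that the total density $\sigma := \varrho + \rho_s$ is \emph{purely transported} by the velocity field. Since $\rho_s$ is time-independent, the continuity equation $\eqref{main-peturbation}_2$ rewrites as $\partial_t \sigma + \mathbf{u}\cdot\nabla\sigma = 0$. All the $H^1$ information about $\varrho$ can then be read off from $\sigma$: by \eqref{density-bounds} we already have $\alpha_1 \le \sigma \le \alpha_2$ pointwise, so $\varrho = \sigma - \rho_s \in L^\infty((0,\infty);L^\infty(\Omega))$ for free on the bounded domain $\Omega$, and it remains only to propagate the gradient bound. Because the transport equation does not regularize, one cannot \emph{gain} derivatives here; instead $H^1$ regularity must be \emph{propagated} from the initial data through the differentiated equation.

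First I would differentiate the transport equation in space. The componentwise identity reads $\partial_t\partial_j\sigma + u_i\partial_i\partial_j\sigma + (\partial_j u_i)(\partial_i\sigma)=0$. Testing against $\partial_j\sigma$, summing in $j$, and integrating over $\Omega$, the convective contribution $\int_\Omega u_i\,\partial_i\!\bigl(\tfrac12|\nabla\sigma|^2\bigr)\,d\mathbf{x}$ vanishes after integration by parts, using $\nabla\cdot\mathbf{u}=0$ together with $\mathbf{u}\cdot\mathbf{n}=0$ on $\partial\Omega$. This yields the differential inequality
\[
\frac{1}{2}\frac{d}{dt}\norm{\nabla\sigma}_{L^2}^2 = -\int_\Omega (\partial_j u_i)(\partial_i\sigma)(\partial_j\sigma)\,d\mathbf{x} \le \norm{\nabla\mathbf{u}}_{L^\infty}\norm{\nabla\sigma}_{L^2}^2.
\]
The only genuinely nontrivial ingredient is that $\int_0^T\norm{\nabla\mathbf{u}(\tau)}_{L^\infty}\,d\tau < \infty$. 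This follows from Lemma \ref{lemma-u_w-2p}, which supplies $\nabla\mathbf{u}\in L^2([0,T];L^\infty(\Omega))$, combined with the Cauchy--Schwarz bound $\int_0^T\norm{\nabla\mathbf{u}}_{L^\infty}\,d\tau \le \sqrt{T}\,\bigl(\int_0^T\norm{\nabla\mathbf{u}}_{L^\infty}^2\,d\tau\bigr)^{1/2}$. Gronwall's inequality then gives
\[
\norm{\nabla\sigma(t)}_{L^2}^2 \le \norm{\nabla\sigma(0)}_{L^2}^2\exp\!\left(2\int_0^t\norm{\nabla\mathbf{u}(\tau)}_{L^\infty}\,d\tau\right) \le C_T
\]
uniformly on $[0,T]$. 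Since $\nabla\sigma(0)=\nabla\varrho_0+\nabla\rho_s\in L^2(\Omega)$ (using $\varrho_0\in H^1$ and that $\rho_s$ is a fixed smooth steady profile), we obtain $\nabla\varrho \in L^\infty([0,T];L^2(\Omega))$, which together with the $L^\infty$ bound on $\varrho$ gives $\varrho\in L^\infty([0,T];H^1(\Omega))$.

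Finally, the bound on $\varrho_t$ is immediate from the equation. Rewriting $\eqref{main-peturbation}_2$ as $\varrho_t = -\mathbf{u}\cdot\nabla\sigma$ and applying Hölder's inequality gives $\norm{\varrho_t}_{L^2}\le \norm{\mathbf{u}}_{L^\infty}\norm{\nabla\sigma}_{L^2}$. Here $\norm{\mathbf{u}}_{L^\infty}$ is uniformly bounded by Lemma \ref{lemma-nablau_t-l5}, and $\norm{\nabla\sigma}_{L^2}$ is bounded on $[0,T]$ by the previous step, so $\varrho_t\in L^\infty([0,T];L^2(\Omega))$, which completes the proof. I do not anticipate any serious obstacle: the whole argument reduces to a single linear transport estimate, and its closure rests entirely on the $L^2_t L^\infty_x$ control of $\nabla\mathbf{u}$ already secured in Lemma \ref{lemma-u_w-2p}. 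The only point requiring a little care is that $\nabla\mathbf{u}\in L^2_tL^\infty_x$, rather than the weaker $L^\infty_tL^2_x$, is exactly what makes $\int_0^T\norm{\nabla\mathbf{u}}_{L^\infty}\,d\tau$ finite and thus permits the Gronwall closure.
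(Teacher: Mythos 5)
Your proposal is correct and follows essentially the same route as the paper's proof: differentiate the transport equation for $\varrho+\rho_s$ in space, derive $\frac{1}{2}\frac{d}{dt}\norm{\nabla(\varrho+\rho_s)}_{L^2}^2 \leq \norm{\nabla\mathbf{u}}_{L^\infty}\norm{\nabla(\varrho+\rho_s)}_{L^2}^2$, close via Gronwall using the $L^2([0,T];L^\infty)$ control of $\nabla\mathbf{u}$ from Lemma \ref{lemma-u_w-2p}, and bound $\varrho_t$ by H\"older with $\norm{\mathbf{u}}_{L^\infty}$. Your explicit remark that Cauchy--Schwarz converts the $L^2_t L^\infty_x$ bound into $\int_0^T\norm{\nabla\mathbf{u}}_{L^\infty}\,d\tau<\infty$ is a small clarification the paper leaves implicit, but it is not a different argument.
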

\begin{proof}
For any \( p \geq 2 \), taking \(\partial_{x_j}\) of \(\eqref{main-peturbation}_2\), we get
\[
\partial_t \partial_{x_j}\varrho
+(\partial_{x_j}\mathbf{u}\cdot  \nabla)\varrho
+(\mathbf{u}\cdot  \nabla)\partial_{x_j}\varrho 
+(\partial_{x_j}\mathbf{u}\cdot  \nabla)\rho_s
+(\mathbf{u}\cdot  \nabla)\partial_{x_j}\rho_s
 =0,\quad j=1,2.
\]

Taking the dot product of the preceding equation with \( \partial_{x_j} (\varrho + \rho_s) \) and applying integration by parts, we are led to
\[
\begin{aligned}
\frac{1}{2} \frac{d}{dt} \left( \|\nabla (\varrho+\rho_s)\|_{L^2}^2 \right) \leq& \|\nabla \mathbf{u}\|_{L^\infty} \|\nabla (\varrho+\rho_s)\|_{L^2}^2,
\end{aligned}
\]
which yields  
\[
\frac{d}{dt} \left( \|\nabla (\varrho+\rho_s)\|_{L^2} \right) \leq 
\|\nabla \mathbf{u}\|_{L^\infty} \|\nabla (\varrho+\rho_s)\|_{L^2}
\]  
Gronwall’s inequality yields  
\[
\|\nabla \varrho(\cdot, t)+\nabla \rho_s\|_{L^2} \leq \|\nabla( \varrho_0+\rho_s)\|_{L^2} \exp \left\{ \int_0^T\|\nabla \mathbf{u}\|_{L^\infty}dt \right\} \leq C_T,
\] 
for $ \forall t \in [0, T]$. This gives the first part of \eqref{estimates-1-tho-2p}
by noting that
\[
\|\nabla \varrho(\cdot, t)\|_{L^2} \leq
\|\nabla \varrho(\cdot, t)+\nabla \rho_s\|_{L^2} 
+\|\nabla \rho_s\|_{L^2} \leq C_T.
\]

The second part of \eqref{estimates-1-tho-2p} is derived by the following estimate
\[
\| \varrho_t\|_{L^2}\leq
\| \mathbf{u}\cdot \nabla (\varrho+\rho_s)\|_{L^2}
\leq \| \mathbf{u}\cdot \nabla (\varrho+\rho_s)\|_{L^2}
\leq 
\|\mathbf{u}\|_{L^\infty}
\|\nabla (\varrho+\rho_s)\|_{L^2}\leq C_T.
 \]
\end{proof}

\begin{lemma}\label{lemma-rho-2p-2}
Under the condition of Lemma \ref{lemma-u_w-2p}, if 
 $\varrho_0\in W^{1,\infty}$, we then have
\begin{align}\label{estimates-1-tho-2p-2}
\begin{aligned}
\varrho\in L^{\infty}\left([0,T];W^{1,\infty}(\Omega)\right),\quad
 \varrho_t\in L^{\infty}\left([0,T];L^{\infty}(\Omega)\right).
\end{aligned}
\end{align}
\end{lemma}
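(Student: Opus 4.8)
The plan is to mimic the $H^1$ argument of Lemma \ref{lemma-rho-2p-1} but to run the energy estimate in $L^p$ and pass to the limit $p\to\infty$, so that the only extra ingredient beyond the $H^1$ case is the control of $\int_0^T\|\nabla\mathbf{u}\|_{L^\infty}\,dt$ already furnished by Lemma \ref{lemma-u_w-2p}. Writing $\sigma:=\varrho+\rho_s$ and using $\partial_t\rho_s=0$, the continuity equation $\eqref{main-peturbation}_2$ becomes the pure transport law $\partial_t\sigma+\mathbf{u}\cdot\nabla\sigma=0$. Differentiating in $x_j$ gives, for $j=1,2$,
\[
\partial_t\partial_{x_j}\sigma+\mathbf{u}\cdot\nabla\partial_{x_j}\sigma=-(\partial_{x_j}\mathbf{u}\cdot\nabla)\sigma,
\]
so that the gradient $\nabla\sigma$ is transported by $\mathbf{u}$ and stretched by $\nabla\mathbf{u}$. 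This is exactly the structure exploited in the $L^2$ estimate of Lemma \ref{lemma-rho-2p-1}; the point is that the stretching term is linear in $\nabla\sigma$ with coefficient pointwise controlled by $\|\nabla\mathbf{u}\|_{L^\infty}$.

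Next I would carry out the $L^p$ energy estimate. Multiplying the evolution equation for $|\nabla\sigma|^2$ by $|\nabla\sigma|^{p-2}$ and integrating over $\Omega$, the transport term drops out after integration by parts because $\nabla\cdot\mathbf{u}=0$ and $\mathbf{u}|_{\partial\Omega}=\mathbf{0}$, while the stretching term is bounded by $\|\nabla\mathbf{u}\|_{L^\infty}\|\nabla\sigma\|_{L^p}^p$. This yields
\[
\frac{d}{dt}\|\nabla\sigma(\cdot,t)\|_{L^p}\le\|\nabla\mathbf{u}(\cdot,t)\|_{L^\infty}\,\|\nabla\sigma(\cdot,t)\|_{L^p},
\]
and Gronwall's inequality gives
\[
\|\nabla\sigma(\cdot,t)\|_{L^p}\le\|\nabla\sigma_0\|_{L^p}\exp\!\Big(\int_0^t\|\nabla\mathbf{u}(\cdot,s)\|_{L^\infty}\,ds\Big).
\]
The crucial observation is that the Gronwall constant is independent of $p$. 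By Lemma \ref{lemma-u_w-2p} we have $\nabla\mathbf{u}\in L^2([0,T];L^\infty(\Omega))$, whence by Cauchy--Schwarz $\int_0^T\|\nabla\mathbf{u}\|_{L^\infty}\,ds\le\sqrt{T}\,\big(\int_0^T\|\nabla\mathbf{u}\|_{L^\infty}^2\,ds\big)^{1/2}\le C_T$. Letting $p\to\infty$, using $\|\nabla\sigma_0\|_{L^p}\to\|\nabla\sigma_0\|_{L^\infty}$ on the bounded domain $\Omega$ (here $\nabla\sigma_0=\nabla\varrho_0+\nabla\rho_s\in L^\infty$ since $\varrho_0\in W^{1,\infty}$ and $\rho_s\in W^{1,\infty}$), then gives $\|\nabla\sigma(\cdot,t)\|_{L^\infty}\le C_T$ for all $t\in[0,T]$. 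Since $\nabla\varrho=\nabla\sigma-\nabla\rho_s$ and $\sigma$ is pointwise bounded by \eqref{density-bounds}, this proves $\varrho\in L^\infty([0,T];W^{1,\infty}(\Omega))$.

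Finally, the bound on $\varrho_t$ follows immediately: the continuity equation reads $\varrho_t=-\mathbf{u}\cdot\nabla\sigma$, so that $\|\varrho_t\|_{L^\infty}\le\|\mathbf{u}\|_{L^\infty}\|\nabla\sigma\|_{L^\infty}$, and the right-hand side is bounded by $C_T$ using the uniform bound $\mathbf{u}\in L^\infty((0,\infty);L^\infty(\Omega))$ from Lemma \ref{lemma-nablau_t-l5} together with the gradient bound just obtained. I do not expect a genuine obstacle here: the whole argument is a routine $L^p$ variant of Lemma \ref{lemma-rho-2p-1}, and the only substantive input is the time-integrability $\int_0^T\|\nabla\mathbf{u}\|_{L^\infty}\,dt<\infty$, which is precisely the content of Lemma \ref{lemma-u_w-2p}. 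The mildly delicate point is keeping the Gronwall constant uniform in $p$ so that the passage $p\to\infty$ is legitimate.
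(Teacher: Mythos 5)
Your proposal is correct and follows essentially the same route as the paper: differentiate the transport equation, perform the $L^p$ energy estimate with the transport term vanishing by incompressibility, apply Gronwall with a constant independent of $p$ (using $\nabla\mathbf{u}\in L^2([0,T];L^\infty(\Omega))$ from Lemma \ref{lemma-u_w-2p}), let $p\to\infty$, and then bound $\varrho_t$ via $\|\mathbf{u}\|_{L^\infty}\|\nabla(\varrho+\rho_s)\|_{L^\infty}$. Your explicit remarks on the $p$-uniformity of the Gronwall constant and the Cauchy--Schwarz step in time are exactly the (implicit) content of the paper's argument.
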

\begin{proof}
For any \( p \geq 2 \), taking \(\partial_{x_j}\) of \(\eqref{main-peturbation}_2\), we get
\[
\partial_t \partial_{x_j}\varrho
+(\partial_{x_j}\mathbf{u}\cdot  \nabla)\varrho
+(\mathbf{u}\cdot  \nabla)\partial_{x_j}\varrho 
+(\partial_{x_j}\mathbf{u}\cdot  \nabla)\rho_s
+(\mathbf{u}\cdot  \nabla)\partial_{x_j}\rho_s
 =0,\quad j=1,2.
\]

Taking the dot product of the preceding equation with \(|\nabla (\varrho + \rho_s)|^{p-2} \partial_{x_j} (\varrho + \rho_s)\) and applying integration by parts, we are led to
\[
\begin{aligned}
\frac{1}{p} \frac{d}{dt} \left( \|\nabla (\varrho+\rho_s)\|_{L^p}^p \right) \leq& \|\nabla \mathbf{u}\|_{L^\infty} \|\nabla (\varrho+\rho_s)\|_{L^p}^p,
\end{aligned}
\]
which yields  
\[
\frac{d}{dt} \left( \|\nabla (\varrho+\rho_s)\|_{L^p} \right) \leq 
\|\nabla \mathbf{u}\|_{L^\infty} \|\nabla (\varrho+\rho_s)\|_{L^p}
\]  
Gronwall’s inequality yields  
\[
\|\nabla \varrho(\cdot, t)+\nabla \rho_s\|_{L^p} \leq \|\nabla( \varrho_0+\rho_s)\|_{L^p} \exp \left\{ \int_0^T\|\nabla \mathbf{u}\|_{L^\infty}dt \right\} \leq C_T,
\] 
for $ \forall p \geq 2, \text{ and } \forall t \in [0, T]$.
Letting \( p \to \infty \) we obtain the first part of \eqref{estimates-1-tho-2p-2}
by noting that
\[
\|\nabla \varrho(\cdot, t)\|_{L^{\infty}} \leq
\|\nabla \varrho(\cdot, t)+\nabla \rho_s\|_{L^{\infty}} 
+\|\nabla \rho_s\|_{L^{\infty}}\leq C_T,\quad \forall t \in [0, T].
\]

The second part of \eqref{estimates-1-tho-2p} is derived by the following estimate
\[
\| \varrho_t\|_{L^{\infty}}\leq
\| \mathbf{u}\cdot \nabla (\varrho+\rho_s)\|_{L^{\infty}}
\leq 
\|\mathbf{u}\|_{L^\infty}
\|\nabla (\varrho+\rho_s)\|_{L^{\infty}}\leq C_T,\quad \forall t \in [0, T].
 \]
\end{proof}

\subsection{Estimates for 
$\norm{\nabla \mathbf{u}_t}_{ L^{\infty}\left([0,T];L^{2}(\Omega)\right)}$ 
and
$\norm{\mathbf{u}_{tt}}_{ L^{2}\left([0,T];L^{2}(\Omega)\right)}$ 
}

\begin{lemma}\label{lemma-u-tt-34-1}
Let $(\mathbf{u}, \varrho)$ be a solution of the problem \eqref{main-peturbation} subject to the condition \eqref{main-perturbation-2}, with initial data  $(\mathbf{u}_0,\varrho_0)\in H^3\left(\Omega\right)$.
If $\nabla f \in  W^{1,\infty}\left(\Omega\right)$, we have
\begin{align}\label{estimates-1-h-34}
\begin{aligned}
 \nabla \mathbf{u}_t\in L^{\infty}\left([0,T];L^{2}(\Omega)\right),\quad
 \mathbf{u}_{tt}
 \in L^{2}\left([0,T];L^{2}(\Omega)\right).
\end{aligned}
\end{align}
\end{lemma}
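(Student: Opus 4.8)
The plan is to obtain both estimates from a single energy inequality for $\mathbf{u}_t$, derived by testing the time-differentiated momentum equation \eqref{utttt} against $\mathbf{u}_{tt}$ (rather than against $\mathbf{u}_t$, as in \autoref{lemma-nablau_t-l3}). Integrating over $\Omega$, the first term on the left of \eqref{utttt} produces $\norm{\sqrt{\varrho+\rho_s}\,\mathbf{u}_{tt}}_{L^2}^2$, while the viscous term yields $\nu\int_\Omega \Delta\mathbf{u}_t\cdot\mathbf{u}_{tt}\,d\mathbf{x}=-\tfrac{\nu}{2}\tfrac{d}{dt}\norm{\nabla\mathbf{u}_t}_{L^2}^2$, which is precisely the dissipation/energy pair we want. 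The pressure contribution $-\int_\Omega \nabla P_t\cdot\mathbf{u}_{tt}\,d\mathbf{x}$ vanishes: differentiating $\nabla\cdot\mathbf{u}=0$ and $\mathbf{u}|_{\partial\Omega}=\mathbf{0}$ twice in time gives $\nabla\cdot\mathbf{u}_{tt}=0$ in $\Omega$ and $\mathbf{u}_{tt}|_{\partial\Omega}=\mathbf{0}$, so that integral is zero after integration by parts. This leaves an identity of the form $\tfrac{\nu}{2}\tfrac{d}{dt}\norm{\nabla\mathbf{u}_t}_{L^2}^2+\norm{\sqrt{\varrho+\rho_s}\,\mathbf{u}_{tt}}_{L^2}^2=\sum_k R_k$, where the $R_k$ are the remaining convective and $\varrho_t$ terms, each carrying one factor of $\mathbf{u}_{tt}$.

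Each $R_k$ is then estimated by Hölder and Young's inequalities so as to absorb a small multiple of $\norm{\sqrt{\varrho+\rho_s}\,\mathbf{u}_{tt}}_{L^2}^2$ into the left-hand side. Writing $\varrho_t=-\mathbf{u}\cdot\nabla(\varrho+\rho_s)$ and invoking \autoref{lemma-rho-2p-2} (applicable since $H^3(\Omega)\hookrightarrow W^{1,\infty}(\Omega)$ in two dimensions), the terms containing $\varrho_t$ are controlled by $\norm{\varrho_t}_{L^\infty}$ together with the uniform bounds on $\norm{\mathbf{u}}_{L^\infty}$, $\norm{\nabla\mathbf{u}}_{L^2}$ and $\norm{\mathbf{u}_t}_{L^2}$ from \autoref{lemma-nablau-l2}, \autoref{lemma-nablau_t-l3} and \autoref{lemma-nablau_t-l5}, and are hence bounded by $\epsilon\norm{\mathbf{u}_{tt}}_{L^2}^2+C_T$. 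The genuinely nonlinear terms are $\int_\Omega(\varrho+\rho_s)(\mathbf{u}\cdot\nabla)\mathbf{u}_t\cdot\mathbf{u}_{tt}\,d\mathbf{x}$ and $\int_\Omega(\varrho+\rho_s)(\mathbf{u}_t\cdot\nabla)\mathbf{u}\cdot\mathbf{u}_{tt}\,d\mathbf{x}$. The first is bounded using $\norm{\mathbf{u}}_{L^\infty}\le C_0$ by $\epsilon\norm{\sqrt{\varrho+\rho_s}\,\mathbf{u}_{tt}}_{L^2}^2+C_0\norm{\nabla\mathbf{u}_t}_{L^2}^2$. For the second I would use the two-dimensional Gagliardo–Nirenberg inequality $\norm{\mathbf{u}_t}_{L^4}\le C\norm{\mathbf{u}_t}_{L^2}^{1/2}\norm{\nabla\mathbf{u}_t}_{L^2}^{1/2}+C\norm{\mathbf{u}_t}_{L^2}$ together with $\norm{\nabla\mathbf{u}}_{L^4}\le C\norm{\nabla\mathbf{u}}_{L^\infty}$, so that after Young's inequality it is bounded by $\epsilon\norm{\mathbf{u}_{tt}}_{L^2}^2+C_0\big(1+\norm{\nabla\mathbf{u}}_{L^\infty}^2\big)\norm{\nabla\mathbf{u}_t}_{L^2}^2+C_0\norm{\nabla\mathbf{u}}_{L^\infty}^2$. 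Collecting everything gives a differential inequality $\tfrac{d}{dt}\norm{\nabla\mathbf{u}_t}_{L^2}^2+\norm{\sqrt{\varrho+\rho_s}\,\mathbf{u}_{tt}}_{L^2}^2\le a(t)\norm{\nabla\mathbf{u}_t}_{L^2}^2+b(t)$ with $a(t)=C_0(1+\norm{\nabla\mathbf{u}}_{L^\infty}^2)$ and $\int_0^T(a+b)\,dt\le C_T$; here the crucial fact is that $\nabla\mathbf{u}\in L^2([0,T];L^\infty(\Omega))$ by \autoref{lemma-u_w-2p}, so $a\in L^1([0,T])$ and Gronwall's inequality applies.

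To close the argument I would bound the initial datum $\norm{\nabla\mathbf{u}_t(0)}_{L^2}$. Evaluating \eqref{main-peturbation} at $t=0$ and solving for $\mathbf{u}_t(0)$ gives $\mathbf{u}_t(0)=(\varrho_0+\rho_s)^{-1}\big(\nu\Delta\mathbf{u}_0-\nabla P_0-\varrho_0\nabla f-(\varrho_0+\rho_s)(\mathbf{u}_0\cdot\nabla)\mathbf{u}_0\big)$, where $P_0$ solves the elliptic problem specified in \autoref{theorem-1}(3). Since $(\mathbf{u}_0,\varrho_0)\in H^3(\Omega)$, elliptic regularity for this Neumann-type problem yields $\nabla P_0\in H^1(\Omega)$, and the lower bound $\varrho_0+\rho_s\ge\alpha_1>0$ together with $H^3(\Omega)\hookrightarrow W^{1,\infty}(\Omega)$ makes $(\varrho_0+\rho_s)^{-1}$ an $H^1\cap L^\infty$ multiplier, so all the products lie in $H^1(\Omega)$; the compatibility condition $(\nu\Delta\mathbf{u}_0-\nabla P_0-\varrho_0\nabla f)|_{\partial\Omega}=\mathbf{0}$ guarantees consistency of $\mathbf{u}_t(0)$ with the no-slip boundary condition. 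Hence $\mathbf{u}_t(0)\in H^1(\Omega)$ with $\norm{\nabla\mathbf{u}_t(0)}_{L^2}\le C_0$, and Gronwall's inequality then produces $\sup_{[0,T]}\norm{\nabla\mathbf{u}_t}_{L^2}^2+\int_0^T\norm{\sqrt{\varrho+\rho_s}\,\mathbf{u}_{tt}}_{L^2}^2\,dt\le C_T$, which is exactly \eqref{estimates-1-h-34} after using $\varrho+\rho_s\ge\alpha_1$ to replace the weighted norm of $\mathbf{u}_{tt}$.

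I expect the main obstacle to be the convective term $\int_\Omega(\varrho+\rho_s)(\mathbf{u}_t\cdot\nabla)\mathbf{u}\cdot\mathbf{u}_{tt}\,d\mathbf{x}$: it forces the Gronwall multiplier $a(t)$ to contain $\norm{\nabla\mathbf{u}}_{L^\infty}^2$, and the scheme succeeds only because this quantity is merely time-integrable (not uniformly bounded in $t$), which is precisely why the conclusion is stated on finite intervals $[0,T]$ with $T$-dependent constants rather than uniformly in time. A secondary difficulty lies in the initial bound, where the $H^3$ hypothesis and the compatibility conditions are genuinely needed, entering through the elliptic regularity of $P_0$.
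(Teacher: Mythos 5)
Your proposal is correct and follows essentially the same route as the paper's proof: both test the time-differentiated momentum equation \eqref{utttt} against $\mathbf{u}_{tt}$, eliminate the pressure term via $\nabla\cdot\mathbf{u}_{tt}=0$ and $\mathbf{u}_{tt}|_{\partial\Omega}=\mathbf{0}$, rewrite $\varrho_t=-\mathbf{u}\cdot\nabla(\varrho+\rho_s)$ so that Lemma \ref{lemma-rho-2p-2}, the uniform bounds of Lemmas \ref{lemma-nablau-l2}--\ref{lemma-nablau_t-l5}, and the integrability $\nabla\mathbf{u}\in L^2([0,T];L^\infty(\Omega))$ from Lemma \ref{lemma-u_w-2p} close the resulting differential inequality for $\norm{\nabla\mathbf{u}_t}_{L^2}^2$. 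The only deviations are minor: you invoke Gronwall with $a(t)=C_0\left(1+\norm{\nabla\mathbf{u}}_{L^\infty}^2\right)\in L^1([0,T])$ where the paper instead observes that every term on the right of \eqref{neee-2}, including $C_T\norm{\nabla\mathbf{u}_t}_{L^2}^2$, is already time-integrable by Lemma \ref{lemma-nablau_t-l3} and integrates directly, and your explicit control of $\norm{\nabla\mathbf{u}_t(0)}_{L^2}$ through the elliptic problem for $P_0$ and the compatibility condition makes rigorous a step the paper leaves implicit.
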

\begin{proof}
Taking the $L^2$ inner product of the evolution equation for $\mathbf{u}_t$ \eqref{utttt} with $\mathbf{u}_{tt}$ and using the density equation $\eqref{main-peturbation}_2$, we obtain
\begin{align}\label{neee}
\begin{aligned}
&\frac{\nu}{2}\frac{d}{dt}
\norm{\nabla \mathbf{u}_t}_{L^2}^2
+\norm{\sqrt{\varrho+\rho_s}\mathbf{u}_{tt}}_{L^2}^2
\\&\leq 
\int_{\Omega}
\left(-\mathbf{u}  \cdot \nabla(\varrho+\rho_s)\right)
\mathbf{u}_t\cdot \mathbf{u}_{tt}\,d\mathbf{x}+
\int_{\Omega}
\left(-\mathbf{u} \cdot \nabla (\varrho+\rho_s)\right)
(\mathbf{u} \cdot \nabla )\mathbf{u}\cdot \mathbf{u}_{tt}\,d\mathbf{x}\\
&\quad+
\int_{\Omega}
\left(\varrho+\rho_s\right)(\mathbf{u}_t \cdot \nabla )\mathbf{u}\cdot \mathbf{u}_{tt}\,d\mathbf{x}
+\int_{\Omega}
\left(-\mathbf{u} \cdot \nabla (\varrho+\rho_s)\right)
\nabla f\cdot \mathbf{u}_{tt}\,d\mathbf{x}\\
&\quad-\int_{\Omega}
\left(\varrho+\rho_s\right)(\mathbf{u} \cdot \nabla )\mathbf{u}_t\cdot \mathbf{u}_{tt}\,d\mathbf{x}:=S_1+S_2+S_3+S_4+S_5.
\end{aligned}
\end{align}
We now provide estimates for these terms \( S_1- S_5 \) as follows:\begin{align*}
&\begin{aligned}
S_1&=
\int_{\Omega}
\left(-\mathbf{u} \nabla \cdot (\varrho+\rho_s)\right)
\mathbf{u}_t\cdot \mathbf{u}_{tt}\,d\mathbf{x}
\\&\leq \frac{1}{10}
\norm{\sqrt{\varrho+\rho_s}\mathbf{u}_{tt}}_{L^2}^2+
C_0\norm{\mathbf{u}}_{L^{\infty}}  \norm{\nabla(\varrho+\rho_s)}
_{L^{\infty}} \norm{\nabla \mathbf{u}_t}_{L^{2}} 
\\&\leq \frac{1}{10}
\norm{\sqrt{\varrho+\rho_s}\mathbf{u}_{tt}}_{L^2}^2+
C_T\norm{\nabla \mathbf{u}_t}_{L^{2}},
\end{aligned}\\
&\begin{aligned}
S_2&=\int_{\Omega}
\left(-\mathbf{u} \cdot \nabla (\varrho+\rho_s)\right)
(\mathbf{u} \cdot \nabla )\mathbf{u}\cdot \mathbf{u}_{tt}\,d\mathbf{x}
\\&\leq \frac{1}{10}
\norm{\sqrt{\varrho+\rho_s}\mathbf{u}_{tt}}_{L^2}^2+
C_0\norm{\mathbf{u}}_{L^{\infty}}  \norm{\nabla(\varrho+\rho_s)}
_{L^{\infty}} \norm{(\mathbf{u} \cdot \nabla )\mathbf{u}}_{L^{2}} 
\\&\leq \frac{1}{10}
\norm{\sqrt{\varrho+\rho_s}\mathbf{u}_{tt}}_{L^2}^2+
C_0 \norm{\nabla(\varrho+\rho_s)}_{L^{\infty}} 
\\&\leq \frac{1}{10}
\norm{\sqrt{\varrho+\rho_s}\mathbf{u}_{tt}}_{L^2}^2+
C_T,
\end{aligned}\\
&\begin{aligned}
S_3&=\int_{\Omega}
\left(\varrho+\rho_s\right)(\mathbf{u}_t \cdot \nabla )\mathbf{u}\cdot \mathbf{u}_{tt}\,d\mathbf{x}
\\&\leq \frac{1}{10}
\norm{\sqrt{\varrho+\rho_s}\mathbf{u}_{tt}}_{L^2}^2
+C\norm{\nabla \mathbf{u}}_{L^{\infty}}
\norm{\sqrt{\varrho+\rho_s}\mathbf{u}_t}_{L^2}^2
\\&\leq \frac{1}{10}
\norm{\sqrt{\varrho+\rho_s}\mathbf{u}_{tt}}_{L^2}^2
+C_0\norm{\nabla \mathbf{u}}_{L^{\infty}}
\end{aligned}\\
&\begin{aligned}
S_4&=\int_{\Omega}
\left(-\mathbf{u} \cdot \nabla (\varrho+\rho_s)\right)
\nabla f\cdot \mathbf{u}_{tt}\,d\mathbf{x}
\\&\leq \frac{1}{10}
\norm{\sqrt{\varrho+\rho_s}\mathbf{u}_{tt}}_{L^2}^2
+C\norm{\mathbf{u}}_{L^{\infty}}
\norm{\nabla f}_{L^{\infty}}
 \norm{\nabla(\varrho+\rho_s)}_{L^{2}}^2
\\&\leq \frac{1}{10}
\norm{\sqrt{\varrho+\rho_s}\mathbf{u}_{tt}}_{L^2}^2
+C_T
\end{aligned}\\
&\begin{aligned}
S_5&=-\int_{\Omega}
\left(\varrho+\rho_s\right)(\mathbf{u} \cdot \nabla )\mathbf{u}_t\cdot \mathbf{u}_{tt}\,d\mathbf{x}
\\&\leq \frac{1}{10}
\norm{\sqrt{\varrho+\rho_s}\mathbf{u}_{tt}}_{L^2}^2
+C\norm{\mathbf{u}}_{L^{\infty}}
 \norm{\nabla \mathbf{u}_t}_{L^{2}}^2
\\&\leq \frac{1}{10}
\norm{\sqrt{\varrho+\rho_s}\mathbf{u}_{tt}}_{L^2}^2
+C_0 \norm{\nabla \mathbf{u}_t}_{L^{2}}^2.
\end{aligned}
\end{align*}

Substituting the preceding all estimates back into \eqref{neee}, we have
\begin{align}\label{neee-2}
\begin{aligned}
\frac{\nu}{2}\frac{d}{dt}
\norm{\nabla \mathbf{u}_t}_{L^2}^2
+\frac{1}{2}\norm{\sqrt{\varrho+\rho_s}\mathbf{u}_{tt}}_{L^2}^2
\leq C_T \norm{\nabla \mathbf{u}_t}_{L^{2}}^2
+C_T+C_0\norm{\nabla \mathbf{u}}_{L^{\infty}}.
\end{aligned}
\end{align}
Note that all the terms on the right-hand side of \eqref{neee-2} are integrable in time due to Lemma \ref{lemma-nablau_t-l3}-\ref{lemma-u_w-2p}.
We then we integrate \eqref{neee-2} in time over $[0, T ]$
to obtain the results in \eqref{estimates-1-h-34}.
\end{proof}

\subsection{Estimates for 
$\norm{\mathbf{u}}_{ L^{2}\left([0,T];H^{4}(\Omega)\right)}$ }
\begin{lemma}\label{lemma-u-h-34}
Let $(\mathbf{u}, \varrho)$ be a solution of the problem \eqref{main-peturbation} subject to the condition \eqref{main-perturbation-2}, with initial data $(\mathbf{u}_0,\varrho_0)\in H^3\left(\Omega\right)$.
If $\nabla f \in  W^{2,\infty}\left(\Omega\right)$, we have
\begin{align}\label{estimates-1-h-34}
\begin{aligned}
 (\rho,\mathbf{u}) \in C\left([0,T];H^{3}(\Omega)\right),\quad
\mathbf{u} 
 \in L^{2}\left([0,T];H^{4}(\Omega)\right).
\end{aligned}
\end{align}
\end{lemma}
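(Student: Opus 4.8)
The plan is to run an elliptic bootstrap based on the Stokes regularity theory, coupled with a transport-type energy estimate for the density, carried out in a carefully chosen order so as to break the apparent circular dependence between the $H^4$-regularity of $\mathbf{u}$ and the $H^3$-regularity of $\varrho$. The starting point is to freeze time and read the momentum equation $\eqref{main-peturbation}_1$ as a stationary Stokes system
\[
-\nu\Delta\mathbf{u}+\nabla P=\mathbf{G},\qquad \nabla\cdot\mathbf{u}=0,\qquad \mathbf{u}|_{\partial\Omega}=\mathbf{0},
\]
with forcing $\mathbf{G}=-(\varrho+\rho_s)\mathbf{u}_t-(\varrho+\rho_s)(\mathbf{u}\cdot\nabla)\mathbf{u}-\varrho\nabla f$, so that the Stokes estimate (Lemma \ref{lem:stokes}) gives $\norm{\mathbf{u}}_{H^{s+2}}+\norm{\nabla P}_{H^{s}}\leq C\norm{\mathbf{G}}_{H^{s}}$ for $s=1,2$.

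First I would establish $\mathbf{u}\in L^{\infty}((0,T);H^3)$ by applying this estimate with $s=1$. Every factor needed to bound $\norm{\mathbf{G}}_{H^1}$ is already available: $\varrho+\rho_s\in L^{\infty}((0,T);W^{1,\infty})$ from Lemma \ref{lemma-rho-2p-2}, $\mathbf{u}_t\in L^{\infty}((0,T);H^1)$ from Lemma \ref{lemma-u-tt-34-1}, and $\mathbf{u}\in L^{\infty}((0,T);H^2)\cap L^{\infty}((0,T);L^{\infty})$ from \eqref{H-2} and Lemma \ref{lemma-nablau_t-l5}, so that $(\mathbf{u}\cdot\nabla)\mathbf{u}\in L^{\infty}((0,T);H^1)$; together with $\nabla f\in W^{2,\infty}$ these yield $\mathbf{G}\in L^{\infty}((0,T);H^1)$. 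Crucially this step requires only $\varrho\in H^1\cap W^{1,\infty}$, not $\varrho\in H^2$, so no circularity arises. With $\mathbf{u}\in L^{\infty}((0,T);H^3)$ in hand, I would propagate $H^3$-regularity of the density by applying $\partial^{\alpha}$ ($\abs{\alpha}=3$) to $\eqref{main-peturbation}_2$ and performing an $L^2$ energy estimate. The commutator $[\partial^{\alpha},\mathbf{u}\cdot\nabla]\varrho$ is controlled by $\norm{\nabla\mathbf{u}}_{L^{\infty}}\norm{\nabla^3\varrho}_{L^2}+\norm{\nabla^3\mathbf{u}}_{L^2}\norm{\nabla\varrho}_{L^{\infty}}$ plus interpolated middle terms, where $\norm{\nabla\mathbf{u}}_{L^{\infty}}\in L^2(0,T)$ by Lemma \ref{lemma-u_w-2p}, $\norm{\nabla\varrho}_{L^{\infty}}\in L^{\infty}(0,T)$ by Lemma \ref{lemma-rho-2p-2}, and $\norm{\nabla^3\mathbf{u}}_{L^2}\in L^{\infty}(0,T)$ from the previous step; Gronwall's inequality then gives $\varrho\in L^{\infty}((0,T);H^3)$.

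Next I would upgrade the velocity to $\mathbf{u}\in L^2((0,T);H^4)$ via the Stokes estimate with $s=2$, for which the only missing ingredient is $\mathbf{u}_t\in L^2((0,T);H^2)$. This I would obtain by applying the Stokes estimate with $s=0$ to the differentiated equation \eqref{utttt}, rewritten as $-\nu\Delta\mathbf{u}_t+\nabla P_t=\mathbf{H}$ with $\nabla\cdot\mathbf{u}_t=0$ and $\mathbf{u}_t|_{\partial\Omega}=\mathbf{0}$; every term of $\mathbf{H}$ lies in $L^2((0,T);L^2)$, the only one requiring the time integral being $(\varrho+\rho_s)\mathbf{u}_{tt}$, controlled by $\mathbf{u}_{tt}\in L^2((0,T);L^2)$ from Lemma \ref{lemma-u-tt-34-1}, while $\varrho_t\in L^{\infty}((0,T);L^{\infty})$ (Lemma \ref{lemma-rho-2p-2}) handles the remaining contributions. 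Feeding $\mathbf{u}_t\in L^2((0,T);H^2)$, $\varrho\in L^{\infty}((0,T);H^3)$ and $\mathbf{u}\in L^{\infty}((0,T);H^3)$ into $\norm{\mathbf{G}}_{H^2}$, and using the product rule in $H^2$ (valid in two dimensions since $H^3\hookrightarrow W^{1,\infty}$ and $\nabla^2(\cdot)\in L^4$), yields $\mathbf{G}\in L^2((0,T);H^2)$ and hence $\mathbf{u}\in L^2((0,T);H^4)$. Finally, the continuity statements $(\varrho,\mathbf{u})\in C([0,T];H^3)$ would follow from the Lions--Magenes interpolation lemma: for $\mathbf{u}$, from $\mathbf{u}\in L^2((0,T);H^4)$ together with $\mathbf{u}_t\in L^2((0,T);H^2)$; for $\varrho$, from $\varrho\in L^{\infty}((0,T);H^3)$ and $\varrho_t=-\mathbf{u}\cdot\nabla(\varrho+\rho_s)\in L^{\infty}((0,T);H^2)$, combined with the transport structure to pass from weak to strong continuity in the top norm.

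The main obstacle, and the place where the argument must be sequenced with care, is the coupling through the terms $\varrho\nabla f$ and the density commutator: a naive attempt to close an $H^4$ estimate for $\mathbf{u}$ directly would demand $\varrho\in H^2$, while the $H^3$ estimate for $\varrho$ appears to call for control of $\nabla^3\mathbf{u}$. The resolution rests on the observation that the first Stokes step needs only $\varrho\in W^{1,\infty}$ and that the density commutator needs only $\mathbf{u}\in H^3$ (not $H^4$), so the two top-order estimates decouple into the order $\mathbf{u}\in L^{\infty}((0,T);H^3)\Rightarrow\varrho\in L^{\infty}((0,T);H^3)\Rightarrow\mathbf{u}\in L^2((0,T);H^4)$. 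A secondary technical difficulty is verifying that all the product and commutator estimates genuinely close in two dimensions --- in particular interpolating the middle commutator term $\norm{\nabla^2\mathbf{u}}_{L^4}\norm{\nabla^2\varrho}_{L^4}$ against $\norm{\nabla^3\varrho}_{L^2}$ before applying Young's inequality --- and establishing the delicate time-continuity of the transported density at the level of its highest derivatives.
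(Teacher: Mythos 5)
Your proposal is correct and follows essentially the same route as the paper's proof: the identical bootstrap sequencing of first obtaining $\mathbf{u}\in L^{\infty}([0,T];H^{3})$ from the Stokes estimate with forcing $\mathbf{G}=-(\varrho+\rho_s)\mathbf{u}_t-(\varrho+\rho_s)(\mathbf{u}\cdot\nabla)\mathbf{u}-\varrho\nabla f$ (using only $\varrho\in H^1\cap W^{1,\infty}$ and $\mathbf{u}_t\in L^{\infty}H^1$), then propagating top-order density regularity by transport estimates with Gronwall, then deriving $\mathbf{u}_t\in L^{2}([0,T];H^{2})$ by applying Stokes regularity to the time-differentiated momentum equation with $\mathbf{u}_{tt}\in L^{2}L^{2}$, and finally closing $\mathbf{u}\in L^{2}([0,T];H^{4})$ via the Stokes estimate at one order higher. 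The only differences are cosmetic: you estimate $\varrho$ in $H^{3}$ by a single $L^{2}$ commutator energy estimate where the paper runs $L^{p}$ Gronwall bounds on $\nabla^{2}(\varrho+\rho_s)$ and $\nabla^{3}(\varrho+\rho_s)$ separately, and your explicit Lions--Magenes argument for the continuity statements is in fact more careful than the paper, which merely asserts $C([0,T];H^{3})$ from the uniform bounds.
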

\begin{proof}
Based on Lemma \ref{lem:stokes}, we have
\begin{align}\label{lemma-u-h-34-proof-1}
\begin{aligned}
\norm{\mathbf{u}}_{H^3}
\leq C_0\norm{\varrho}_{H^1}
+C_T\norm{\mathbf{u}_t}_{H^1}
+C_T\norm{(\mathbf{u} \cdot \nabla )\mathbf{u}}_{H^1}
\leq C_T,\quad \forall t \in [0, T].
\end{aligned}
\end{align}
This implies $\mathbf{u} \in C\left([0,T];H^{3}(\Omega)\right)$.
Note that based on Lemma \ref{lem:stokes}, we have
\begin{align}\label{h-4}
\begin{aligned}
\int_0^T\norm{\mathbf{u}}_{H^4}^2\,dt\leq &
C_T\int_0^T\norm{\mathbf{u}_{t}}_{H^2}^2\,dt
+\int_0^T\norm{\nabla^2(\varrho+\rho_s)}_{L^4}^2
\norm{\nabla\mathbf{u}_{t}}_{L^2}^2
\,dt\\
&+
C_T\int_0^T\norm{(\mathbf{u} \cdot \nabla )\mathbf{u}}_{H^2}^2\,dt
+C
\int_0^T\norm{\varrho}_{H^2}^2\,dt
\\&+\int_0^T\norm{\nabla^2(\varrho+\rho_s)}_{L^4}^2
\norm{(\mathbf{u} \cdot \nabla )\mathbf{u}}_{L^4}^2\,dt\\
\leq &C_T
\bigg(
\int_0^T\norm{\mathbf{u}_{t}}_{H^2}^2\,dt
+\int_0^T\norm{(\mathbf{u} \cdot \nabla )\mathbf{u}}_{H^2}^2\,dt
\\&+\int_0^T\norm{\nabla^2(\varrho+\rho_s)}_{L^4}^2
\,dt
+\int_0^T\norm{\varrho}_{H^2}^2\,dt
\bigg)\\=&C_T(L_1+L_2+L_3+L_4).
\end{aligned}
\end{align}

We begin by estimating the terms \( L_1 \) and \( L_2 \). Appealing to the Stokes estimate in Lemma \ref{lem:stokes} and equation \eqref{utttt}, we obtain
\begin{align*}
\begin{aligned}
L_1=\int_0^T\norm{\mathbf{u}_{t}}_{H^2}^2\,dt
\leq &
C_0\int_0^T\norm{\mathbf{u}_{tt}}_{L^2}^2\,dt
+\int_0^T
\norm{\varrho_{t}}_{L^{\infty}}
\norm{\mathbf{u}_{t}}_{L^2}^2\,dt\\
&+
\int_0^T
\norm{\mathbf{u}}_{L^{\infty}}
\norm{\nabla\mathbf{u}}_{L^{\infty}}
\norm{\mathbf{\varrho}_{t}}_{L^2}^2\,dt+C_0
\int_0^T
\norm{\nabla \mathbf{u}}_{L^{\infty}}
\norm{\mathbf{u}_t}_{L^2}^2\,dt\\
&+C_0
\int_0^T
\norm{\mathbf{u}}_{L^{\infty}}
\norm{ \nabla \mathbf{u}_t}_{L^2}^2\,dt
+C\int_0^T
\norm{\mathbf{\varrho}_{t}}_{L^2}^2\,dt,
\end{aligned}
\end{align*}
where we have used the following inequality
\begin{align}\label{lemma-u-h-34-proof-2}
\begin{aligned}
\norm{\nabla \mathbf{u}}_{L^{\infty}}
\leq C\norm{\mathbf{u}}_{W^{2,p}}
\leq C\norm{\mathbf{u}}_{H^3}\leq C_T,\quad \forall t \in [0, T].
\end{aligned}
\end{align}
This, together with Lemma \ref{lemma-u_w-2p}-Lemma 
\ref{lemma-u-tt-34-1}, we have 
\begin{align}\label{LL-1}
L_1\leq C_T.
\end{align}
Using \eqref{H-2} and \eqref{lemma-u-h-34-proof-1}, we have
\begin{align}\label{LL-2}
\begin{aligned}
L_2=\int_0^T\norm{(\mathbf{u} \cdot \nabla )\mathbf{u}}_{H^2}^2\,dt
\leq \int_0^T\norm{\mathbf{u}}_{H^2}^2
\norm{\mathbf{u}}_{H^3}^2
\,dt\leq C_T.
\end{aligned}
\end{align}

To estimate $L_3+L_4$, note that $\partial_{x_k}
\partial_{x_j} (\varrho+\rho_s)$ solves
\begin{align}\label{rho-34}
\begin{aligned}
&\partial_t \partial_{x_k}\partial_{x_j} (\varrho+\rho_s)
+(\partial_{x_k} \partial_{x_j}\mathbf{u}\cdot  \nabla)(\varrho+\rho_s)
+(\partial_{x_j} \mathbf{u}\cdot  \nabla)\partial_{x_k}(\varrho+\rho_s)\\
&+(\partial_{x_k}\mathbf{u}\cdot  \nabla)\partial_{x_j}(\varrho +\rho_s)
+(\mathbf{u}\cdot  \nabla)\partial_{x_k}\partial_{x_j}(\varrho +\rho_s)
 =0,\quad j,k=1,2.
 \end{aligned}
\end{align}
For any $p\geq 2$, multiplying \eqref{rho-34} by $\abs{
\partial_{x_k}\partial_{x_j} (\varrho+\rho_s)}^{p-2}\partial_{x_k}\partial_{x_j} (\varrho+\rho_s)$, integrating the resulting equation
over $\Omega$ and 
using Hölder’s inequality, we obtain
\begin{align*}
\begin{aligned}
\frac{1}{p}
\frac{d}{dt}
\norm{\partial_{x_k}\partial_{x_j} (\varrho+\rho_s)}_{L^p}^p
\leq &C\norm{\nabla (\varrho+\rho_s)}_{L^{\infty}}
\norm{\nabla^2\mathbf{u}}_{L^{p}}
\norm{\nabla^2(\varrho+\rho_s)}_{L^{p}}^{p-1}\\
&+C\norm{\nabla \mathbf{u}}_{L^{\infty}}
\norm{\nabla^2(\varrho+\rho_s)}_{L^{p}}^{p},
 \end{aligned}
\end{align*}
by which we have
\begin{align*}
\begin{aligned}
\frac{1}{p}
\frac{d}{dt}
\norm{\nabla^2 (\varrho+\rho_s)}_{L^p}^p
\leq &C_T\left(
\norm{\nabla^2(\varrho+\rho_s)}_{L^{p}}^{p-1}+
\norm{\nabla^2(\varrho+\rho_s)}_{L^{p}}^{p}\right).
 \end{aligned}
\end{align*}
It follows the preceding inequality that 
\begin{align*}
\begin{aligned}
\frac{d}{dt}
\norm{\nabla^2 (\varrho+\rho_s)}_{L^p}
\leq &C_T\left(
1+\norm{\nabla^2(\varrho+\rho_s)}_{L^{p}}\right).
 \end{aligned}
\end{align*}
Applying Gronwall’s inequality, one has
\begin{align}\label{rho-34-4}
\begin{aligned}
\norm{\nabla^2 (\varrho+\rho_s)}_{L^p}\leq C_T,\quad 2\leq p<\infty
\quad \text{and}
\quad \forall t \in [0, T].
 \end{aligned}
\end{align}

In a similar way, we can also show that
\begin{align*}
\begin{aligned}
\frac{d}{dt}
\norm{\nabla^3 (\varrho+\rho_s)}_{L^p}
\leq &C_T\left(
1+\norm{\nabla^3(\varrho+\rho_s)}_{L^{p}}\right).
 \end{aligned}
\end{align*}
This, together with Gronwall’s inequality,  implies
\begin{align}\label{rho-34-5}
\begin{aligned}
\norm{\nabla^3(\varrho+\rho_s)}_{L^p}\leq C_T,\quad 2\leq p<\infty
\quad \text{and}
\quad \forall t \in [0, T].
 \end{aligned}
\end{align}

Finally, an application of \eqref{rho-34-4} and \eqref{rho-34-5}, we have
\begin{align}\label{LL-34}
\begin{aligned}
L_3+L_4=&
\int_0^T\norm{\nabla^2(\varrho+\rho_s)}_{L^4}^2
\,dt
+\int_0^T\norm{\varrho}_{H^2}^2\,dt
\\&\leq 
\int_0^T\norm{\nabla^2(\varrho+\rho_s)}_{L^4}^2
\,dt
+\int_0^T\norm{\varrho+\rho_s}_{H^2}^2\,dt
\\&\quad+\int_0^T\norm{\rho_s}_{H^2}^2\,dt
\leq C_T,\quad \forall t \in [0, T].
 \end{aligned}
\end{align}
Collecting \eqref
{lemma-u-h-34-proof-1}, \eqref{h-4}, \eqref{LL-1}, \eqref{LL-2}, \eqref{rho-34-5} and \eqref{LL-34},
we get \eqref{estimates-1-h-34}.
\end{proof}

\section{Estimates of Linear Problems on $\Omega=\T\times (0,h)$}

\subsection{Estimates for
$\norm{\sqrt{\varrho+\rho_s}\mathbf{u}}_{L^{\infty}\left((0,\infty);L^2(\Omega)\right)}$ and $\|\nabla \mathbf{u}\|_{
L^{2}\left((0,\infty);L^2(\Omega)\right)}$}

\begin{lemma}\label{lemma-u-l2-linear}
Let $\Omega=\T\times (0,h)$,  $( \mathbf{u},\varrho)$ be the solution of the problem \eqref{main-peturbation-linear}-\eqref{main-perturbation-free} and
with initial data $(\mathbf{u}_0,\varrho_0)\in H^2\left(\Omega\right)\times L^{\infty}\left(\Omega\right) $.
If $\delta, f \in  L^{\infty}\left(\Omega\right)$ 
and  $\delta (\mathbf{x})>\delta_0>0$, we have
\begin{align}\label{estimates-1-linear}
\begin{aligned}
&\sqrt{\rho_s}\mathbf{u}\in L^{\infty}\left((0,\infty);L^2(\Omega)\right),\quad \nabla \mathbf{u} \in L^{2}\left((0,\infty);L^2(\Omega)\right),\quad \frac{\varrho}{\sqrt{\delta}}\in L^{\infty}\left((0,\infty);L^2(\Omega)\right).
\end{aligned}
\end{align}
Furthermore, we have the following identity
\begin{align}\label{estimates-2-linear}
\begin{aligned}
&\norm{\sqrt{\rho_s}\mathbf{u}}_{L^2}^2 
+\norm{\frac{\varrho}{\sqrt{\delta} }}_{L^2}^2 +2\nu \int_0^t \|\nabla \mathbf{u}(\tau)\|_{L^2}^2\,d\tau=\norm{\sqrt{\rho_s}\mathbf{u}_0}_{L^2}^2 +  \norm{\frac{\varrho_0}{\sqrt{\delta} }}_{L^2}^2 .
\end{aligned}
\end{align}
\end{lemma}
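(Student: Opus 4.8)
The plan is to establish the identity \eqref{estimates-2-linear} by differentiating a natural energy functional, and then to read off the membership statements \eqref{estimates-1-linear} from the fact that every term on the left-hand side of \eqref{estimates-2-linear} is nonnegative while the right-hand side is a fixed constant. Motivated by the linear structure of \eqref{main-peturbation-linear}, in which both $\rho_s$ and $\delta$ are time-independent, I would define
\[
E(t) = \frac{1}{2}\int_\Omega \rho_s |\mathbf{u}|^2\, d\mathbf{x} + \frac{1}{2}\int_\Omega \frac{\varrho^2}{\delta}\, d\mathbf{x}
= \frac{1}{2}\norm{\sqrt{\rho_s}\mathbf{u}}_{L^2}^2 + \frac{1}{2}\norm{\frac{\varrho}{\sqrt{\delta}}}_{L^2}^2 .
\]
Since $\rho_s$ and $\delta$ are independent of $t$, differentiating gives $\frac{dE}{dt} = \int_\Omega \rho_s \mathbf{u}_t \cdot \mathbf{u}\, d\mathbf{x} + \int_\Omega \frac{\varrho \varrho_t}{\delta}\, d\mathbf{x}$, into which I would substitute the two evolution equations in \eqref{main-peturbation-linear} directly.

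For the kinetic part I replace $\rho_s \mathbf{u}_t$ by $\nu \Delta \mathbf{u} - \nabla P - \varrho\nabla f$. The pressure contribution $-\int_\Omega \nabla P \cdot \mathbf{u}\, d\mathbf{x}$ vanishes after integrating by parts, using $\nabla\cdot\mathbf{u}=0$ together with $u_2|_{x_2=0,h}=0$ (so that $\mathbf{u}\cdot\mathbf{n}=0$ on $\partial\Omega$, while the periodic $x_1$-direction contributes no boundary). For the potential part I use $\varrho_t = \delta\,\mathbf{u}\cdot\nabla f$, which gives $\int_\Omega \frac{\varrho\varrho_t}{\delta}\,d\mathbf{x} = \int_\Omega \varrho\,\mathbf{u}\cdot\nabla f\,d\mathbf{x}$; this exactly cancels the buoyancy term $-\int_\Omega \varrho\nabla f\cdot\mathbf{u}\,d\mathbf{x}$ coming from the momentum equation. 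This cancellation is the structural heart of the argument and is precisely what singles out the weight $1/\delta$ as the correct one for the density.

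The one point requiring care is the viscous term. Writing $\int_\Omega \Delta\mathbf{u}\cdot\mathbf{u}\,d\mathbf{x} = -\int_\Omega |\nabla\mathbf{u}|^2\,d\mathbf{x} + \int_{\partial\Omega}(\partial_{\mathbf{n}}\mathbf{u})\cdot\mathbf{u}\,dS$, I must show the boundary integral vanishes. On $\mathbb{T}\times(0,h)$ the only boundary is at $x_2=0,h$ with outward normal $\pm(0,1)$, so $\partial_{\mathbf{n}}=\pm\partial_{x_2}$ and the integrand reduces to $\pm\bigl((\partial_{x_2}u_1)u_1 + (\partial_{x_2}u_2)u_2\bigr)$. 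The first summand vanishes because $\partial_{x_2}u_1=0$ on $x_2=0,h$, and the second vanishes because $u_2=0$ there; both facts are exactly the free boundary condition \eqref{main-perturbation-free}. Hence $\frac{dE}{dt} = -\nu\norm{\nabla\mathbf{u}}_{L^2}^2$, and integrating from $0$ to $t$ and multiplying by $2$ yields \eqref{estimates-2-linear}.

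Finally, since the right-hand side of \eqref{estimates-2-linear} equals the finite constant $\norm{\sqrt{\rho_s}\mathbf{u}_0}_{L^2}^2 + \norm{\varrho_0/\sqrt{\delta}}_{L^2}^2$ and each of the three terms on the left is nonnegative, I obtain at once $\sup_{t>0}\norm{\sqrt{\rho_s}\mathbf{u}(t)}_{L^2}^2 < \infty$, $\sup_{t>0}\norm{\varrho(t)/\sqrt{\delta}}_{L^2}^2 < \infty$, and $\int_0^\infty \norm{\nabla\mathbf{u}(\tau)}_{L^2}^2\,d\tau < \infty$, which are precisely \eqref{estimates-1-linear} (here $\delta\ge\delta_0>0$ guarantees that control of $\varrho/\sqrt{\delta}$ also controls $\varrho$). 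The main obstacle is confined to the boundary-term computation for the viscous part; the pressure and buoyancy terms are handled by a direct substitution and the exact cancellation described above.
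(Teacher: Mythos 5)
Your proposal is correct and follows essentially the same route as the paper: the same weighted energy $E_l=\frac{1}{2}\norm{\sqrt{\rho_s}\mathbf{u}}_{L^2}^2+\frac{1}{2}\norm{\varrho/\sqrt{\delta}}_{L^2}^2$, the same cancellation of the buoyancy term $-\int_\Omega \varrho\nabla f\cdot\mathbf{u}\,d\mathbf{x}$ against $\int_\Omega \varrho\varrho_t/\delta\,d\mathbf{x}$ via $\varrho_t=\delta\,\mathbf{u}\cdot\nabla f$, followed by time integration. In fact your write-up is more complete than the paper's, which states the identity $\frac{dE_l}{dt}=-\nu\norm{\nabla\mathbf{u}}_{L^2}^2$ without verifying that the pressure and viscous boundary integrals at $x_2=0,h$ vanish under the free boundary condition, a point you check explicitly and correctly.
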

\begin{proof}
Let us define a general energy $E_l$ for the linear system
\eqref{main-peturbation-linear} as follows
\begin{align}
E_l=
\frac{1}{2}\left(
\norm{\sqrt{\rho_s}\mathbf{u}}_{L^2}^2 +
\norm{\frac{\varrho}{\sqrt{\delta} }}_{L^2}^2\right)
\end{align}

Differentiating $E_l(t)$ with respect to time, 
using \eqref{main-peturbation-linear},
we have
\[
\begin{aligned}
&\frac{dE_l}{dt}=
\int_{\Omega}\rho_s\mathbf{u}\cdot 
\mathbf{u}_t\,d\mathbf{x}+
\int_{\Omega}
\frac{\varrho\varrho_t}{\delta }
\mathbf{u}_t\,d\mathbf{x}=-
\nu \int_{\Omega}
\abs{\nabla \mathbf{u}}^2\,d\mathbf{x}
\end{aligned}
\]
Integrating this with respective time yields 
\eqref{estimates-1-linear}-\eqref{estimates-2-linear}.

\end{proof}
\subsection{Estimates for $\|\nabla \mathbf{u}\|_{
L^{\infty}\left((0,\infty);L^2(\Omega)\right)}$ and
$\norm{\sqrt{\varrho+\rho_s}\mathbf{u}_t}_{L^{2}\left((0,\infty);L^2(\Omega)\right)}$ }

\begin{lemma}\label{lemma-nablau-l2}
Let $\Omega=\T\times (0,h)$,  $( \mathbf{u},\varrho)$ be the solution of the problem \eqref{main-peturbation-linear}-\eqref{main-perturbation-free} and
with initial data $(\mathbf{u}_0,\varrho_0)\in H^2\left(\Omega\right)\times L^{\infty}\left(\Omega\right) $.
If $(\delta,f) \in  L^{\infty}\left(\Omega\right)\times W^{1,\infty}\left(\Omega\right)$ 
and  $\delta (\mathbf{x})>\delta_0>0$, we have
\begin{align}\label{estimates-1-nablau-linear}
\begin{aligned}
&\sqrt{\rho_s}\mathbf{u}_t\in L^{2}\left((0,\infty);L^2(\Omega)\right),\\
&\nabla \mathbf{u} \in L^{\infty}\left((0,\infty);L^2(\Omega)\right),\\
&\varrho \mathbf{u}\cdot \nabla f\in L^{\infty}\left((0,\infty);L^1(\Omega)\right).
\end{aligned}
\end{align}
Furthermore, we have
\begin{align}\label{estimates-2-nablau-linear}
\begin{aligned}
&\int_\Omega |\nabla \mathbf{u}|^2 dx + 
\frac{2}{\nu} 
\int_0^t \|\sqrt{\varrho+\rho_s}\mathbf{u}_{\tau}\|_{L^2}^2\,d\tau  \le C_0.
\end{aligned}
\end{align}
\end{lemma}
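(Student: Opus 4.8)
The plan is to reproduce the energy method used for the general-domain estimate, but to exploit the fact that the linearized system carries no convective term, so that the logarithmic interpolation and nonlinear Gronwall machinery are no longer needed. First I would test the momentum equation $\eqref{main-peturbation-linear}_1$ with $\mathbf{u}_t$ and integrate over $\Omega$. The free boundary condition \eqref{main-perturbation-free} is exactly what kills the boundary contributions: in the viscous term, integration by parts produces a surface integral whose integrand is $\pm(\partial_{x_2}u_1\,\partial_t u_1+\partial_{x_2}u_2\,\partial_t u_2)$, and this vanishes because $\partial_{x_2}u_1=0$ and (from $u_2|_{x_2=0,h}=0$) $\partial_t u_2=0$ on the boundary; in the pressure term, $\mathbf{u}_t\cdot\mathbf{n}=\pm\partial_t u_2=0$ there while $\nabla\cdot\mathbf{u}_t=0$ removes the interior part. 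This yields
\[
\|\sqrt{\rho_s}\,\mathbf{u}_t\|_{L^2}^2+\frac{\nu}{2}\frac{d}{dt}\|\nabla\mathbf{u}\|_{L^2}^2=-\int_\Omega\varrho\,\nabla f\cdot\mathbf{u}_t\,d\mathbf{x}.
\]

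Next I would rewrite the forcing term using the linear continuity equation $\varrho_t=\delta(\mathbf{x})\,\mathbf{u}\cdot\nabla f$. In contrast to the general-domain case, where this step required an integration by parts against $\mathbf{u}\cdot\nabla(\varrho+\rho_s)$, here it collapses to a single sign-definite term:
\[
-\int_\Omega\varrho\,\nabla f\cdot\mathbf{u}_t\,d\mathbf{x}=-\frac{d}{dt}\int_\Omega\varrho\,\nabla f\cdot\mathbf{u}\,d\mathbf{x}+\int_\Omega\delta\,|\mathbf{u}\cdot\nabla f|^2\,d\mathbf{x}.
\]
Substituting and integrating over $[0,t]$ gives
\[
\frac{\nu}{2}\|\nabla\mathbf{u}(t)\|_{L^2}^2+\int_0^t\|\sqrt{\rho_s}\,\mathbf{u}_\tau\|_{L^2}^2\,d\tau=\frac{\nu}{2}\|\nabla\mathbf{u}_0\|_{L^2}^2+\int_\Omega\varrho_0\nabla f\cdot\mathbf{u}_0\,d\mathbf{x}-\int_\Omega\varrho\,\nabla f\cdot\mathbf{u}\,d\mathbf{x}+\int_0^t\!\!\int_\Omega\delta\,|\mathbf{u}\cdot\nabla f|^2\,d\mathbf{x}\,d\tau.
\]
The time-boundary term is controlled uniformly by $\|\varrho\|_{L^2}\|\nabla f\|_{L^\infty}\|\mathbf{u}\|_{L^2}\le C_0$, using the uniform bounds on $\|\sqrt{\rho_s}\,\mathbf{u}\|_{L^2}$ and $\|\varrho/\sqrt{\delta}\|_{L^2}$ from Lemma \ref{lemma-u-l2-linear} together with $\delta,\rho_s$ bounded above and below; this same inequality simultaneously establishes the claim $\varrho\,\mathbf{u}\cdot\nabla f\in L^\infty((0,\infty);L^1(\Omega))$ in \eqref{estimates-1-nablau-linear}.

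The crux is to bound the last term uniformly in $t$, and this is where I expect the main difficulty. Since $\delta\in L^\infty$ and $\nabla f\in W^{1,\infty}\subset L^\infty$, we have $\int_\Omega\delta\,|\mathbf{u}\cdot\nabla f|^2\,d\mathbf{x}\le C\|\mathbf{u}\|_{L^2}^2$, so it suffices to show $\int_0^\infty\|\mathbf{u}\|_{L^2}^2\,d\tau<\infty$. Unlike the general-domain estimate, where the analogous term carried an extra factor $\|\nabla\mathbf{u}\|_{L^2}^2$ and could be absorbed into the nonlinear Gronwall device, here the bound is merely quadratic in $\mathbf{u}$ with no gradient gain, so integrability must come from a Poincar\'e inequality $\|\mathbf{u}\|_{L^2}\le C\|\nabla\mathbf{u}\|_{L^2}$ on $\Omega=\mathbb{T}\times(0,h)$.

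To secure that inequality I would split $\mathbf{u}\cdot\nabla f=u_1\partial_{x_1}f+u_2\partial_{x_2}f$. The component $u_2$ satisfies Poincar\'e because $u_2|_{x_2=0,h}=0$, and in the physically relevant stratified setting $f=f(x_2)$ (in particular the constant-gravity case $\nabla f=(0,g)$ used later in \autoref{theorem-4}) the horizontal derivative $\partial_{x_1}f$ vanishes, so that $\mathbf{u}\cdot\nabla f=u_2\,\partial_{x_2}f$ depends on $u_2$ alone; then $\int_\Omega\delta\,|\mathbf{u}\cdot\nabla f|^2\,d\mathbf{x}\le C\|u_2\|_{L^2}^2\le C\|\partial_{x_2}u_2\|_{L^2}^2\le C\|\nabla\mathbf{u}\|_{L^2}^2$, whence $\int_0^t\!\int_\Omega\delta\,|\mathbf{u}\cdot\nabla f|^2\le C\int_0^\infty\|\nabla\mathbf{u}\|_{L^2}^2\,d\tau<\infty$ by the energy identity \eqref{estimates-2-linear}. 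For a general $f$ with nonzero $\partial_{x_1}f$ the only genuine obstruction is the spatial mean $\langle u_1\rangle=\tfrac{1}{|\Omega|}\int_\Omega u_1\,d\mathbf{x}$, since the $x_1$-oscillatory part of $u_1$ is already controlled by $\|\partial_{x_1}u_1\|_{L^2}\le\|\nabla\mathbf{u}\|_{L^2}$ through incompressibility and periodicity; this mean couples only to the horizontal force $\varrho\,\partial_{x_1}f$ via the $x_1$-averaged first momentum equation and must be treated separately. Granting the Poincar\'e bound, the displayed energy equality delivers at once the uniform bound $\|\nabla\mathbf{u}\|_{L^\infty((0,\infty);L^2)}\le C_0$ and $\sqrt{\rho_s}\,\mathbf{u}_t\in L^2((0,\infty);L^2)$, that is, \eqref{estimates-1-nablau-linear} and \eqref{estimates-2-nablau-linear}.
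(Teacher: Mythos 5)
Your argument is, step for step, the paper's own proof: test $\eqref{main-peturbation-linear}_1$ with $\mathbf{u}_t$, rewrite the forcing term via the continuity equation $\varrho_t=\delta(\mathbf{x})\,\mathbf{u}\cdot\nabla f$ as $-\frac{d}{dt}\int_\Omega\varrho\,\mathbf{u}\cdot\nabla f\,d\mathbf{x}+\int_\Omega\delta\,\abs{\mathbf{u}\cdot\nabla f}^2\,d\mathbf{x}$ (this is exactly \eqref{nablau-l2-linear}), integrate in time as in \eqref{nablau-l2-linear-2}, and control the endpoint term $\int_\Omega\varrho\,\mathbf{u}\cdot\nabla f\,d\mathbf{x}$ by the conserved energy of Lemma \ref{lemma-u-l2-linear}. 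Your explicit check that the viscous and pressure boundary integrals vanish under \eqref{main-perturbation-free} is correct and is left tacit in the paper.

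The one point where you part company with the paper is the production term $\int_0^t\int_\Omega\delta\,\abs{\mathbf{u}\cdot\nabla f}^2\,d\mathbf{x}\,d\tau$, and there your diagnosis is sharper than the paper's proof, but your own argument stops short. The paper's display \eqref{nablau-l2-linear-2} bounds this term by $C_0$ with no justification; as you observe, it carries no gradient gain, so its finiteness must come from $\mathbf{u}\cdot\nabla f\in L^2\left((0,\infty);L^2(\Omega)\right)$, and with only $\int_0^\infty\norm{\nabla\mathbf{u}}_{L^2}^2\,d\tau<\infty$ available from \eqref{estimates-2-linear} this reduces to a Poincar\'e inequality that genuinely fails for the horizontal mean of $u_1$: the constant field $(c,0)$ is divergence-free and satisfies \eqref{main-perturbation-free}, and averaging the first component of the momentum equation (periodicity in $x_1$ plus $\partial_{x_2}u_1|_{x_2=0,h}=0$ annihilate the viscous and pressure contributions) gives $\frac{d}{dt}\int_\Omega\rho_s u_1\,d\mathbf{x}=-\int_\Omega\varrho\,\partial_{x_1}f\,d\mathbf{x}$, so this mode is undamped. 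Consequently your proof is complete only when $\partial_{x_1}f\equiv 0$, so that $\mathbf{u}\cdot\nabla f=u_2\,\partial_{x_2}f$ and Poincar\'e applies to $u_2$ (which vanishes at $x_2=0,h$); this does cover the constant-gravity case $\nabla f=(0,g)$ used in the second half of \autoref{theorem-4}, but for general $f\in W^{1,\infty}$ you merely isolate $\langle u_1\rangle$ as the obstruction and then ``grant'' the needed bound, which is a genuine gap relative to the lemma as stated. You should be aware, however, that this is precisely the step the paper's own proof asserts without argument: closing it in full generality requires either restricting to stratified potentials $f=f(x_2)$ or a separate treatment of the mean horizontal momentum, and your write-up has the merit of making that visible.
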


\begin{proof}
Multiplying  the momentum equations $\eqref{main-peturbation-linear}_1$ by $\mathbf{u}_t$, integrating over $\Omega$, we have
\begin{align}\label{nablau-l2-linear}
\begin{aligned}
&\nu\frac{1}{2} \frac{d}{dt} \int_\Omega |\nabla \mathbf{u}|^2 d\mathbf{x}+  \|\sqrt{\rho_s}\mathbf{u}_t\|_{L^2}^2 =-\int_{\Omega}\varrho \mathbf{u}_t\cdot \nabla f\,d\mathbf{x}\\
&=-\frac{d}{dt} \int_{\Omega}\varrho \mathbf{u}\cdot \nabla f\,d\mathbf{x}
+\int_{\Omega}\varrho_t \mathbf{u}\cdot \nabla f\,d\mathbf{x}\\
&=-\frac{d}{dt} \int_{\Omega}\varrho \mathbf{u}\cdot \nabla f\,d\mathbf{x}
+\int_{\Omega}\delta (\mathbf{x})\left(\mathbf{u}\cdot \nabla f\right)^2\,d\mathbf{x}
\end{aligned}
\end{align}
where we used $\eqref{main-peturbation-linear}_2$ in the last equality.
Time integration of \eqref{nablau-l2-linear} gives
\begin{align}\label{nablau-l2-linear-2}
\begin{aligned}
&\nu \int_\Omega |\nabla \mathbf{u}|^2 d\mathbf{x} 
+2\int_0^t
 \|\sqrt{\rho_s}\mathbf{u}_{\tau}\|_{L^2}^2\,d\tau\\& =
 -2 \int_{\Omega}\varrho \mathbf{u}\cdot \nabla f\,d\mathbf{x}
 +
 2
 \int_0^t\norm{\delta (\mathbf{x})\left(\mathbf{u}(\tau)\cdot \nabla f\right)^2}_{L^1}
 \,d\tau
 \\&\quad+\nu\norm{\nabla \mathbf{u}_0}_{L^2}^2 
+ 2\norm{\varrho_0 \mathbf{u}_0\cdot \nabla f}_{L^1}
\leq C_0.
\end{aligned}
\end{align}
This infers the conclusions 
\eqref{estimates-1-nablau-linear} and \eqref{estimates-2-nablau-linear}.
\end{proof}

\subsection{Estimates for $\|\nabla \mathbf{u}_t\|_{
L^{2}\left((0,\infty);L^2(\Omega)\right)}$ and
$\norm{\sqrt{\varrho+\rho_s}\mathbf{u}_t}_{L^{\infty}\left((0,\infty);L^2(\Omega)\right)}$ }

\begin{lemma}\label{lemma-nablau_t-l3-linear}
Let $\Omega=\T\times (0,h)$,  $( \mathbf{u},\varrho)$ be the solution of the problem \eqref{main-peturbation-linear}-\eqref{main-perturbation-free} and
with initial data $(\mathbf{u}_0,\varrho_0)\in H^2\left(\Omega\right)\times L^{\infty}\left(\Omega\right) $.
If $(\delta,f) \in  L^{\infty}\left(\Omega\right)\times W^{1,\infty}\left(\Omega\right)$ 
and  $\delta (\mathbf{x})>\delta_0>0$, we have
\begin{align}\label{estimates-1-nablau-linear-t}
\begin{aligned}
&\sqrt{\varrho+\rho_s}\mathbf{u}_t\in L^{\infty}\left((0,\infty);L^2(\Omega)\right),\quad\nabla \mathbf{u}_t \in L^{2}\left((0,\infty);L^2(\Omega)\right).
\end{aligned}
\end{align}
Furthermore, we have
\begin{align}\label{estimates-2-nablau-linear-t}
\begin{aligned}
\int_{\Omega}
\left(\varrho+\rho_s\right)\abs{\mathbf{u}_{t}}^2\,d
\mathbf{x}+\nu\int_0^t\norm{\nabla \mathbf{u}_{\tau}}_{L^2}^2\,d\mathbf{x}\,d\tau\leq C_0.
\end{aligned}
\end{align}
\end{lemma}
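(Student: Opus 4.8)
The plan is to mirror the strategy of Lemma~\ref{lemma-nablau_t-l3}, but to exploit the linear structure of \eqref{main-peturbation-linear}, which will trivialize the estimate. First I would differentiate the momentum equation $\eqref{main-peturbation-linear}_1$ in time. Since $\rho_s$, $\delta$ and $f$ are all independent of $t$, and using the linear density law $\varrho_t=\delta(\mathbf{x})\,\mathbf{u}\cdot\nabla f$, this yields
\[
\rho_s\mathbf{u}_{tt}=\nu\Delta\mathbf{u}_t-\nabla P_t-\delta(\mathbf{x})(\mathbf{u}\cdot\nabla f)\,\nabla f .
\]
Testing this against $\mathbf{u}_t$ and integrating over $\Omega$ produces, after the two routine integrations by parts, the identity
\[
\tfrac12\tfrac{d}{dt}\|\sqrt{\rho_s}\,\mathbf{u}_t\|_{L^2}^2+\nu\|\nabla\mathbf{u}_t\|_{L^2}^2
=-\int_\Omega \delta(\mathbf{x})(\mathbf{u}\cdot\nabla f)(\nabla f\cdot\mathbf{u}_t)\,d\mathbf{x}.
\]
Here the pressure term drops because $\nabla\cdot\mathbf{u}_t=0$ together with $\mathbf{u}_t\cdot\mathbf{n}=0$ on $\partial\Omega$, and the viscous boundary term drops because the free conditions \eqref{main-perturbation-free}, differentiated in $t$, give $(\mathbf{u}_t)_2=\partial_{x_2}(\mathbf{u}_t)_1=0$ on $x_2=0,h$ while periodicity handles the $\mathbb{T}$-direction.

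The key observation --- and the reason the linear case avoids the logarithmic interpolation machinery of Lemma~\ref{lemma-nablau-l2} and Lemma~\ref{lemma-nablau_t-l3} --- is that the right-hand side is a \emph{perfect time derivative}. Indeed, since $\nabla f$ is stationary, $\nabla f\cdot\mathbf{u}_t=\partial_t(\nabla f\cdot\mathbf{u})$, and since $\delta$ is stationary,
\[
-\int_\Omega \delta(\mathbf{u}\cdot\nabla f)(\nabla f\cdot\mathbf{u}_t)\,d\mathbf{x}
=-\tfrac12\tfrac{d}{dt}\int_\Omega \delta(\mathbf{x})(\mathbf{u}\cdot\nabla f)^2\,d\mathbf{x}.
\]
Substituting and integrating in time over $[0,t]$ then gives the exact energy identity
\[
\|\sqrt{\rho_s}\,\mathbf{u}_t(t)\|_{L^2}^2+\big\|\sqrt{\delta}\,(\mathbf{u}(t)\cdot\nabla f)\big\|_{L^2}^2
+2\nu\int_0^t\|\nabla\mathbf{u}_\tau\|_{L^2}^2\,d\tau
=\|\sqrt{\rho_s}\,\mathbf{u}_t(0)\|_{L^2}^2+\big\|\sqrt{\delta}\,(\mathbf{u}_0\cdot\nabla f)\big\|_{L^2}^2,
\]
from which both conclusions in \eqref{estimates-1-nablau-linear-t} and the bound \eqref{estimates-2-nablau-linear-t} follow at once (note that since $\rho_s\ge\alpha_1>0$ the weights $\rho_s$ and $\varrho+\rho_s$ are interchangeable up to constants in these norms, so the stated $\varrho+\rho_s$ is harmless).

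It remains only to control the two initial quantities on the right. The second is immediate: $\|\sqrt{\delta}\,(\mathbf{u}_0\cdot\nabla f)\|_{L^2}^2\le\|\delta\|_{L^\infty}\|\nabla f\|_{L^\infty}^2\|\mathbf{u}_0\|_{L^2}^2\le C_0$. For the first I would, exactly as at the end of Lemma~\ref{lemma-nablau_t-l3}, evaluate $\eqref{main-peturbation-linear}_1$ at $t=0$, pair it with $\mathbf{u}_t(0)$, discard the pressure term by incompressibility, and use Cauchy--Schwarz together with $\rho_s\ge\alpha_1$ to obtain
\[
\|\sqrt{\rho_s}\,\mathbf{u}_t(0)\|_{L^2}
\le C\big(\nu\|\Delta\mathbf{u}_0\|_{L^2}+\|\nabla f\|_{L^\infty}\|\varrho_0\|_{L^2}\big)\le C_0,
\]
which is finite since $\mathbf{u}_0\in H^2(\Omega)$ and $\varrho_0\in L^\infty(\Omega)\subset L^2(\Omega)$. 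The only point requiring genuine care is the vanishing of the boundary contributions in the two integrations by parts; once the differentiated free conditions are checked as above, the remainder is the elementary energy computation just described, so I expect no real obstacle beyond verifying those boundary terms and the $t=0$ bound.
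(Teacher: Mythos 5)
Your proof is correct, but it takes a genuinely different route from the paper's at the decisive step. Both arguments begin identically: differentiate $\eqref{main-peturbation-linear}_1$ in time, test with $\mathbf{u}_t$, and verify that the pressure and viscous boundary contributions vanish under the time-differentiated free conditions \eqref{main-perturbation-free}. The paper then estimates the forcing term $-\int_\Omega\delta(\mathbf{u}\cdot\nabla f)(\mathbf{u}_t\cdot\nabla f)\,d\mathbf{x}$ by Young's inequality, obtaining the differential inequality $\frac{d}{dt}\norm{\sqrt{\rho_s}\mathbf{u}_t}_{L^2}^2+\nu\norm{\nabla\mathbf{u}_t}_{L^2}^2\le C_0\norm{\nabla\mathbf{u}}_{L^2}^2$, and closes by integrating in time and invoking $\nabla\mathbf{u}\in L^2((0,\infty);L^2(\Omega))$ from Lemma \ref{lemma-u-l2-linear}. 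You instead notice that, since $\delta$ and $\nabla f$ are stationary, the forcing term is the exact derivative $-\frac{1}{2}\frac{d}{dt}\int_\Omega\delta(\mathbf{u}\cdot\nabla f)^2\,d\mathbf{x}$, so the estimate upgrades to an exact conservation law for the differentiated energy $\frac{1}{2}\bigl(\norm{\sqrt{\rho_s}\mathbf{u}_t}_{L^2}^2+\norm{\varrho_t/\sqrt{\delta}}_{L^2}^2\bigr)$ --- precisely the time-differentiated analogue of $E_l$ in Lemma \ref{lemma-u-l2-linear}, since $\varrho_t/\sqrt{\delta}=\sqrt{\delta}\,\mathbf{u}\cdot\nabla f$. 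This buys three things: your argument is self-contained (no appeal to the dissipation integral of the previous lemma, which you need only for nothing beyond the initial data); you obtain the extra uniform bound on $\sqrt{\delta}\,\mathbf{u}\cdot\nabla f$ for free; and you bypass the Poincar\'e-type inequalities $\norm{\mathbf{u}_t}_{L^2}\lesssim\norm{\nabla\mathbf{u}_t}_{L^2}$ and $\norm{\mathbf{u}}_{L^2}\lesssim\norm{\nabla\mathbf{u}}_{L^2}$ that are implicit in the paper's Young step and are delicate on $\T\times(0,h)$ with free boundary conditions, where nontrivial constant horizontal flows are admissible. Your $t=0$ bound mirrors the paper's treatment at the end of Lemma \ref{lemma-nablau_t-l3} and is fine. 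One caveat, which you share with (and inherit from) the paper itself: passing from the weight $\rho_s$ appearing in the energy identity to the weight $\varrho+\rho_s$ in the stated conclusion \eqref{estimates-2-nablau-linear-t} requires a pointwise upper bound on $\varrho$, which is not automatic for the linear system ($\varrho$ is sourced by $\delta\,\mathbf{u}\cdot\nabla f$, not transported, so \eqref{density-bounds} does not apply); your parenthetical claim that the two weights are interchangeable is therefore only as justified as the paper's own formulation of the lemma.
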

\begin{proof}

Differentiating $\eqref{main-peturbation-linear}_1$ with respect to time yields the following evolution equation:
\begin{align}\label{utttt-linear}
\begin{aligned}
\rho_s\mathbf{u}_{tt}= \nu \Delta \mathbf{u}_t- \nabla P_t-\varrho_t\nabla f.
\end{aligned}
\end{align}
Multiplying the resulting equation by $\mathbf{u}_t$, testing against the same function, and integrating over the domain $\Omega$, we obtain
\begin{align*}
\begin{aligned}
\frac{1}{2}\frac{d}{dt}
\norm{\sqrt{\rho_s}
\mathbf{u}_t}_{L^2}^2
+\nu \norm{\nabla
\mathbf{u}_t}_{L^2}^2
=\norm{-\delta 
(\mathbf{u}\cdot  \nabla f )(\mathbf{u}_t\cdot \nabla f)}_{L^1}\leq \frac{\nu}{2}  \norm{\nabla \mathbf{u}_t}_{L^2}^2
+C_0
\norm{\nabla\mathbf{u}}_{L^2}^2.
\end{aligned}
\end{align*}
This implies that
\begin{align}\label{nablau-l2-linear-t}
\begin{aligned}
\frac{d}{dt}
\norm{\sqrt{\rho_s}
\mathbf{u}_t}_{L^2}^2
+\nu \norm{\nabla
\mathbf{u}_t}_{L^2}^2\leq C_0
\norm{\nabla\mathbf{u}}_{L^2}^2.
\end{aligned}
\end{align}
Integrating \eqref{nablau-l2-linear-t} with respective time yields 
\begin{align*}
\begin{aligned}
\norm{\sqrt{\rho_s}
\mathbf{u}_t}_{L^2}^2
+\nu\int_0^t \norm{\nabla
\mathbf{u}_{\tau}}_{L^2}^2\,d\tau\leq 
C_0+
C_0\int_0^t
\norm{\nabla\mathbf{u}(\tau)}_{L^2}^2\,d\tau.
\end{aligned}
\end{align*}
This infers the conclusions 
\eqref{estimates-1-nablau-linear-t} and \eqref{estimates-2-nablau-linear-t}.
\end{proof}

\subsection{Estimates for $\|\nabla \mathbf{u}_t\|_{
L^{\infty}\left((0,\infty);L^2(\Omega)\right)}$ and
$\norm{\sqrt{\varrho+\rho_s}\mathbf{u}_{tt}}_{L^{2}\left((0,\infty);L^2(\Omega)\right)}$ }

\begin{lemma}\label{lemma-nablau_t-l3-linear-t}
Let $\Omega=\T\times (0,h)$,  $( \mathbf{u},\varrho)$ be the solution of the problem \eqref{main-peturbation-linear}-\eqref{main-perturbation-free} and
with initial data $(\mathbf{u}_0,\varrho_0)\in H^2\left(\Omega\right)\times L^{\infty}\left(\Omega\right) $.
If $(\delta,f) \in  L^{\infty}\left(\Omega\right)\times W^{1,\infty}\left(\Omega\right)$ 
and  $\delta (\mathbf{x})>\delta_0>0$, we have
\begin{align}\label{estimates-1-nablau-linear-tt}
\begin{aligned}
&\sqrt{\varrho+\rho_s}\mathbf{u}_{tt}\in L^{2}\left((0,\infty);L^2(\Omega)\right),\quad\nabla \mathbf{u}_t \in L^{\infty}\left((0,\infty);L^2(\Omega)\right).
\end{aligned}
\end{align}
Furthermore, we have
\begin{align}\label{estimates-2-nablau-linear-tt}
\begin{aligned}
\int_{\Omega}
\left(\varrho+\rho_s\right)\abs{\mathbf{u}_{t}}^2\,d
\mathbf{x}+\nu\int_0^t\norm{\nabla \mathbf{u}_{\tau}}_{L^2}^2\,d\mathbf{x}\,d\tau\leq C_0.
\end{aligned}
\end{align}
\end{lemma}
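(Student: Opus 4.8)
The plan is to differentiate the momentum equation once more in time and test the resulting equation against $\mathbf{u}_{tt}$, which is the pairing that naturally produces $\tfrac{d}{dt}\|\nabla\mathbf{u}_t\|_{L^2}^2$ together with a full dissipation term $\|\sqrt{\rho_s}\mathbf{u}_{tt}\|_{L^2}^2$. Starting from the already-derived equation \eqref{utttt-linear} for $\mathbf{u}_t$, namely $\rho_s\mathbf{u}_{tt}=\nu\Delta\mathbf{u}_t-\nabla P_t-\varrho_t\nabla f$, I take the $L^2$ inner product with $\mathbf{u}_{tt}$ and integrate over $\Omega=\mathbb{T}\times(0,h)$.

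First I dispose of the pressure and viscous terms. The pressure term vanishes: integrating by parts, $-\int_\Omega\nabla P_t\cdot\mathbf{u}_{tt}=\int_\Omega P_t\,\nabla\cdot\mathbf{u}_{tt}-\int_{\partial\Omega}P_t\,\mathbf{u}_{tt}\cdot\mathbf{n}$, and both pieces are zero since $\nabla\cdot\mathbf{u}_{tt}=0$ and $u_{2,tt}=0$ on $x_2=0,h$ by \eqref{main-perturbation-free}. For the viscous term I integrate by parts to get $\nu\int_\Omega\Delta\mathbf{u}_t\cdot\mathbf{u}_{tt}=-\tfrac{\nu}{2}\tfrac{d}{dt}\|\nabla\mathbf{u}_t\|_{L^2}^2$, the boundary contributions dropping because differentiating \eqref{main-perturbation-free} in time yields $\partial_{x_2}u_{1,t}=0$ and $u_{2,tt}=0$ on $x_2=0,h$, which annihilates $(\partial_{\mathbf{n}}\mathbf{u}_t)\cdot\mathbf{u}_{tt}$ on both plates. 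This leaves the identity $\|\sqrt{\rho_s}\mathbf{u}_{tt}\|_{L^2}^2+\tfrac{\nu}{2}\tfrac{d}{dt}\|\nabla\mathbf{u}_t\|_{L^2}^2=-\int_\Omega\varrho_t\,\nabla f\cdot\mathbf{u}_{tt}\,d\mathbf{x}$.

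The crux is the forcing term. Here I use the linear density law $\eqref{main-peturbation-linear}_2$, $\varrho_t=\delta(\mathbf{x})\,\mathbf{u}\cdot\nabla f$, so the right side equals $-\int_\Omega\delta(\mathbf{x})(\mathbf{u}\cdot\nabla f)(\nabla f\cdot\mathbf{u}_{tt})\,d\mathbf{x}$. Since $\delta,f\in L^\infty$ and $\rho_s\ge\alpha_1>0$, Hölder and Young give $\le\tfrac12\|\sqrt{\rho_s}\mathbf{u}_{tt}\|_{L^2}^2+C\|\mathbf{u}\|_{L^2}^2$, after which the Poincaré inequality already invoked in Lemma \ref{lemma-nablau_t-l3-linear} converts $\|\mathbf{u}\|_{L^2}^2$ into $C_0\|\nabla\mathbf{u}\|_{L^2}^2$. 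Absorbing the $\mathbf{u}_{tt}$ term on the left produces the differential inequality $\tfrac12\|\sqrt{\rho_s}\mathbf{u}_{tt}\|_{L^2}^2+\tfrac{\nu}{2}\tfrac{d}{dt}\|\nabla\mathbf{u}_t\|_{L^2}^2\le C_0\|\nabla\mathbf{u}\|_{L^2}^2$.

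Finally I integrate in time over $[0,t]$. The decisive point, and what upgrades this to a uniform (not merely local) bound, is that the right side has a finite $t$-independent time integral: by the basic energy identity \eqref{estimates-2-linear} of Lemma \ref{lemma-u-l2-linear} one has $\int_0^\infty\|\nabla\mathbf{u}(\tau)\|_{L^2}^2\,d\tau\le C_0$. Combined with finiteness of $\|\nabla\mathbf{u}_t(0)\|_{L^2}$ from the regularity of the initial data (as in Lemma \ref{lemma-u-tt-34-1}), this yields $\tfrac{\nu}{2}\|\nabla\mathbf{u}_t(t)\|_{L^2}^2+\tfrac12\int_0^t\|\sqrt{\rho_s}\mathbf{u}_{\tau\tau}\|_{L^2}^2\,d\tau\le C_0$ for all $t>0$, which is precisely \eqref{estimates-2-nablau-linear-tt} and hence \eqref{estimates-1-nablau-linear-tt}. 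I expect the main difficulty to be technical rather than structural: verifying that every boundary term genuinely vanishes under \eqref{main-perturbation-free} after the additional time differentiation, and ensuring the Poincaré inequality is applied on the correct subspace (excluding constant horizontal flows, i.e.\ on the zero-horizontal-mean class) so that $\|\mathbf{u}\|_{L^2}\lesssim\|\nabla\mathbf{u}\|_{L^2}$ truly holds on $\Omega=\mathbb{T}\times(0,h)$.
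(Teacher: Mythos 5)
Your proposal follows the paper's proof essentially verbatim: test \eqref{utttt-linear} with $\mathbf{u}_{tt}$, substitute $\varrho_t=\delta(\mathbf{x})\,\mathbf{u}\cdot\nabla f$ from $\eqref{main-peturbation-linear}_2$, absorb the forcing via Young's inequality into $\tfrac12\norm{\sqrt{\rho_s}\mathbf{u}_{tt}}_{L^2}^2+C_0\norm{\nabla\mathbf{u}}_{L^2}^2$, and integrate in time using $\int_0^\infty\norm{\nabla\mathbf{u}(\tau)}_{L^2}^2\,d\tau\le C_0$ from Lemma \ref{lemma-u-l2-linear}. Your added care with the boundary terms under \eqref{main-perturbation-free} and with the Poincar\'e step (the paper silently uses $\norm{\mathbf{u}}_{L^2}\lesssim\norm{\nabla\mathbf{u}}_{L^2}$, which indeed requires attention to the constant-horizontal-flow mode on $\T\times(0,h)$) only makes the argument more complete.
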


\begin{proof}

Multiplying \eqref{utttt-linear} by $\mathbf{u}_{tt}$, integrating over $\Omega$, we get
\begin{align*}
\begin{aligned}
\norm{\sqrt{\rho_s}
\mathbf{u}_{tt}}_{L^2}^2
+\nu \frac{1}{2}\frac{d}{dt}
\norm{\nabla
\mathbf{u}_t}_{L^2}^2
=\norm{-\delta 
(\mathbf{u}\cdot  \nabla f )(\mathbf{u}_{tt}\cdot \nabla f)}_{L^1}\leq \frac{1}{2}  \norm{\sqrt{\rho_s}
\mathbf{u}_{tt}}_{L^2}^2
+C_0
\norm{\nabla\mathbf{u}}_{L^2}^2.
\end{aligned}
\end{align*}
This implies that
\begin{align}\label{nablau-l2-linear-tt}
\begin{aligned}
\norm{\sqrt{\rho_s}
\mathbf{u}_{tt}}_{L^2}^2
+\nu\frac{d}{dt} \norm{\nabla
\mathbf{u}_t}_{L^2}^2\leq C_0
\norm{\nabla\mathbf{u}}_{L^2}^2.
\end{aligned}
\end{align}
Integrating \eqref{nablau-l2-linear-tt} with respective time yields 
\begin{align*}
\begin{aligned}
\nu\norm{\nabla
\mathbf{u}_{t}}_{L^2}^2
+\int_0^t 
\norm{\sqrt{\rho_s}
\mathbf{u}_{\tau\tau}}_{L^2}^2
\,d\tau\leq 
C_0+
C_0\int_0^t
\norm{\nabla\mathbf{u}(\tau)}_{L^2}^2\,d\tau.
\end{aligned}
\end{align*}
This infers the conclusions 
\eqref{estimates-1-nablau-linear-tt} and \eqref{estimates-2-nablau-linear-tt}.

\end{proof}

\subsection{Estimates for $\|\nabla \varrho \|_{
L^{\infty}\left((0,\infty);L^2(\Omega)\right)}$}

\begin{lemma}\label{lemma-nablau_t-l3-linear-rho}
Let $\Omega=\T\times (0,h)$,  $( \mathbf{u},\varrho)$ be the solution of the problem \eqref{main-peturbation-linear}-\eqref{main-perturbation-free} and
with initial data $(\mathbf{u}_0,\varrho_0)\in H^2\left(\Omega\right)\times L^{\infty}\left(\Omega\right) $.
If $(\delta, \nabla f) \equiv (\delta_0, (0,g)) $, we have
\begin{align}\label{estimates-1-nablau-linear-rho}
\begin{aligned}
\nabla \varrho \in L^{\infty}\left((0,\infty);L^2(\Omega)\right).
\end{aligned}
\end{align}
\end{lemma}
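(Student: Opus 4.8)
The plan is to control the two partial derivatives of $\varrho$ separately, exploiting the fact that in the special case $(\delta,\nabla f)\equiv(\delta_0,(0,g))$ the coefficients of the linear system \eqref{main-peturbation-linear} are independent of the horizontal variable $x_1$ (only $\rho_s=\rho_s(x_2)$ depends on $x_2$). First I would differentiate the whole system \eqref{main-peturbation-linear} in $x_1$: writing $(\mathbf{v},\sigma)=(\partial_{x_1}\mathbf{u},\partial_{x_1}\varrho)$, one checks directly that $(\mathbf{v},\sigma)$ solves the \emph{same} linear system, and that the free boundary conditions \eqref{main-perturbation-free} are preserved under $\partial_{x_1}$ (indeed $v_2=\partial_{x_1}u_2=0$ and $\partial_{x_2}v_1=\partial_{x_1}\partial_{x_2}u_1=0$ on $x_2=0,h$). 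Hence the energy identity of Lemma \ref{lemma-u-l2-linear}, applied to $(\mathbf{v},\sigma)$, gives at once the uniform bound $\|\partial_{x_1}\varrho(t)\|_{L^2}^2\le C_0$ together with $\int_0^\infty\|\nabla\partial_{x_1}\mathbf{u}\|_{L^2}^2\,d\tau<\infty$. This disposes of the horizontal derivative and, via the Poincar\'e inequality in the periodic variable, also yields $\partial_{x_1}u_1\in L^2((0,\infty);L^2(\Omega))$.

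Second, for the vertical derivative I would split $\varrho$ into its horizontal mean $\bar\varrho(x_2,t)=\tfrac1{|\mathbb{T}|}\int_{\mathbb{T}}\varrho\,dx_1$ and a fluctuation $\varrho'$. Incompressibility and the boundary condition force $\bar u_2\equiv 0$ (since $\partial_{x_2}\bar u_2=-\overline{\partial_{x_1}u_1}=0$ and $u_2|_{x_2=0,h}=0$), so the density equation gives $\partial_t\bar\varrho=\delta_0 g\,\bar u_2=0$; thus $\bar\varrho(\cdot,t)=\bar\varrho_0$ is stationary and $\partial_{x_2}\bar\varrho=\partial_{x_2}\bar\varrho_0$ stays bounded. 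For the fluctuation, using $\partial_{x_2}u_2=-\partial_{x_1}u_1$ one finds $\partial_t(\partial_{x_2}\varrho')=-\delta_0 g\,\partial_{x_1}u_1$, whence $\partial_{x_2}\varrho'(t)=\partial_{x_2}\varrho_0'-\delta_0 g\int_0^t\partial_{x_1}u_1\,d\tau$. The whole matter therefore reduces to proving that $\int_0^t\partial_{x_1}u_1\,d\tau$ remains bounded in $L^2$, for which it suffices to show $\int_0^\infty\|\partial_{x_1}u_1(\tau)\|_{L^2}\,d\tau<\infty$. (Note this argument needs $\varrho_0\in H^1$, slightly more than the stated $L^\infty$, in order for $\partial_{x_2}\bar\varrho_0$ and $\partial_{x_2}\varrho_0'$ to make sense.)

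The hard part is exactly this last integrability. The energy identities furnish only $L^2$-in-time control of $\nabla\mathbf{u}$ and of $\nabla\partial_{x_1}\mathbf{u}$, which does not bound the $L^1$-in-time integral above; and a naive Gr\"onwall estimate on $\|\partial_{x_2}\varrho\|_{L^2}$ yields only $O(\sqrt t)$ growth. To close it I would establish an integrable (in fact exponential) decay rate for the fluctuation energy $E'(t)=\tfrac12\|\sqrt{\rho_s}\,\mathbf{u}\|_{L^2}^2+\tfrac1{2\delta_0}\|\varrho'\|_{L^2}^2$ and for its $\partial_{x_1}$-differentiated analogue. Since $\bar\varrho$ is frozen, the genuine dissipation $-\nu\|\nabla\mathbf{u}\|_{L^2}^2$ acts on the zero-horizontal-mean part, and the stable stratification $\nabla\rho_s=-\delta_0(0,g)$ with $\delta_0>0$ supplies the buoyancy restoring mechanism; combining this dissipation with a hypocoercive cross term (of the form $\varepsilon\int_\Omega\rho_s\,\mathbf{u}\cdot\nabla\Psi\,d\mathbf{x}$ with $\Delta\Psi=\varrho'$, using Poincar\'e in $x_1$ to coerce $\|\varrho'\|_{L^2}$) should give $\tfrac{d}{dt}\widetilde E'\le -c\,\widetilde E'$ for an equivalent modified energy $\widetilde E'$. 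Exponential decay of $E'$ then yields $\|\partial_{x_1}u_1\|_{L^2}\le\|\nabla\mathbf{u}\|_{L^2}\lesssim e^{-ct}$, hence the required $L^1$-bound in time and finally $\|\nabla\varrho\|_{L^\infty((0,\infty);L^2)}\le C_0$. The main obstacle, and the reason the hypotheses here are special, is that the vertical direction is neither translation invariant (because $\rho_s=\rho_s(x_2)$) nor compatible with the free boundary condition under $\partial_{x_2}$, so the clean $x_1$-commutation argument cannot be repeated; it is precisely the constant-coefficient, flat-domain structure that makes the hypocoercive decay available.
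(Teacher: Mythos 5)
Your first step is sound: since $\rho_s$, $\delta_0$ and $\nabla f=(0,g)$ are independent of $x_1$ and the free boundary conditions \eqref{main-perturbation-free} commute with $\partial_{x_1}$, the pair $(\partial_{x_1}\mathbf{u},\partial_{x_1}\varrho)$ solves \eqref{main-peturbation-linear} again, and the basic energy identity bounds $\norm{\partial_{x_1}\varrho}_{L^2}$ uniformly (your remark that this needs $\varrho_0\in H^1$ rather than $L^\infty$ applies equally to the paper's own proof, whose constant $C_0$ contains $\norm{\nabla\varrho_0}_{L^2}$). The genuine gap is in the vertical derivative. Your reduction is correct --- $\partial_t\partial_{x_2}\varrho=-\delta_0 g\,\partial_{x_1}u_1$, so everything hinges on $\int_0^\infty\norm{\partial_{x_1}u_1(t)}_{L^2}\,dt<\infty$ --- but the hypocoercive exponential decay you invoke to obtain this is asserted (``should give''), not proved, and it is precisely the hard point. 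Concretely: (i) with the cross functional $\int_\Omega\rho_s\,\mathbf{u}\cdot\nabla\Psi\,d\mathbf{x}$, $\Delta\Psi=\varrho'$, differentiating in time brings in $-\int_\Omega\nabla P\cdot\nabla\Psi\,d\mathbf{x}$, which does \emph{not} vanish ($\nabla\Psi$ is curl-free, not divergence-free) and is controlled only through the nonlocal elliptic problem for $P$ with the variable coefficient $\rho_s(x_2)$; decay statements of this kind for viscous, non-diffusive stratified systems are the subject of the Boussinesq literature cited in the introduction, not a routine auxiliary step. (ii) Even granting $E'(t)\lesssim e^{-ct}$, this gives decay of $\norm{\mathbf{u}}_{L^2}$, not of $\norm{\nabla\mathbf{u}}_{L^2}$; your final inequality $\norm{\partial_{x_1}u_1}_{L^2}\le\norm{\nabla\mathbf{u}}_{L^2}\lesssim e^{-ct}$ would require a uniform higher-order bound to interpolate against, and in this paper the uniform $H^2$ control of the linear flow (proof of Theorem \ref{theorem-4}) is itself derived \emph{from} the present lemma, so that route is circular as it stands. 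The available estimates give only $\nabla\mathbf{u}\in L^2_t L^2_x$, hence $\int_0^T\norm{\partial_{x_1}u_1}_{L^2}\,dt=O(\sqrt T)$, exactly the obstruction you acknowledge.

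The paper closes the estimate with no decay rate at all. It couples the vorticity equation with the equation for the full density gradient, $\rho_s\partial_t\omega+\nabla^{\perp}\rho_s\cdot\mathbf{u}_t=\nu\Delta\omega-g\partial_{x_1}\varrho$ and $\frac{1}{\delta_0}\partial_t\nabla\varrho=g\nabla u_2$ (system \eqref{linear-nabla-two}), and observes that in the combined energy the two dangerous cross terms cancel exactly: $\int_\Omega\bigl(\omega\,\partial_{x_1}\varrho-\nabla u_2\cdot\nabla\varrho\bigr)\,d\mathbf{x}=0$, by integration by parts using $\nabla\cdot\mathbf{u}=0$, periodicity in $x_1$, and the free boundary conditions on the flat strip. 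The only surviving term, $-\int_\Omega\nabla^{\perp}\rho_s\cdot\mathbf{u}_t\,\omega\,d\mathbf{x}$, is absorbed into $\frac{\nu}{2}\norm{\nabla\omega}_{L^2}^2+C\norm{\mathbf{u}_t}_{L^2}^2$, and $\norm{\mathbf{u}_t}_{L^2}^2$ is integrable on $(0,\infty)$ by the preceding linear energy lemmas; integrating in time then bounds $\norm{\sqrt{\rho_s}\,\omega}_{L^2}^2+\frac{1}{\delta_0}\norm{\nabla\varrho}_{L^2}^2$ uniformly in one stroke, with no need to separate mean and fluctuation or to know any rate. If you wish to keep your decomposition, you must first supply a genuine proof of the time-integrability of $\norm{\partial_{x_1}u_1}_{L^2}$; as written, that step is open and the lemma is not established by your argument.
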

\begin{proof}
If $(\delta, \nabla f) \equiv (\delta_0, (0,g)) $, we see that $\omega=\nabla \times \mathbf{u}$ and 
$\nabla \varrho$ solves the following system
\begin{align}\label{linear-nabla-two}
\begin{cases}
\rho_s\frac{\partial \omega}{\partial t}
+\nabla^{\perp}\rho_s\cdot \mathbf{u}_t
= \nu \Delta \omega-g\partial_{x_1}\rho, \mathbf{x}\in \Omega=\T\times (0,h),
 \\ \frac{1}{\delta_0}\partial_t\nabla \varrho=g\nabla u_2 ,\quad \mathbf{x}\in \Omega=\T\times (0,h).
\end{cases}
\end{align}

Multiplying $\eqref{linear-nabla-two}_1$ and
$\eqref{linear-nabla-two}_2$
 by $\omega$ and $\nabla \varrho$,  integrating over $\Omega$, we get
\begin{align}\label{rho=nabla-1}
\begin{aligned}
&\frac{1}{2}
\frac{d}{dt}
\left(
\norm{\sqrt{\rho_s}\omega}_{L^2}^2
\frac{1}{\delta_0}\norm{\nabla \varrho}_{L^2}^2\right)
+\nu\norm{\nabla \omega}_{L^2}^2
\\&=-g\int_{\Omega}\left(\omega
\partial_{x_1}\rho
-\nabla u_2\cdot \nabla \varrho\right)
\,d\mathbf{x}
-\int_{\Omega}
\nabla^{\perp}\rho_s\cdot \mathbf{u}_t\omega
\,d\mathbf{x}
\\&=
-\int_{\Omega}
\nabla^{\perp}\rho_s\cdot \mathbf{u}_t\omega
\,d\mathbf{x}
\leq \nu\norm{\nabla \omega}_{L^2}^2/2
+C\norm{\mathbf{u}_t}_{L^2}^2.
\end{aligned}
\end{align}
where we have used the following identity
\[
\begin{aligned}
\int_{\Omega}\left(\omega
\partial_{x_1}\rho
-\nabla u_2\cdot \nabla \varrho\right)
\,d\mathbf{x}
&=-\int_{\Omega}
\partial_{x_2}u_1
\partial_{x_1}\rho
\,d\mathbf{x}
-\int_{\Omega}
\partial_{x_2}u_2
\partial_{x_2}\rho
\,d\mathbf{x}\\
&=
-\int_{\Omega}
\partial_{x_2}u_1
\partial_{x_1}\rho
\,d\mathbf{x}
+\int_{\Omega}
\partial_{x_1}u_1
\partial_{x_2}\rho
\,d\mathbf{x}\\
&=-\int_{\T}
u_1(h)
\partial_{x_1}\rho(h)
\,dx_1
+\int_{\T}
u_1(0)
\partial_{x_1}\rho(0)
\,dx_1
\\&\quad+
\int_{\Omega}
u_1
\partial_{x_1}
\partial_{x_2} \rho
\,d\mathbf{x}
+\int_{\Omega}
\partial_{x_1}u_1
\partial_{x_2}\rho
\,d\mathbf{x}\\
&=-
\int_{\Omega}
\partial_{x_1}u_1
\partial_{x_2} \rho
\,d\mathbf{x}
+\int_{\Omega}
\partial_{x_1}u_1
\partial_{x_2}\rho
\,d\mathbf{x}=0.
\end{aligned}
\]
Integrating \eqref{rho=nabla-1} with respective time yields 
\[
\norm{\sqrt{\rho_s}\omega}_{L^2}^2+
\frac{1}{\delta_0}\norm{\nabla \varrho}_{L^2}^2
+\nu\int_0^t\norm{\nabla \omega(\tau)}_{L^2}^2\,d\tau\leq C_0
+C_0
\int_0^t\norm{\mathbf{u}_t}_{L^2}^2\,d\tau
\]
This, together with Lemma \ref{lemma-nablau-l2}, gives \eqref{estimates-1-nablau-linear-rho}.
\end{proof}

\section{Proofs of main theorem}

\subsection{Proof of Theorem \ref{theorem-1}}
\begin{proof}
Let us first prove  (1) of Theorem \ref{theorem-1}.  Lemma \ref{lemma-nablau_t-l5} says that to prove
\eqref{theorem-1-conc-0}, it only needs to prove $
\mathbf{u} \in  L^{p}\left(\left(0,\infty\right);W^{1,p}(\Omega)\right)$.
Utilizing Gagliardo-Nirenberg interpolation, we have
\[
\norm{\mathbf{u}}_{W^{1,p}}^p\leq \norm{\mathbf{u}}_{H^1}^{2}
\norm{\mathbf{u}}_{H^2}^{p-2},\quad 2\leq p.
\]
This yields \eqref{theorem-1-conc-0}.
Using Lemma \ref{lemma-u-l2}, Lemma \ref{lemma-nablau_t-l5} and the above inequality, we have
\[
\int_{0}^{+\infty}\norm{\mathbf{u}(t)}_{W^{1,p}}^p\,dt
\leq C_0
\int_{0}^{+\infty}\norm{\nabla \mathbf{u}(t)}_{L^{2}}^2\,dt<\infty.
\]
It is not hard to see that
\eqref{theorem-1-conc-1} and  \eqref{theorem-1-conc-2} follows Lemma \ref{lemma-nablau-l2}
 and Lemma \ref{lemma-nablau_t-l3}. The conclusion \eqref{theorem-1-conc-3} is obtained by using the stoke estimates
given in Lemma \ref{lem:stokes} and taking 
\[
g=-\left(\varrho+\rho_s\right)\frac{\partial \mathbf{u}}{\partial t}-\left(\varrho+\rho_s\right)(\mathbf{u} \cdot \nabla )\mathbf{u}-\varrho\nabla f
\in L^{\infty}\left(\left(0,\infty\right);L^2(\Omega)\right).
\]
 The conclusion \eqref{theorem-1-conc-4} is derived from \eqref{density-bounds}.
 
Conclusion (2) of Theorem \ref{theorem-1} is established by combining the results of Lemma \ref{lemma-u_w-2p} through Lemma \ref{lemma-rho-2p-2}. Conclusion (3) of Theorem \ref{theorem-1} is a direct consequence of Lemma \ref{lemma-u-tt-34-1}--Lemma \ref{lemma-u-h-34}.
\end{proof}

\subsection{Proof of Theorem \ref{theorem-2}}
\begin{proof}
\textbf{Proof for (1)}: We have shown that
\[
\begin{aligned}
&\partial_t
\int_{\Omega}\frac{2}{\nu}\left(\varrho+\rho_s\right) \mathbf{u}\cdot \nabla f\,dx+
\frac{d}{dt} \int_\Omega |\nabla \mathbf{u}|^2 d\mathbf{x}  +  \frac{2}{\nu}\|\sqrt{\varrho+\rho_s}\mathbf{u}_t\|_{L^2}^2 \\& \le 
C_0\norm{\nabla \mathbf{u}}_{L^2}^2
\left(2+C_0+\norm{\nabla \mathbf{u}}_{L^2}^2\right)
\ln \left(2+C_0+\norm{\nabla \mathbf{u}}_{L^2}^2\right)
\\&\quad+2C_0\norm{\nabla \mathbf{u}}_{L^2}^3
\sqrt{\left(2+C_0+\norm{\nabla \mathbf{u}}_{L^2}^2\right)
\ln \left(2+C_0+\norm{\nabla \mathbf{u}}_{L^2}^2\right)}
\\&\quad+C_0
\norm{\nabla \mathbf{u}}_{L^2}^2
+C_0
\norm{\nabla \mathbf{u}}_{L^2}^{3}
+C_0\norm{\nabla \mathbf{u}}_{L^2}^{2}.
\end{aligned}
\]
Note that 
\[
\abs{-\int_{\Omega}\varrho \mathbf{u}_{t}\cdot \nabla f\,d\mathbf{x} }
\leq 
\frac{1}{2}  \|\sqrt{\varrho+\rho_s}\mathbf{u}_t\|_{L^2}^2+
C_0\norm{\nabla f}_{L^{\infty}}\abs{\Omega}.
\]
This, combining with \eqref{nablau-l2}, yields that
\[
\begin{aligned}
&\abs{\frac{d}{dt} \int_\Omega |\nabla \mathbf{u}|^2 dx + 
\frac{1}{\nu}\|\sqrt{\varrho+\rho_s}\mathbf{u}_t\|_{L^2}^2} \\& \le 
C_0\norm{\nabla \mathbf{u}}_{L^2}^2
\left(2+C_0+\norm{\nabla \mathbf{u}}_{L^2}^2\right)
\ln \left(2+C_0+\norm{\nabla \mathbf{u}}_{L^2}^2\right)
\\&\quad+C_0\norm{\nabla \mathbf{u}}_{L^2}^3
\sqrt{\left(2+C_0+\norm{\nabla \mathbf{u}}_{L^2}^2\right)
\ln \left(2+C_0+\norm{\nabla \mathbf{u}}_{L^2}^2\right)}
\\&\quad+C_0
\norm{\nabla \mathbf{u}}_{L^2}^2
+C_0
\norm{\nabla \mathbf{u}}_{L^2}^{3}+C.
\end{aligned}
\]
Let us denote 
\[
\begin{aligned}
\begin{aligned}
g(t) =y(t)=\norm{\nabla \mathbf{u}}_{L^2}^2,\quad
h(t)=\frac{1}{\nu}\|\sqrt{\varrho+\rho_s}\mathbf{u}_t\|_{L^2}^2,
x(t)=\int_{\Omega}\frac{2}{\nu}\left(\varrho+\rho_s\right) \mathbf{u}\cdot \nabla f\,dx,
\end{aligned},\\
\begin{aligned}
q(z)=&
\left(2+C_0+z\right)
\ln
 \left(2+C_0+z\right)
 \\&+\sqrt{2+C_0+z}\sqrt{\left(2+C_0+z\right)
\ln
 \left(2+C_0+z\right)}
+C_0\sqrt{z},\quad
w(z)=q(z)+C_0.
\end{aligned}
\end{aligned}
\]
Then, they satisfy the two inequalities \eqref{two-equ}. The conclusions
(3) of Lemma \ref{one-inequality-2} show that
\[
\norm{\nabla \mathbf{u}}_{H^1}^2\to 0\quad \text{for}\quad t\to +\infty.
\]
Hence, \eqref{theorem-2-conc-1} follows 
the Gagliardo–Nirenberg interpolation inequality
\[
\norm{\nabla \mathbf{u}}_{W^{1,p}}
\leq 
\norm{\nabla \mathbf{u}}_{H^{1}}^{2/p}
\norm{\nabla \mathbf{u}}_{H^{2}}^{1-2/p}.
\]

We can get from \eqref{utt-nablautt} that
\begin{align}\label{deriv-one}
\begin{aligned}
&\abs{\frac{d}{dt}\int_{\Omega}
\left(\varrho+\rho_s\right)\abs{\mathbf{u}_{t}}^2\,d
\mathbf{x}+\nu\int_{\Omega}
\abs{\nabla \mathbf{u}_{t}}^2\,d\mathbf{x}}
\\&\leq C_0\norm{\nabla \mathbf{u}}_{L^2}^2\norm{\sqrt{\varrho+\rho_s}\mathbf{u}_t}_{L^2}^2
+C_0
\norm{\nabla \mathbf{u}}_{L^2}^2.
\end{aligned}
\end{align}
Let us recall from \eqref{estimates-1-nablau} that $\sqrt{\varrho+\rho_s}\mathbf{u}_{t}$ satisfies
\[
\int_0^{\infty} \|\sqrt{\varrho+\rho_s}\mathbf{u}_{t}\|_{L^2}^2\,dt<+\infty.
\]
This, combining with \eqref{deriv-one} and Barbălat's Lemma, we get 
\[
\lim_{t\to +\infty}\|\sqrt{\varrho+\rho_s}\mathbf{u}_{t}\|_{L^2}^2=0.
\]
This gives the result \eqref{theorem-2-conc-2}.

 For any $\epsilon>0$ and any $ \mathbf{v}\in L^2(\Omega)$, there exists
 $ \tilde{\mathbf{v}}\in H_0^1(\Omega)$ such that
 \[
 \norm{\mathbf{v}-\tilde{\mathbf{v}}}_{L^2}<\epsilon.
 \]
 This infers that for any $\epsilon>0$ and any $ \mathbf{v}\in L^2(\Omega)$, we have
 \[
 \abs{\int_{\Omega}\Delta \mathbf{u}\cdot  \mathbf{v}\,d \mathbf{x}}
 \leq  \norm{\mathbf{v}-\tilde{\mathbf{v}}}_{L^2}
 \norm{\Delta \mathbf{u}}_{L^2}+
 \norm{\tilde{\mathbf{v}}}_{H^1}\norm{\mathbf{u}}_{H^1}<C_0\epsilon
 + \norm{\tilde{\mathbf{v}}}_{H^1}\norm{\mathbf{u}}_{H^1}
 <2C_0\epsilon
 \]
provided $t$ is sufficiently large. This deduces that
\[
\Delta \mathbf{u}\rightharpoonup 0\quad \text{in}\quad L^2(\Omega).
\]
 
 Finally, using \eqref{theorem-2-conc-1}-\eqref{theorem-2-conc-3},
for any $ \mathbf{v}\in L^2(\Omega)$
 we have
\begin{align}\label{424-proof-5-2}
\begin{aligned}
\int_{\Omega}\abs{ \mathbf{v}\cdot(\nabla P+\varrho \nabla f)}\,d\mathbf{x} 
\leq &\norm{\sqrt{\varrho +\rho_s}\mathbf{u}_t}_{L^2}
 \norm{\mathbf{v}}_{L^2}
 +C\norm{\nabla \mathbf{u}}_{L^4}\norm{\mathbf{u}}_{L^4}\norm{\mathbf{v}}_{L^2}
 \\&+ \abs{\int_{\Omega}\Delta \mathbf{u}\cdot  \mathbf{v}\,d \mathbf{x}}\to 0\quad \text{as}\quad t\to \infty,
\end{aligned}
\end{align}
where we have used
\[
 \nabla P+\rho \nabla f
= \nu \Delta \mathbf{u}
 -(\varrho +\rho_s)\frac{\partial \mathbf{u}}{\partial t}-(\varrho +\rho_s)(\mathbf{u} \cdot \nabla )\mathbf{u}.
 \]
Then, the conclusion \eqref{theorem-2-conc-4} follows from \eqref{424-proof-5-2}.

\textbf{Proof for (2)}: Taking the limits $t\to+\infty$ of both sides of
the identity \eqref{estimates-2}, we have
\begin{align}\label{new-two}
\begin{aligned}
\lim_{t\to\infty} \int_\Omega \varrho(t) f d\mathbf{x}
=\frac{\norm{\sqrt{\varrho_0+\rho_s}\mathbf{u}_0}_{L^2}^2}{2} +\int_\Omega \varrho_0 f d\mathbf{x}-\nu \int_0^{\infty} \|\nabla \mathbf{u}(\tau)\|_{L^2}^2\,d\tau=I_1.
\end{aligned}
\end{align}
This gives $\eqref{theorem-2-conc-4-1}_1$ and $\eqref{I-11a}_1$.
To prove $\eqref{theorem-2-conc-4-1}_2$ and $\eqref{I-11a}_2$, let us introduce the new variables
\begin{align*}
\begin{cases}
\mathbf{v}=\mathbf{u},\\
\varrho=\theta -\rho_s+\rho^*(x,y),\\
p=q-p_s+h(x,y),
\end{cases}
\end{align*}
where $\rho^*(x,y)$ and $h(x,y)$ are defined by the following equations
\begin{align*}
\begin{aligned}
\rho^*=-\gamma f(x,y)+\beta,\quad \gamma>0,\quad \nabla h=-\rho^*\nabla {f},\quad \nabla p_s=-\rho_s\nabla f.
\end{aligned}
\end{align*}
Then, it can be observed that $(\mathbf{v},\theta,q)$ satisfies the following system:
   \begin{align}\label{213-1}
\begin{cases}
\left(\theta+\rho^*\right)\frac{\partial \mathbf{v}}{\partial t}+\left(\theta+\rho^*\right) (\mathbf{v} \cdot \nabla )\mathbf{v} = \nu \Delta \mathbf{v}- \nabla q-\theta \nabla f,
 \\ \frac{\partial \theta}{\partial t}+(\mathbf{v}\cdot  \nabla)\theta
 =\gamma (\mathbf{v}\cdot  \nabla) {f},
\\ \nabla \cdot  \mathbf{v}=0,
\end{cases}
\end{align}

For the system \eqref{213-1}, we define a general energy function:
\[
E_{\gamma}(t) =   \frac{\gamma}{2} \int_\Omega \left(\theta+\rho^*\right) |\mathbf{v}|^2 d\mathbf{x} + \frac{1}{2} \int_\Omega \theta^2 d\mathbf{x} 
\]
Differentiating the energy function with respect to time, one gets
\begin{align}\label{energy-function-2}
\frac{dE}{dt} = \gamma \int_\Omega \left(\theta+\rho^*\right) \mathbf{v}_t \cdot \mathbf{v} d\mathbf{x} + \frac{\gamma}{2} \int_\Omega \theta_t |\mathbf{v}|^2 d\mathbf{x}+ \int_\Omega \theta_t \theta d\mathbf{x}.
\end{align}
From the continuity equation $\eqref{213-1}_2$, the second term becomes:
\[
\frac{\gamma}{2} \int_\Omega \theta_t |\mathbf{v}|^2 d\mathbf{x} = -\frac{\gamma}{2} \int_\Omega (\mathbf{v} \cdot \nabla \left(\theta+\gamma f\right)) |\mathbf{v}|^2 d\mathbf{x}.
\]
From the continuity equation $\eqref{213-1}_1$,
we compute the first term on the right hand side of \eqref{energy-function-2}:
\begin{align}\label{energy-function-3}
\begin{aligned}
\gamma \int_\Omega \left(\theta+\rho^*\right)\mathbf{v}_t \cdot \mathbf{v} d\mathbf{x} &= -\gamma\int_\Omega \left(\theta+\rho^*\right)(\mathbf{v} \cdot \nabla \mathbf{v}) \cdot \mathbf{v} d\mathbf{x} - \gamma\int_\Omega \nabla q \cdot \mathbf{v} d\mathbf{x} \\
&\quad + \gamma\nu \int_\Omega \Delta \mathbf{v} \cdot \mathbf{v} d\mathbf{x} -\gamma \int_\Omega \theta \nabla f \cdot \mathbf{v} d\mathbf{x}.
\end{aligned}
\end{align}

Now analyze each term on the right hand side of \eqref{energy-function-3}. For convection term, we get
\[
\begin{aligned}
\gamma\int_\Omega \left(\theta+\rho^*\right) (\mathbf{v} \cdot \nabla \mathbf{v}) \cdot \mathbf{v} d\mathbf{x} &=\gamma \int_\Omega  \left(\theta+\rho^*\right) \mathbf{v} \cdot \nabla \left( \frac{1}{2} |\mathbf{v}|^2 \right) d\mathbf{x} \\
&= -\gamma\int_\Omega \nabla \cdot ( \left(\theta+\rho^*\right)\mathbf{v}) \cdot \frac{1}{2} |\mathbf{v}|^2 d\mathbf{x} \\
&= -\frac{\gamma}{2} \int_\Omega (\mathbf{v} \cdot \nabla \left(\theta+\rho^*\right)) |\mathbf{v}|^2 d\mathbf{x},
\end{aligned}
\]
where we used $\nabla \cdot (\left(\theta+\rho^*\right) \mathbf{v}) = \mathbf{v} \cdot \nabla \left(\theta+\rho^*\right)$ due to $\eqref{213-1}_3$.
Regarding the pressure term, it gives
\[
\int_\Omega \nabla q \cdot \mathbf{v} d\mathbf{x} = -\int_\Omega q(\nabla \cdot \mathbf{v}) d\mathbf{x}+ \int_{\partial\Omega} q \mathbf{v} \cdot \mathbf{n} dS = 0,
\]
since $\nabla \cdot \mathbf{v} = 0$ and $\mathbf{v}|_{\partial\Omega} = 0$.
As for the viscous term, one has
\[
\nu \int_\Omega \Delta \mathbf{v} \cdot \mathbf{v} d\mathbf{x} 
= -\nu \int_\Omega |\nabla \mathbf{v}|^2 d\mathbf{x} = -\nu \|\nabla \mathbf{v}\|_{L^2}^2.
\]
For the final term on the right hand side of \eqref{energy-function-2}, we have
\[
\int_\Omega \theta_t \theta d\mathbf{x}
=\int_\Omega \left(
 -(\mathbf{v}\cdot  \nabla)\theta +\gamma (\mathbf{v}\cdot  \nabla)f\right)
  \theta d\mathbf{x}= \gamma
  \int_\Omega \theta  (\mathbf{v}\cdot  \nabla) f
 d\mathbf{x}
\]

Substituting all terms back into (2), we have
\begin{align}\label{Egamma}
\begin{aligned}
\frac{dE_{\gamma}}{dt} &= -\frac{\gamma}{2} \int_\Omega (\mathbf{v} \cdot \nabla \left(\varrho+\rho_s\right)) |\mathbf{v}|^2 d\mathbf{x} + \left[ \frac{\gamma}{2} \int_\Omega (\mathbf{v} \cdot \nabla \left(\varrho+\rho_s\right)) |\mathbf{v}|^2 d\mathbf{x}\right] \\
& \quad-\gamma \nu \|\nabla \mathbf{v}\|_{L^2}^2 
-\gamma \int_\Omega \theta \nabla f \cdot \mathbf{v} d\mathbf{x} +\gamma
  \int_\Omega \theta  (\mathbf{v}\cdot  \nabla) f
 d\mathbf{x}
\\
&= -\gamma\nu \|\nabla \mathbf{v}\|_{L^2}^2,
\end{aligned}
\end{align}
where we have used the following identity
\[
\int_\Omega f \nabla \cdot (\rho_s \mathbf{u}) d\mathbf{x}
=-\int_\Omega \rho_s \nabla f \cdot \mathbf{u} d\mathbf{x}=
\int_\Omega \nabla p_s \cdot \mathbf{u} d\mathbf{x}=
0.
\]

Integrating \eqref{Egamma} from $0$ to $t$, it yields
\[
E_{\gamma}(t)+\gamma\nu \int_0^t\|\nabla \mathbf{v}(\tau)\|_{L^2}^2\,d\tau=
E_{\gamma}(0).
\]
With the help of \eqref{theorem-2-conc-1}, we get that
\[
\lim_{t\to+\infty}\norm{\theta}_{L^2}^2
 +2\gamma\nu \int_0^{\infty}\|\nabla \mathbf{v}(t)\|_{L^2}^2\,dt=
 2E_{\gamma}(0).
\]
This implies that
\begin{align}\label{Egamma-2}
\begin{aligned}
\lim_{t\to+\infty}\norm{\varrho+\rho_s+\gamma f(x,y)-\beta}_{L^2}^2&=
 \gamma\norm{\sqrt{\varrho+\rho_s}\mathbf{u}_0}_{L^2}^2+
 \norm{\varrho_0+\rho_s+\gamma f(x,y)-\beta}_{L^2}^2\\&\quad- 2\gamma\nu \int_0^{\infty}\|\nabla \mathbf{u}(t)\|_{L^2}^2\,dt=I_2.
 \end{aligned}
\end{align}
which gives \eqref{theorem-2-conc-4} and $\eqref{I-11a}_2$.

Finally, we aim to show $\eqref{I-11a}_3$. Testing \eqref{new-two} by $2\gamma$, we have
\begin{align}\label{new-two-1}
\begin{aligned}
2\gamma\nu \int_0^{\infty} \|\nabla \mathbf{u}(\tau)\|_{L^2}^2\,d\tau
=\gamma\norm{\sqrt{\varrho_0+\rho_s}\mathbf{u}_0}_{L^2}^2 +2\gamma\int_\Omega \varrho_0 f d\mathbf{x}-2\gamma I_1 .
\end{aligned}
\end{align}
This means that we can replace $2\gamma\nu \int_0^{\infty} \|\nabla \mathbf{u}(\tau)\|_{L^2}^2\,d\tau$ by $$\gamma\norm{\sqrt{\varrho_0+\rho_s}\mathbf{u}_0}_{L^2}^2 
+2\gamma\int_\Omega \varrho_0 f d\mathbf{x}-2\gamma I_1$$ in \eqref{Egamma-2} to get that
\[
2\gamma\int_\Omega \varrho_0 f d\mathbf{x}+I_2=
 \norm{\varrho_0+\rho_s+\gamma f(x,y)-\beta}_{L^2}^2+2\gamma I_1.
\]

\textbf{Proof for (3)}: Based on the decomposition 
$ \varrho\nabla f = \mathbf{w} + \nabla q$ and $\eqref{main-peturbation}_1$, we have
\begin{align}
\begin{aligned}
\norm{\nu \mathbb{P}\Delta  \mathbf{u} -  \mathbf{w} }_{L^2}\leq
 \norm{\sqrt{\varrho +\rho_s}\frac{\partial \mathbf{u}}{\partial t}}_{L^2}+
  \norm{(\varrho +\rho_s)(\mathbf{u} \cdot \nabla )\mathbf{u}}_{L^1}
  \to 0\quad \text{as}\quad t\to+\infty.
\end{aligned}
\end{align}
This infers the conclusion \eqref{theorem-2-conc-4-1-case-1}, and
the conclusion \eqref{theorem-2-conc-4-1-case-3} then follows.
Note that the uniform boundedness 
of $\norm{\mathbf{u}}_{H^2}$
and \eqref{theorem-2-conc-1} means that
\[
\nu \mathbb{P}\Delta  \mathbf{u}(t)
\rightharpoonup 0\quad \text{in}\quad L^2(\Omega)\quad
\text{for}\quad
t\to\infty.
\]
This, together with \eqref{theorem-2-conc-4-1-case-1}, implies
\eqref{theorem-2-conc-4-1-case-2}.

\textbf{Proof for (4)}: \textbf{Necessity}.
Because $
\mathbf{w}=\mathbb{P}\varrho\nabla f
\rightharpoonup 0~ \text{in}~L^2(\Omega)$, if 
$\varrho $ converges to a steady state $\rho^* $ in $L^2(\Omega)$
satisfying $\mathbb{P}\rho^* \nabla f=0$,
we have
$(\mathbb{I}-\mathbb{P})\varrho\nabla f
\to (\mathbb{I}-\mathbb{P})\rho^*\nabla f=\rho^*\nabla f~\text{in}~L^2(\Omega)$ and $\norm{\rho^*+\rho_s }_{L^2}=\norm{\varrho_0+\rho_s }_{L^2}$
follows from $
\norm{\varrho+\rho_s }_{L^2}=\norm{\varrho_0+\rho_s }_{L^2}$. 

\textbf{Sufficiency}. If $(\mathbb{I}-\mathbb{P})\varrho\nabla f
\to \rho^*\nabla f ~ \text{in}~L^2(\Omega)$,  
we get from 
 $\mathbb{P}\varrho\nabla f
\rightharpoonup 0~ \text{in}~L^2(\Omega)$
that
\[
\varrho\nabla f\rightharpoonup  \rho^*\nabla f~ \text{in}~L^2(\Omega),
\]
by which and $\abs{\partial_{x_1} f}\geq f_0>0$ or $\abs{\partial_{x_2} f}\geq f_0>0$, one gets
\[
\varrho+\rho_s\rightharpoonup  \rho^*+\rho_s~ \text{in}~L^2(\Omega).
\]
This, combining with $\norm{\varrho+\rho_s }_{L^2}
=\norm{\varrho_0+\rho_s }_{L^2}=\norm{\rho^*+\rho_s }_{L^2}$, implies
\[
\varrho+\rho_s\to \rho^*+\rho_s\quad \text{as}\quad
t\to\infty.
\]

Finally, we infer from
\[
\nu \Delta \mathbf{u}- (\nabla P-\rho^*\nabla f)=
\left(\varrho+\rho_s\right)\frac{\partial \mathbf{u}}{\partial t}+\left(\varrho+\rho_s\right)(\mathbf{u} \cdot \nabla )\mathbf{u}
+\varrho\nabla f-\rho^*\nabla f
\]
and Lemma \ref{lem:stokes} that as $t\to+\infty$, one has
\begin{align*}
&\norm{\Delta \mathbf{u}}_{L^2}
+\norm{\nabla P-\rho^*\nabla f}_{L^2}
\leq C_0
\norm{\mathbf{u}_t}_{L^2}+C_0\norm{(\mathbf{u} \cdot \nabla )\mathbf{u}}_{L^2}
+\norm{\varrho\nabla f-\rho^*\nabla f}_{L^2}\to 0,\\
&\norm{\nabla P-\varrho\nabla f}_{L^2}
\leq \norm{\nabla P-\rho^*\nabla f}_{L^2}
+\norm{\varrho\nabla f-\rho^*\nabla f}_{L^2}\to 0.
\end{align*}

\end{proof}

\subsection{Proof of Theorem \ref{theorem-3}}
\begin{proof}
We get from \eqref{new-two} that
         \begin{align*}
    &\int_\Omega \varrho f d\mathbf{x}\to 0,\quad t\to \infty,
     \end{align*}
    if and only if for any $\gamma>0$ we have
  \begin{align*}
\begin{aligned}
2\gamma \nu \int_0^{\infty}\|\nabla \mathbf{u}(\tau)\|_{L^2}^2\,d\tau=\gamma \norm{\sqrt{\varrho_0+\rho_s}\mathbf{u}_0}_{L^2}^2 +2\gamma \int_\Omega \varrho _0f d\mathbf{x}.
\end{aligned}
  \end{align*}
  Note that we have proved that
  \[
  \begin{aligned}
   2\gamma\nu \int_0^{\infty}\|\nabla \mathbf{u}(t)\|_{L^2}^2\,dt&=
 \gamma\norm{\sqrt{\varrho+\rho_s}\mathbf{u}_0}_{L^2}^2+
 \norm{\varrho_0+\rho_s+\gamma f(x,y)-\beta}_{L^2}^2\\&\quad-\lim_{t\to+\infty}\norm{\varrho+\rho_s+\gamma f(x,y)-\beta}_{L^2}^2.
 \end{aligned}
  \]
  Hence, \eqref{theorem-2-conc-2-2-1} holds if and only  if there exist $\gamma>0$ and $\beta$ such that
  \[
  \begin{aligned}
2\gamma \int_\Omega \varrho_0f d\mathbf{x}+\lim_{t\to+\infty}\norm{\varrho+\rho_s+\gamma f(x,y)-\beta}_{L^2}^2=
 \norm{\varrho_0+\rho_s+\gamma f(x,y)-\beta}_{L^2}^2.
  \end{aligned}
  \]
  
  We get from \eqref{Egamma-2} that 
         \begin{align*}
    &\norm{\varrho +\rho_s-(-\gamma f+\beta)}_{L^{2}} \to 0,\quad t\to \infty,
     \end{align*}
    if and only if there exist $\gamma>0$ and $\beta$ such that
    \[
\begin{aligned}
2\gamma \nu \int_0^{+\infty} \|\nabla \mathbf{u}(\tau)\|_{L^2}^2\,d\tau=
 \gamma\norm{\sqrt{\varrho+\rho_s}\mathbf{u}_0}_{L^2}^2+
 \norm{\varrho_0+\rho_s+\gamma f(x,y)-\beta}_{L^2}^2
 \end{aligned}
\]
Recalling that we have proved
\[
\begin{aligned}
2\gamma \nu \int_0^{\infty} \|\nabla \mathbf{u}(\tau)\|_{L^2}^2\,d\tau=\gamma \norm{\sqrt{\varrho_0+\rho_s}\mathbf{u}_0}_{L^2}^2 +2\gamma \int_\Omega \varrho_0 f d\mathbf{x}
-\lim_{t\to\infty} 2\gamma \int_\Omega \varrho f d\mathbf{x}
\end{aligned}
\]
Hence, \eqref{theorem-2-conc-2-2} holds  if and only if there exist $\gamma>0$ and $\beta$ such that
\[
\begin{aligned}
2\gamma \int_\Omega \varrho_0 f d\mathbf{x} =  \norm{\varrho_0+\rho_s+\gamma f(x,y)-\beta}_{L^2}^2
 +\lim_{t\to\infty}2 \gamma \int_\Omega \varrho f d\mathbf{x}.
\end{aligned}
\]
\end{proof}

\subsection{Proof of Theorem \ref{theorem-4}}

\begin{proof}
Based on Lemma \ref{lemma-u-l2-linear}-Lemma \ref{lemma-nablau_t-l3-linear},
following a similar way proving Theorem \ref{theorem-2}, one can show  \eqref{theorem-2-conc-1-linear} and the convergence of $\norm{\mathbf{u}_t }_{L^{2}}$:
\[
\norm{\mathbf{u}_t }_{L^{2}}\to 0,\quad t\to \infty.
\]
Hence, to prove Theorem \ref{theorem-4}. We only nee to show 
\eqref{theorem-2-conc-3-linear} and
\[
\norm{\nabla \mathbf{u}_t }_{L^{2}}\to 0,\quad t\to \infty.
\]

Using Lemma \ref{lemma-nablau_t-l3-linear}, Lemma \ref
{lemma-nablau_t-l3-linear-t} and
 \eqref{nablau-l2-linear-tt}, we see that
 $\norm{\nabla \mathbf{u}_t }_{L^{2}}\in L^1(0,+\infty)$ and 
 $\norm{\nabla \mathbf{u}_t }_{L^{2}}$ is uniformly continuous.
 This infers \eqref{theorem-2-conc-2-linear}.

We now show \eqref{theorem-2-conc-3-linear}. 
One can get from $\eqref{linear-nabla-two}_1$ that
\begin{align*}
\begin{aligned}
\nu\norm{\nabla \omega}_{L^2}^2=-\norm{\sqrt{\rho_s}\omega \omega_t}_{L^1}-g\int_{\Omega}\omega
\partial_{x_1}\rho
\,d\mathbf{x}
-\int_{\Omega}
\nabla^{\perp}\rho_s\cdot \mathbf{u}_t\omega
\,d\mathbf{x}.
\end{aligned}
\end{align*}
One can get from the preceding equation that
\[
\norm{\mathbf{u}}_{H^2}
\leq  C\nu\norm{\nabla \omega}_{L^2}^2+C
\norm{\mathbf{u}}_{H^1}^2
\leq 
C\left(
\norm{\mathbf{u}_t}_{H^1}
+\norm{\nabla \varrho}_{L^2}
\right)\norm{\mathbf{u}}_{H^1}+C\norm{\mathbf{u}}_{H^1}^2
\]
Hence, we further get from 
Lemma \ref{lemma-u-l2-linear}-Lemma \ref{lemma-nablau_t-l3-linear-rho}
that 
\[
\norm{\mathbf{u}}_{H^2}\leq C_0\norm{\mathbf{u}}_{H^1}^2.
\]
This, together with \eqref{theorem-2-conc-1-linear}, 
implies \eqref{theorem-2-conc-3-linear}.
\end{proof}

\section{Appendix}

\begin{appendix}

\subsubsection{Strong solution and nonlinear instability }

We first recall the well-posedness of the system \eqref{main-2}. The following lemma, which is adapted from \cite{hkim1987,Choe2003}, gives the existence and uniqueness of solutions under appropriate initial data.
\begin{theorem}\label{shidingxingwenti0202} 
 Let $\Omega\subset \R^2$ be a bounded domain with smooth boundary.
If the initial data $\left(\mathbf{u},\rho\right)|_{t=0}=\left(\mathbf{u}_0,\rho_0\right)\in \left[H^2\left(\Omega\right)\right]^2\cap \left(H^{1}\left(\Omega\right)\cap L^{\infty}\left(\Omega\right)\right)$, where $\mathbf{u}_0,|_{\partial\Omega}=\mathbf{0}$, $\nabla\cdot\mathbf{u}_0=0$ and $\rho\left(0\right)>\sigma>0$ ($\sigma$ is an arbitrarily given positive constant), then there exists a positive constant $T^{*}$ such that 
the system \eqref{main-2} has a unique strong solution $\left(\mathbf{u},\rho\right)$ satisfying 
\begin{align}\label{xiekaidian0202}
    \begin{aligned}
      &\sqrt{\rho}\partial_{t}\mathbf{u}\in L^{\infty}\left((0,T);L^2
      \left(\Omega\right)\right);~
      \partial_{t}\mathbf{u}\in L^{2}\left((0,T);H^{1}\left(\Omega\right)\right);
      \\
      &\mathbf{u}\in L^{\infty}\left((0,T);H^{2}\left(\Omega\right)\right)\cap 
      L^{2}\left((0,T);W^{2,4}\left(\Omega\right)\right);
      \\
      &\nabla p\in L^{\infty}\left((0,T);L^{2}\left(\Omega\right)\right)
      \cap L^{2}\left((0,T);L^{4}\left(\Omega\right)\right),
      \\
      &\rho\in L^{\infty}\left((0,T);H^1\left(\Omega\right)\right),~
    \partial_{t}\rho\in L^{\infty}\left((0,T);L^{2}\left(\Omega\right)\right),
    \end{aligned}
  \end{align}
  where $0<T<T^{*}$.
\end{theorem}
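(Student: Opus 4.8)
The plan is to establish local well-posedness by the standard linearization-and-iteration argument of \cite{Choe2003,hkim1987}, exploiting the strict lower bound $\rho_0>\sigma>0$, which renders the momentum equation uniformly parabolic and removes the vacuum-induced degeneracy that forces a compatibility condition in the general Choe--Kim theory. First I would construct an approximating sequence $(\mathbf{u}^{k},\rho^{k},P^{k})$: starting from $\mathbf{u}^{0}\equiv\mathbf{u}_0$, solve the linear transport equation $\partial_t\rho^{k+1}+(\mathbf{u}^{k}\cdot\nabla)\rho^{k+1}=0$ with $\rho^{k+1}|_{t=0}=\rho_0$, and then solve the \emph{linear} momentum--continuity system for $(\mathbf{u}^{k+1},P^{k+1})$ in which $\rho^{k+1}$ appears as a known variable coefficient and $\mathbf{u}^{k}$ as the known advection field.

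Second, I would derive uniform-in-$k$ a priori bounds on a short interval $[0,T^{*}]$. For the density, since the advecting field is divergence-free, the maximum principle along characteristics preserves $\sigma\le\rho^{k+1}\le\norm{\rho_0}_{L^{\infty}}$ pointwise, while the differentiated transport equation yields $\frac{d}{dt}\norm{\nabla\rho^{k+1}}_{L^2}\le\norm{\nabla\mathbf{u}^{k}}_{L^{\infty}}\norm{\nabla\rho^{k+1}}_{L^2}$, so Gronwall propagates the $H^1$-bound as long as $\int_0^{t}\norm{\nabla\mathbf{u}^{k}}_{L^{\infty}}\,ds$ is controlled. For the velocity, testing the momentum equation by $\mathbf{u}^{k+1}$ controls $\norm{\nabla\mathbf{u}^{k+1}}_{L^2}$ and $\norm{\sqrt{\rho^{k+1}}\mathbf{u}^{k+1}}_{L^2}$; differentiating in time and testing by $\mathbf{u}^{k+1}_t$ controls $\norm{\sqrt{\rho^{k+1}}\mathbf{u}^{k+1}_t}_{L^{\infty}_tL^2}$ and $\norm{\nabla\mathbf{u}^{k+1}_t}_{L^2_tL^2}$, where the positivity $\rho^{k+1}\ge\sigma$ is essential so that $\norm{\mathbf{u}^{k+1}_t}_{L^2}\le\sigma^{-1/2}\norm{\sqrt{\rho^{k+1}}\mathbf{u}^{k+1}_t}_{L^2}$. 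The datum $\mathbf{u}_t(0)=\rho_0^{-1}(\nu\Delta\mathbf{u}_0-\nabla P_0-\rho_0\nabla f)-(\mathbf{u}_0\cdot\nabla)\mathbf{u}_0$ lies in $L^2$ because $\mathbf{u}_0\in H^2$ and $\rho_0\ge\sigma$, so no compatibility hypothesis is needed. Finally, viewing the momentum equation as a stationary Stokes system $-\nu\Delta\mathbf{u}^{k+1}+\nabla P^{k+1}=\mathbf{F}^{k+1}$ with $\mathbf{F}^{k+1}=-\rho^{k+1}(\mathbf{u}^{k+1}_t+(\mathbf{u}^{k}\cdot\nabla)\mathbf{u}^{k+1})-\rho^{k+1}\nabla f$, Lemma \ref{lem:stokes} upgrades the $L^2$-control of $\mathbf{F}^{k+1}$ to $\mathbf{u}^{k+1}\in L^{\infty}_tH^2$ and $\nabla P^{k+1}\in L^{\infty}_tL^2$, and the $L^2_tL^4$-control (via $\mathbf{u}^{k+1}_t\in L^2_tH^1\hookrightarrow L^2_tL^4$) gives $\mathbf{u}^{k+1}\in L^2_tW^{2,4}$ and $\nabla P^{k+1}\in L^2_tL^4$, precisely the regularity in \eqref{xiekaidian0202}. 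Choosing $T^{*}$ small lets the nonlinear terms (each carrying a factor of $T^{*}$) be absorbed, so the iterates stay in a fixed ball.

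Third, for convergence I would estimate the differences $\delta\mathbf{u}^{k}=\mathbf{u}^{k+1}-\mathbf{u}^{k}$, $\delta\rho^{k}=\rho^{k+1}-\rho^{k}$ in the low norm $C([0,T^{*}];L^2)$ and show the scheme is contractive (possibly after shrinking $T^{*}$), whence $(\mathbf{u}^{k},\rho^{k})$ converges strongly in $C([0,T^{*}];L^2)$. The uniform high-order bounds then yield, by weak-$*$ compactness and interpolation, a limit $(\mathbf{u},\rho,P)$ enjoying all of \eqref{xiekaidian0202}, and one passes to the limit in the weak formulation of \eqref{main-2}. Uniqueness follows by applying the same difference estimate to two putative solutions and invoking Gronwall's inequality.

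The hard part will be closing the nonlinear loop between the transport and momentum equations at the top regularity level: propagating $\norm{\nabla\rho}_{L^2}$ requires $\nabla\mathbf{u}\in L^1_tL^{\infty}$, which in two dimensions is \emph{not} delivered by $H^2$ alone but needs the $W^{2,4}$-estimate together with the logarithmic interpolation inequality \eqref{ln-nablau}, whereas that velocity estimate in turn relies on the density bounds. Consequently the a priori estimates for $(\mathbf{u}^{k+1},\rho^{k+1})$ must be run simultaneously and shown to close on a time interval whose length depends only on $\sigma$, $\norm{\rho_0}_{H^1\cap L^{\infty}}$, and $\norm{\mathbf{u}_0}_{H^2}$.
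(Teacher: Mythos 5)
The paper does not prove this theorem itself: it is imported verbatim from \cite{hkim1987,Choe2003}, and the proof in those references is exactly the linearization--iteration scheme you describe (transport step, linear momentum step with variable coefficients, uniform energy and Stokes estimates, low-norm contraction, Gronwall for uniqueness). Your proposal is therefore essentially the same approach and is sound, including the two genuinely important observations: that $\rho_0\geq\sigma>0$ makes $\mathbf{u}_t(0)\in L^2(\Omega)$ automatic so the vacuum compatibility condition of \cite{Choe2003} is not needed, and that propagating $\norm{\nabla\rho}_{L^2}$ requires $\nabla\mathbf{u}\in L^1\left((0,T);L^\infty(\Omega)\right)$, which in two dimensions must come from the $L^2\left((0,T);W^{2,4}(\Omega)\right)$ Stokes estimate rather than from $H^2$ alone, so the velocity and density estimates must be closed simultaneously on a short time interval.
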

The proof of \autoref{shidingxingwenti0202} involves advanced techniques from the theory of partial differential equations, such as energy estimates, fixed-point theorems, and the properties of Sobolev spaces. One can refer to the references \cite{hkim1987,Choe2003}.

\begin{theorem}\label{xianxingbuwending1221}[{\bf Linear instability}\rm{\cite{Li2025}}] If there exists a point \(\left(x_{0},y_{0}\right)\in \Omega\) such that \(\delta(\mathbf{x}_0)>0\), 
then there exists a smooth initial data \(\left(\mathbf{u},\varrho\right)|_{t=0}=\left(\mathbf{u}_{0},\varrho_{0}\right)\) and \(\Lambda>0\) such that
\(\left(\mathbf{u},\rho\right)=e^{\Lambda t}\left(\mathbf{u}_{0},\theta_{0}\right)\) is the solution of the linearized system
\[
\begin{cases}
\rho_s\frac{\partial \mathbf{u}}{\partial t}= \nu \Delta \mathbf{u}- \nabla P-\varrho\nabla f,\quad \mathbf{x}\in \Omega,
 \\ \frac{\partial \varrho}{\partial t} +\delta(\mathbf{x}) (\mathbf{u}\cdot  \nabla)\nabla f =0,\quad \mathbf{x}\in \Omega,
\\ \nabla \cdot  \mathbf{u}=0,\quad \mathbf{x}\in \Omega,\\
\mathbf{u}|_{\partial\Omega}=0.
\end{cases}
\]
where \(\left(\mathbf{u}_{0},\varrho_{0}\right)\) satisfies the following identity
\begin{align*}
\Lambda^2\int_{\Omega}\varrho_{0}\mathbf{u}_{0}^2dxdy=
-\Lambda\mu\left\|\nabla\mathbf{u}_{0}\right\|_{L^2\left(\Omega\right)}^2+\int_{\Omega}\delta(\mathbf{x})\left|\mathbf{u}_{0}\cdot\nabla f\right|^2dxdy,~\varrho_{0}=-\frac{\mathbf{u}_{0}\cdot\nabla\varrho_{0}}{\Lambda}.
\end{align*}
\end{theorem}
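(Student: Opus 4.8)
The plan is to produce an unstable growing mode by a normal-mode reduction followed by a modified variational principle of Guo--Tice type. First I substitute the ansatz $(\mathbf{u},\varrho,P)=e^{\Lambda t}(\mathbf{u}_0,\varrho_0,P_0)$ with $\Lambda>0$, which turns the linearized evolution problem into the eigenvalue system
\[
\Lambda\rho_s\mathbf{u}_0=\nu\Delta\mathbf{u}_0-\nabla P_0-\varrho_0\nabla f,\qquad \Lambda\varrho_0=-\delta(\mathbf{x})\,\mathbf{u}_0\cdot\nabla f,\qquad \nabla\cdot\mathbf{u}_0=0,
\]
together with $\mathbf{u}_0|_{\partial\Omega}=0$. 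Since $\Lambda>0$, I eliminate the density through $\varrho_0=-\delta(\mathbf{x})(\mathbf{u}_0\cdot\nabla f)/\Lambda$, insert it into the momentum balance, test against $\Lambda\mathbf{u}_0$, and integrate over $\Omega$; incompressibility and the no-slip condition annihilate $\int_\Omega\nabla P_0\cdot\mathbf{u}_0$, and I arrive at the balance relation asserted in the theorem (with $\mu=\nu$ and $\rho_s$ the coefficient on the left),
\[
\Lambda^2\int_\Omega\rho_s|\mathbf{u}_0|^2\,d\mathbf{x}=-\Lambda\nu\int_\Omega|\nabla\mathbf{u}_0|^2\,d\mathbf{x}+\int_\Omega\delta(\mathbf{x})\,|\mathbf{u}_0\cdot\nabla f|^2\,d\mathbf{x}.
\]

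Second, I realize $\Lambda$ as a genuine eigenvalue through a family of constrained maximizations. For each parameter $s>0$ I set
\[
\Phi(s)=\sup\Big\{\int_\Omega\delta(\mathbf{x})(w\cdot\nabla f)^2\,d\mathbf{x}-s\nu\int_\Omega|\nabla w|^2\,d\mathbf{x}:\ w\in H^1_0(\Omega),\ \nabla\cdot w=0,\ \int_\Omega\rho_s|w|^2\,d\mathbf{x}=1\Big\}.
\]
The density bounds $0<\alpha_1\le\rho_s\le\alpha_2$, the boundedness of the smooth coefficients $\delta,\nabla f$, and the compact embedding $H^1_0(\Omega)\hookrightarrow L^q(\Omega)$ in two dimensions make the destabilizing term weakly continuous and the Dirichlet energy weakly lower semicontinuous, so the direct method yields a maximizer $w_s$ for every $s$. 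As a supremum of affine functions of $s$, the map $\Phi$ is continuous and nonincreasing; Poincaré's inequality keeps $\int_\Omega|\nabla w|^2$ above a positive constant on the constraint set, so $\Phi(s)\to-\infty$ as $s\to\infty$, while $\Phi(0^+)=\sup_w\int_\Omega\delta(w\cdot\nabla f)^2>0$. Comparing $\Phi(s)$ with $s^2$ and invoking the intermediate value theorem produces a root $\Lambda>0$ of $\Phi(\Lambda)=\Lambda^2$; its maximizer $\mathbf{u}_0:=w_\Lambda$, normalized by $\int_\Omega\rho_s|\mathbf{u}_0|^2=1$, then satisfies exactly the displayed quadratic identity.

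Third, I would read off the growing mode from the Euler--Lagrange equation of the maximization at $s=\Lambda$: the maximizer solves the Stokes-type system $-\Lambda\nu\Delta\mathbf{u}_0+\Lambda^2\rho_s\mathbf{u}_0-\delta(\mathbf{u}_0\cdot\nabla f)\nabla f+\nabla\pi=0$ with $\nabla\cdot\mathbf{u}_0=0$, which after dividing by $\Lambda$ is precisely the eigenvalue momentum equation with pressure $P_0=\pi/\Lambda$ and $\varrho_0=-\delta(\mathbf{u}_0\cdot\nabla f)/\Lambda$. Since $\delta$, $f$ and $\rho_s$ are smooth and $\partial\Omega$ is smooth, elliptic bootstrapping for this Stokes system upgrades $\mathbf{u}_0$ to $C^\infty$, whence $\varrho_0$ is smooth as well, yielding the smooth initial datum claimed in the theorem.

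The main obstacle is the self-consistent step of locating $\Lambda$: one must simultaneously control the existence of a maximizer for each frozen $s$ and the qualitative behavior of $s\mapsto\Phi(s)$ so that the graphs of $\Phi(s)$ and $s^2$ cross at a strictly positive value. The decisive positivity input is the hypothesis $\delta(\mathbf{x}_0)>0$: by continuity $\delta$ stays bounded below by a positive constant on a neighborhood of $\mathbf{x}_0$ on which $\nabla f\ne0$, so a divergence-free test field $w=\nabla^\perp\psi$ supported there, chosen so that $\nabla^\perp\psi\cdot\nabla f\not\equiv0$, makes $\int_\Omega\delta(w\cdot\nabla f)^2>0$ and secures $\Phi(0^+)>0$. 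Arranging this nonorthogonality against $\nabla f$ while respecting both incompressibility and the homogeneous boundary condition is the delicate point of the construction.
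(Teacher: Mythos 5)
This theorem is stated in the appendix as a result imported from \cite{Li2025} and the paper supplies no proof of it, so there is no in-paper argument to compare against; judged on its own terms, your reconstruction is correct and is exactly the standard route for this kind of viscous Rayleigh--Taylor statement: normal-mode reduction, the Guo--Tice modified variational principle with the frozen-coefficient family $\Phi(s)$, the fixed-point equation $\Phi(\Lambda)=\Lambda^2$ obtained from monotonicity and the sign change between $\Phi(0^+)>0$ and $\Phi(s)\to-\infty$, and then the Euler--Lagrange/Stokes bootstrap to recover the eigenvalue system and smoothness. You also, rightly, silently repaired the statement's typos ($\mu=\nu$, $\rho_s$ rather than $\varrho_0$ in the quadratic form, the density relation $\varrho_0=-\delta(\mathbf{u}_0\cdot\nabla f)/\Lambda$, and the transport term $\delta\,\mathbf{u}\cdot\nabla f$ in place of $\delta(\mathbf{u}\cdot\nabla)\nabla f$). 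The only point worth making explicit is that your positivity step $\Phi(0^+)>0$ tacitly requires $\nabla f\not\equiv 0$ on a neighborhood of $\mathbf{x}_0$ where $\delta>0$; this is implicit in the setting (where $\delta$ arises from $\nabla\rho_s=\delta\nabla f$ and $f$ is a nontrivial potential), but it is an assumption beyond the literal hypothesis $\delta(\mathbf{x}_0)>0$ and should be stated.
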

\begin{theorem}\label{hadamardyiyixia0202}[{\bf Nonlinear instability} \rm{\cite{Li2025}}]
If there exists a point \(\left(x_{0},y_{0}\right)\in \Omega\) such that \(\delta(\mathbf{x}_0)>0\), the steady-state solution \(\left(\mathbf{0},\rho_{s}\right)\) is unstable in Hadamard sense. That is, there exist two constants  \(\epsilon\) and \(\delta_{0}\), and functions \(\left(\mathbf{u}_{0},\varrho_{0}\right)\)\(\in \left[H^{2}\left(\Omega\right)\right]^2\times \left[H^{1}\left(\Omega\right)\cap L^{\infty}\left(\Omega\right)\right]\), such that for any \(\delta^{*}\in\left(0,\varrho_{0}\right)\) and initial data \(\left(\mathbf{u}_{0}^{\delta^{*}},\varrho_{0}^{\delta^{*}}\right):=\delta^{*}\left(\mathbf{u}_{0},\varrho_{0}\right)\), the strong solution \(\left(\mathbf{u}^{\delta^{*}},\varrho^{\delta^{*}}\right)\in C\left(0,T_{\text{max}},\left[H^{1}\left(\Omega\right)\right]^2\times L^{2}\left(\Omega\right)\right)\) of the problem \eqref{main-peturbation}- \eqref{main-perturbation-2} subject to the initial data \(\left(\mathbf{u}_{0}^{\delta^{*}},\varrho_{0}^{\delta^{*}}\right)\) satisfies 
\[
\left\|\varrho^{\delta^{*}}\left(T^{\delta^{*}}\right)\right\|_{L^{1}\left(\Omega\right)}\geq \epsilon,~
\left\|\mathbf{u}^{\delta^{*}}\left(T^{\delta^{*}}\right)\right\|_{L^1\left(\Omega\right)}\geq \epsilon,
\]
for some escape time \(0<T^{\delta^{*}}<T_{max}\), where \(T_{max}\) is the maximal existence time of \(\left(\mathbf{u}^{\delta^{*}},
\varrho^{\delta^{*}}\right)\).
\end{theorem}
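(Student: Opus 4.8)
The plan is to derive the Hadamard (nonlinear) instability of the hydrostatic equilibrium $(\mathbf{0},\rho_s)$ from the linear instability already established in \autoref{xianxingbuwending1221}, by means of a bootstrap, or ``escape-time,'' argument of the Guo--Strauss type. The linearized problem supplies a growing normal mode $e^{\Lambda t}(\mathbf{u}_0,\varrho_0)$ with maximal growth rate $\Lambda>0$, and the strategy is to show that for rescaled initial data $\delta^*(\mathbf{u}_0,\varrho_0)$ of amplitude $\delta^*\ll 1$, the genuine nonlinear solution tracks this exponentially growing linear profile until it reaches a fixed threshold of order $\epsilon$ that is independent of $\delta^*$. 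Since the linear part grows like $\delta^*e^{\Lambda t}$ while the nonlinear self-interaction is quadratic, the deviation from the linear flow stays strictly higher order up to the escape time $T^{\delta^*}\sim \Lambda^{-1}\ln(\epsilon/\delta^*)$, at which instant the solution has left every fixed ball and its $L^1$ norms in both components are bounded below by $\epsilon$.

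First I would fix the largest growth rate $\Lambda$ and its eigenfunction from \autoref{xianxingbuwending1221}, and record the linear estimate $\|(\mathbf{u}^{\mathrm{lin}},\varrho^{\mathrm{lin}})(t)\|_{X}\le C\,\delta^*e^{\Lambda t}$ in the space $X=[H^2]^2\times(H^1\cap L^\infty)$ dictated by the local theory in \autoref{shidingxingwenti0202}. Next I would invoke that local theory to obtain, for each small $\delta^*$, a unique strong solution $(\mathbf{u}^{\delta^*},\varrho^{\delta^*})$ on its maximal interval $[0,T_{\max})$. The core step is a Duhamel comparison: writing the nonlinear solution as the linear profile plus a remainder, one controls the remainder through the quadratic terms $(\varrho+\rho_s)(\mathbf{u}\cdot\nabla)\mathbf{u}$ and $\mathbf{u}\cdot\nabla\varrho$, producing a differential inequality of the schematic form $\frac{d}{dt}\|\mathrm{rem}\|\le \Lambda\|\mathrm{rem}\|+C\|(\mathbf{u},\varrho)\|^2$ that keeps the remainder of size $(\delta^*e^{\Lambda t})^2$ so long as the full solution is itself $O(\delta^*e^{\Lambda t})$. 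Defining $T^{\delta^*}$ as the first instant the solution attains a fixed size $\epsilon_0$, a continuity argument then shows $T^{\delta^*}<T_{\max}$ and that at $T^{\delta^*}$ the growing mode dominates the remainder, whence the stated $L^1$ lower bounds follow because the normalized eigenfunction carries nonzero $L^1$ mass in both the velocity and density slots.

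The main obstacle is the absence of smoothing in the density equation $\partial_t\varrho+\mathbf{u}\cdot\nabla\varrho+\mathbf{u}\cdot\nabla\rho_s=0$: being a pure transport law it offers no parabolic gain for $\varrho$, so the remainder estimate cannot be closed by treating the system as a perturbed semilinear parabolic equation. The remedy is to split the analysis, controlling $\mathbf{u}$ through the viscous Stokes structure (as in \autoref{lemma-nablau_t-l3}) while controlling $\varrho$ by transport estimates along the flow map \eqref{particle-path}, using the pointwise conservation \eqref{density-bounds} together with the Gronwall-type $L^p$ bounds of \autoref{lemma-rho-2p-1}--\autoref{lemma-rho-2p-2} to avoid any loss of derivatives. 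One must further verify that the spectral gap is genuine, i.e.\ that $\Lambda$ is the supremum of the real parts of the linearized spectrum, so that no faster linear mechanism competes with the constructed mode; here the variational characterization of $\Lambda$ underlying \autoref{xianxingbuwending1221} is essential. Once the transport bounds decouple cleanly from the viscous bounds and the gap is secured, the bootstrap closes and the escape-time estimate yields the asserted Hadamard instability.
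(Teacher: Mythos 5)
There is nothing in the paper to compare your argument against: Theorem \ref{hadamardyiyixia0202} is stated as a quoted result, attributed to \cite{Li2025}, and the paper supplies no proof of it (nor of the companion linear statement, \autoref{xianxingbuwending1221}). Judged on its own merits, your sketch is the standard bootstrap (Guo--Strauss type) route used throughout the Rayleigh--Taylor literature \cite{Hwang2003, Jiang2013, Jiang2014} and, in substance, in \cite{Li2025} itself: grow a normal mode with the variational rate $\Lambda$, run the local strong-solution theory of \autoref{shidingxingwenti0202}, control the deviation from the linear profile by quadratic estimates, and escape at $T^{\delta^*}\sim \Lambda^{-1}\ln(\epsilon/\delta^*)$. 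Your identification of the transport structure of the density equation as the obstruction, and the split into Stokes-type estimates for $\mathbf{u}$ and flow-map estimates for $\varrho$, is also the right instinct.

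One caution on the step you phrase as verifying a ``spectral gap.'' For this quasilinear system the linearized operator is neither self-adjoint nor sectorial (the density enters by pure transport), so locating the spectrum and invoking a semigroup gap in the style of abstract bootstrap lemmas is delicate and is generally \emph{not} how these papers proceed; instead one proves the sharp growth bound $\norm{(\mathbf{u},\varrho)(t)}\leq C\,\delta^* e^{\Lambda t}$ for \emph{arbitrary} solutions of the linearized problem directly by an energy method, exploiting precisely the variational identity for $\Lambda$ recorded in \autoref{xianxingbuwending1221} (this is the route of \cite{Jiang2014} and \cite{Li2025}). Relatedly, the final $L^1$ lower bounds on \emph{both} components at the escape time do not follow merely because the eigenfunction has nonzero mass in both slots: one needs the sharp upper bound just mentioned to force the remainder to be strictly smaller than the linear part at $t=T^{\delta^*}$, together with a lower growth bound for the mode in each component separately. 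Your sketch names these steps but does not execute them; with the energy-method substitution for the spectral-gap step, it is a faithful outline of the known proof.
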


\subsubsection{Some useful inequalities}
We also need some essential inequalities. These inequalities will be used to estimate the norms of the solutions and their derivatives, which are crucial for the stability and instability results.
\begin{lemma}[\cite{Temam1977}]\label{lem:stokes}
Let $\Omega$ be any open bounded domain in $\mathbb{R}^2$ with smooth boundary $\partial\Omega$. Consider the following Stokes problem
\[
\begin{cases}
-\nu \Delta \mathbf{u} + \nabla P = g, & \text{in } \Omega, \\
\nabla \cdot \mathbf{u} = 0, & \text{in } \Omega, \\
U = 0, & \text{on } \partial\Omega.
\end{cases}
\]
If $g \in W^{m,p}(\Omega)$, then $\mathbf{u} \in W^{m+2,p}(\Omega)$, $P \in W^{m+1,p}(\Omega)$ and there exists a constant $D_0 = D_0(p, \nu, m, \Omega)$ such that
\[
\|\mathbf{u}\|_{W^{m+2,p}} + \|P\|_{W^{m+1,p}} \leq D_0 \|g\|_{W^{m,p}},
\]
for any $p \in (1, \infty)$ and integer $m \geq -1$.
\end{lemma}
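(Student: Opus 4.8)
The plan is to establish this classical $L^p$ regularity estimate for the stationary Stokes system by combining an a priori bound derived from Agmon--Douglis--Nirenberg (ADN) elliptic theory with a continuity argument for existence. The conceptual starting point is that, once the pressure is eliminated via the divergence constraint, the Stokes system is elliptic in the ADN sense, and the no-slip condition $\mathbf{u}|_{\partial\Omega}=\mathbf{0}$ together with $\nabla\cdot\mathbf{u}=0$ satisfies the complementing (Shapiro--Lopatinskii) condition; this is precisely what licenses the full elliptic gain of two derivatives for $\mathbf{u}$ and one for $P$ on the scale $g\in W^{m,p}\Rightarrow \mathbf{u}\in W^{m+2,p},\ P\in W^{m+1,p}$, for every $1<p<\infty$ and $m\ge-1$.

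First I would dispose of the two model problems. On the whole plane, taking the Fourier transform of $-\nu\Delta\mathbf{u}+\nabla P=g$ and $\nabla\cdot\mathbf{u}=0$ gives $\widehat P=-i\xi\cdot\widehat g/|\xi|^2$ and $\nu|\xi|^2\widehat{\mathbf{u}}=\widehat g-i\xi\widehat P$, so that $\nabla^2\mathbf{u}$ and $\nabla P$ are realized as Fourier multiplier operators acting on $g$ with symbols that are homogeneous of degree $0$ and smooth away from the origin. The Mikhlin--H\"ormander multiplier theorem (equivalently, Calder\'on--Zygmund theory for the associated singular kernels) then yields $\|\nabla^2\mathbf{u}\|_{L^p}+\|\nabla P\|_{L^p}\le C\|g\|_{L^p}$, and differentiating the multiplier relations produces the higher $m\ge0$ versions. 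For the half-plane $\mathbb{R}^2_+$ under the Dirichlet condition, one obtains the analogous estimate up to the flat boundary, either by an explicit reflection/Green's-tensor construction or by verifying the complementing condition and invoking the ADN boundary estimate.

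Next I would globalize by the standard localization. Choosing a finite partition of unity $\{\chi_k\}$ subordinate to a cover of $\overline\Omega$ by interior balls and boundary charts, and flattening each boundary piece by a smooth diffeomorphism, I apply the model estimates to $(\chi_k\mathbf{u},\chi_k P)$, which solves a Stokes system with right-hand side $\chi_k g$ plus commutators $[\nu\Delta,\chi_k]\mathbf{u}$ and $P\nabla\chi_k$. The crucial subtlety is that $\chi_k\mathbf{u}$ is no longer divergence free, since $\nabla\cdot(\chi_k\mathbf{u})=\mathbf{u}\cdot\nabla\chi_k$; I would correct this by subtracting a Bogovski\u\i-type lift $\mathbf{b}_k$ with $\nabla\cdot\mathbf{b}_k=\mathbf{u}\cdot\nabla\chi_k$, controlled in $W^{m+2,p}$ by $\mathbf{u}\cdot\nabla\chi_k$ in $W^{m+1,p}$. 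Summing the local estimates gives
\[
\|\mathbf{u}\|_{W^{m+2,p}}+\|P\|_{W^{m+1,p}}\le D_0\big(\|g\|_{W^{m,p}}+\|\mathbf{u}\|_{W^{m+1,p}}+\|P\|_{W^{m,p}}\big),
\]
and the lower-order terms are absorbed using the compact embedding $W^{m+2,p}\hookrightarrow W^{m+1,p}$ together with an Ehrling/interpolation argument; the pressure, entering only through $\nabla P$, is normalized by its mean and controlled by a Poincar\'e-type inequality. Existence in the asserted spaces then follows by the method of continuity, linearly deforming the operator to one whose invertibility is already known (e.g.\ via the Leray projection) while the just-proved a priori bound keeps the solution operator uniformly bounded along the deformation; uniqueness is immediate from the energy identity.

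The step I expect to be the main obstacle is the boundary analysis: verifying the complementing condition for the Stokes system under the no-slip boundary condition and, within the localization, correctly restoring incompressibility after multiplication by a cutoff through a Bogovski\u\i\ operator whose mapping properties match the full Sobolev scale. Ensuring that the commutator and divergence-correction terms are genuinely lower order--so that the absorption step is legitimate uniformly in $m\ge-1$ and $1<p<\infty$--is the delicate point, whereas the interior estimate and the whole-space multiplier computation are comparatively routine.
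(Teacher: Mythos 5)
The paper gives no proof of this lemma: it is quoted verbatim as a classical result and attributed to the cited reference \cite{Temam1977} (the underlying theorem is Cattabriga's $L^p$ regularity for the Stokes system, proved there exactly along the lines you describe — Agmon--Douglis--Nirenberg ellipticity with the complementing condition for the no-slip problem, interior and flat-boundary model estimates, localization, and absorption of lower-order terms). So your outline is not a different route; it is essentially a reconstruction of the standard proof behind the citation, and it is correct in its main architecture, including the genuinely delicate points you flag (the Shapiro--Lopatinskii verification and the Bogovski\u{\i} correction of $\nabla\cdot(\chi_k\mathbf{u})=\mathbf{u}\cdot\nabla\chi_k$ after cutting off).

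Two caveats are worth recording, since you only gesture at them. First, the endpoint $m=-1$ does not follow from the ADN machinery as you set it up: for $g\in W^{-1,p}$ the localized system has a distributional right-hand side to which the boundary estimates do not directly apply, and the standard treatment (as in Amrouche--Girault or Galdi) handles $m=-1$ by transposition/duality from the case $m=0$ with $p$ replaced by $p'$, plus the variational theory at $p=2$; your whole-space multiplier computation survives in negative-order spaces, but the localization step does not verbatim. Second, the stated estimate on $\|P\|_{W^{m+1,p}}$ can only hold under a normalization of the pressure (e.g.\ $\int_\Omega P\,d\mathbf{x}=0$), which the lemma leaves implicit and which you correctly impose; relatedly, your closing ``uniqueness is immediate from the energy identity'' is only immediate for $p\ge 2$, and for $1<p<2$ uniqueness in the low-regularity class again requires a duality argument rather than testing with $\mathbf{u}$ itself. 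Neither caveat changes the verdict: your plan is the proof that the cited source supplies.
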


\begin{lemma}[\cite{Adams}]\label{lem:sobolev}
Let $\Omega \subset \mathbb{R}^2$ be any bounded domain with $C^1$ smooth boundary. We then have the following embeddings and inequalities:
\begin{subequations}
\begin{align}\label{lem:sobolev-1}
&H^1(\Omega) \hookrightarrow L^p(\Omega), \quad \forall \, 1 \leq p < \infty;\\
\label{lem:sobolev-2}
&W^{1,p}(\Omega) \hookrightarrow L^\infty(\Omega), \quad \forall \, 2 < p < \infty;\\
\label{lem:sobolev-3}
&\|u\|_{L^4}^2 \leq 2 \|u\| \|\nabla u\|, \quad \forall \, u: \Omega \to \mathbb{R} \text{ and } f \in H_0^1(\Omega);\\
\label{lem:sobolev-4}
&\|u\|_{L^4}^2 \leq C \left( \|u\| \|\nabla u\| + \|u\|^2 \right), \quad \forall \, u: \Omega \to \mathbb{R} \text{ and } u \in H^1(\Omega).
\end{align}
\end{subequations}
\end{lemma}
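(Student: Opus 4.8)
The plan is to establish the four assertions separately, reducing each to classical embedding theory on $\mathbb{R}^2$ together with an explicit one-dimensional slicing argument for the Ladyzhenskaya-type bounds. Since $\partial\Omega$ is $C^1$, I would fix once and for all a (Stein-type) bounded extension operator $E\colon W^{k,p}(\Omega)\to W^{k,p}(\mathbb{R}^2)$ satisfying $\|Eu\|_{W^{k,p}(\mathbb{R}^2)}\le C_\Omega\|u\|_{W^{k,p}(\Omega)}$, so that it suffices to prove the corresponding statements on the whole plane and then restrict. For the critical embedding \eqref{lem:sobolev-1}, it is enough to show $\|v\|_{L^p(\mathbb{R}^2)}\le C(p)\|v\|_{H^1(\mathbb{R}^2)}$ for each fixed $2\le p<\infty$; I would obtain this from the borderline Gagliardo--Nirenberg inequality $\|v\|_{L^p}\le C\|v\|_{L^2}^{2/p}\|\nabla v\|_{L^2}^{1-2/p}$ (equivalently $\|v\|_{L^p}\le C\sqrt{p}\,\|v\|_{H^1}$), applied to $v=Eu$. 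The constant is finite for every finite $p$, which is all that is used; its degeneration as $p\to\infty$ reflects the genuine failure $H^1\not\hookrightarrow L^\infty$ in two dimensions.

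For \eqref{lem:sobolev-2} I am in the subcritical Morrey range $p>2=n$: after extension, Morrey's inequality gives $W^{1,p}(\mathbb{R}^2)\hookrightarrow C^{0,1-2/p}(\mathbb{R}^2)\hookrightarrow L^\infty$, whence $\|u\|_{L^\infty(\Omega)}\le\|Eu\|_{L^\infty(\mathbb{R}^2)}\le C\|Eu\|_{W^{1,p}(\mathbb{R}^2)}\le C\|u\|_{W^{1,p}(\Omega)}$. The computational heart is \eqref{lem:sobolev-3}. For $u\in H^1_0(\Omega)$ I extend by zero to $H^1(\mathbb{R}^2)$ and integrate in each coordinate: from $u^2(x_1,x_2)=\int_{-\infty}^{x_1}2u\,\partial_1u\,ds\le 2\int_{\mathbb{R}}|u\,\partial_1u|\,ds=:f(x_2)$ and symmetrically $u^2\le g(x_1):=2\int_{\mathbb{R}}|u\,\partial_2u|\,ds$, I bound $\int u^4=\int u^2\cdot u^2\le\int f(x_2)g(x_1)\,dx_1\,dx_2=\bigl(\int f\,dx_2\bigr)\bigl(\int g\,dx_1\bigr)$. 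Cauchy--Schwarz gives $\int f\,dx_2\le 2\|u\|\,\|\partial_1u\|$ and $\int g\,dx_1\le 2\|u\|\,\|\partial_2u\|$, and Young's inequality $\|\partial_1u\|\,\|\partial_2u\|\le\tfrac12\|\nabla u\|^2$ then yields $\|u\|_{L^4}^4\le 2\|u\|^2\|\nabla u\|^2$, which is \eqref{lem:sobolev-3} with constant at most $2$.

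For \eqref{lem:sobolev-4} I would drop the zero-trace hypothesis by combining step three with the extension operator chosen so that $\|Eu\|_{L^2(\mathbb{R}^2)}\le C\|u\|_{L^2(\Omega)}$ and $\|\nabla Eu\|_{L^2(\mathbb{R}^2)}\le C\|u\|_{H^1(\Omega)}$. Applying the $\mathbb{R}^2$ version of \eqref{lem:sobolev-3} to $Eu$ gives the Gagliardo--Nirenberg form $\|u\|_{L^4(\Omega)}^2\le C\|u\|_{L^2}\,\|u\|_{H^1}$; squaring and using $(\|u\|^2+\|\nabla u\|^2)^{1/2}\le\|u\|+\|\nabla u\|$ produces $\|u\|_{L^4}^2\le C\|u\|(\|u\|+\|\nabla u\|)=C(\|u\|\,\|\nabla u\|+\|u\|^2)$, as claimed.

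The hard part will be step one: the naive Sobolev target exponent $np/(n-p)$ blows up precisely at the borderline $p=n=2$, so the standard $L^{2^\ast}$ embedding is unavailable and one is forced to invoke the logarithmic/critical Gagliardo--Nirenberg estimate, whose constant necessarily grows with $p$. The remaining three items are routine once the extension operator is fixed, and the only genuine computation is the coordinate-slicing argument for \eqref{lem:sobolev-3}.
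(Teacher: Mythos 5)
Your proposal is correct, but there is nothing in the paper to compare it against: the authors state this lemma as a quoted standard result with a citation to Adams--Fournier and give no proof at all. Your argument is the standard textbook derivation and is sound: the extension operator reduces everything to $\mathbb{R}^2$ (a bounded $C^1$ boundary is locally the graph of a $C^1$ function, hence Lipschitz, so the Stein or Calder\'on extension applies); the borderline Gagliardo--Nirenberg bound $\norm{v}_{L^p}\leq C\sqrt{p}\,\norm{v}_{H^1}$ handles \eqref{lem:sobolev-1} for $2\leq p<\infty$; Morrey gives \eqref{lem:sobolev-2}; and your coordinate-slicing computation for \eqref{lem:sobolev-3} is the classical Ladyzhenskaya argument, which in fact yields the sharper constant $\norm{u}_{L^4}^2\leq \sqrt{2}\,\norm{u}\,\norm{\nabla u}$ after the Young step $\norm{\partial_1 u}\norm{\partial_2 u}\leq \tfrac12\norm{\nabla u}^2$, comfortably within the stated constant $2$. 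Your deduction of \eqref{lem:sobolev-4} from \eqref{lem:sobolev-3} via the extension (which destroys the zero trace but only costs the lower-order term $\norm{u}^2$) is exactly the right mechanism, and you correctly read the statement's ``$f\in H^1_0(\Omega)$'' as a typo for $u\in H^1_0(\Omega)$, the hypothesis genuinely needed for the version without the $\norm{u}^2$ term. Two hairline points you left implicit but which are trivially repaired: the range $1\leq p<2$ in \eqref{lem:sobolev-1} follows from H\"older on the bounded domain $\Omega$, and the slicing identity in \eqref{lem:sobolev-3} should formally be run for $u\in C^\infty_c(\mathbb{R}^2)$ and then extended by density of such functions in $H^1_0(\Omega)$ (extended by zero) and in $H^1(\mathbb{R}^2)$.
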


\begin{lemma}\label{one-inequality}
Let \( y(t), g(t) \) be nonnegative continuous functions on \([0, +\infty)\), and let \( w(u) \) be a continuous, positive, and non-decreasing function for \( u > 0 \). Suppose that for $u_0>0$, $
\displaystyle\int_{u_0}^{+\infty} \frac{ds}{w(s)} =+\infty$
and the following integral inequality holds:
\[
y(t) \leq a + \int_0^t g(s) w(y(s))  ds, \quad \forall t \in [0, +\infty),
\]
where \( a\geq u_0 \) is a constant. Then, we have following two conclusions:
\begin{enumerate}
\item [\rm{(1)}]  For all \( t \in [0, +\infty)\),
\[
y(t) \leq G^{-1}\left( G(a) + \int_0^t g(s)  ds \right),\quad G(u) = \displaystyle\int_{u_0}^u \frac{ds}{w(s)}.
\]
\item [\rm{(2)}]   Particularly, if $\int_0^{+\infty} g(s)  ds<\infty$, we have
\[
y(t) \leq G^{-1}\left( G(a) + \int_0^{+\infty} g(s)  ds \right),\quad t \in [0, +\infty) .
\]
\end{enumerate}
\end{lemma}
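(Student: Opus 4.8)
The statement is a Bihari--LaSalle type generalization of Grönwall's inequality, and I would prove it by the classical majorant-plus-separation-of-variables argument. The plan is to introduce the running upper bound
\[
Y(t) = a + \int_0^t g(s)\, w(y(s))\, ds,
\]
so that the hypothesis reads $y(t) \le Y(t)$ for all $t$. Since $g$, $y$ are continuous and $w$ is continuous, the integrand $g(s)w(y(s))$ is continuous; hence $Y$ is $C^1$ with $Y'(t) = g(t)\, w(y(t))$, and moreover $Y(t) \ge Y(0) = a \ge u_0 > 0$ because $g, w \ge 0$. This last point is what keeps us inside the domain $[u_0,\infty)$ on which $G$ is defined.

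The key step is to convert the integral inequality into a differential one that separates. Using that $w$ is non-decreasing together with $y(t) \le Y(t)$, I get $w(y(t)) \le w(Y(t))$, whence
\[
Y'(t) = g(t)\, w(y(t)) \le g(t)\, w(Y(t)).
\]
Dividing by $w(Y(t)) > 0$ and recognizing the derivative of the composition $G\circ Y$ gives
\[
\frac{d}{dt} G(Y(t)) = \frac{Y'(t)}{w(Y(t))} \le g(t).
\]
Integrating from $0$ to $t$ and using $Y(0) = a$ yields $G(Y(t)) \le G(a) + \int_0^t g(s)\, ds$. Since $1/w > 0$, the function $G$ is strictly increasing, so $G^{-1}$ is well defined and increasing; applying it and recalling $y \le Y$ produces conclusion (1). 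Conclusion (2) is then immediate: as $g \ge 0$ the quantity $\int_0^t g\, ds$ is non-decreasing and bounded above by $\int_0^\infty g\, ds$, and monotonicity of $G^{-1}$ gives the uniform-in-$t$ bound.

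The one point that requires genuine care — and which I regard as the crux — is the global solvability of the inverted bound: the right-hand side $G^{-1}\!\big(G(a) + \int_0^t g\big)$ must be finite for every $t$. This is exactly where the divergence hypothesis $\int_{u_0}^{\infty} ds/w(s) = +\infty$ enters, for it forces $G$ to map $[u_0,\infty)$ onto $[0,\infty)$, so that $G^{-1}$ is defined on the whole half-line and no finite-time blow-up of the majorant can occur. I would state this range property explicitly before inverting. The remaining items — the $C^1$ regularity of $Y$, the chain rule for $G(Y(t))$, and the strict monotonicity of $G$ — are routine given the continuity and positivity assumptions on $w$, $g$, $y$, and do not present any real difficulty.
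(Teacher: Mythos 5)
Your proof is correct and follows essentially the same route as the paper's: you introduce the identical majorant $Y(t)=a+\int_0^t g(s)\,w(y(s))\,ds$ (the paper calls it $z(t)$), use the monotonicity of $w$ together with $y\le Y$ to obtain $Y'\le g\,w(Y)$, separate variables via $G$, and invoke the divergence hypothesis $\int_{u_0}^{\infty} ds/w(s)=+\infty$ to guarantee that $G^{-1}$ is defined on all of $[0,\infty)$ before inverting. No gaps; your explicit remark on the surjectivity of $G$ onto $[0,\infty)$ matches the corresponding step in the paper's argument.
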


\begin{proof}
Define the auxiliary function
\[
z(t) = a + \int_0^t g(s) w(y(s)) \, ds.
\]
By construction, the following hold:
\begin{enumerate}
    \item \( y(t) \leq z(t) \) for all \( t \in [0, T] \);
    \item \( z(0) = a \);
    \item \( z(t) \) is nonnegative and differentiable.
\end{enumerate}

Differentiating \( z(t) \) gives
\[
z'(t) = g(t) w(y(t)).
\]
Since \( w \) is non‑decreasing and \( y(t) \leq z(t) \), we have \( w(y(t)) \leq w(z(t)) \). Therefore,
\begin{equation}\label{eq:diff-ineq}
z'(t) \leq g(t) w(z(t)).
\end{equation}

Next, fix \( u_0 > 0 \) and define
\[
G(u) = \int_{u_0}^u \frac{ds}{w(s)}.
\]
Because \( w(s) > 0 \), the function \( G \) is strictly increasing and thus injective; we denote its inverse by \( G^{-1} \). Dividing inequality \eqref{eq:diff-ineq} by the positive quantity \( w(z(t)) \) yields
\[
\frac{z'(t)}{w(z(t))} \leq g(t).
\]
Integrating from \( 0 \) to \( t \) gives
\begin{equation}\label{eq:diff-ineq-2}
\int_0^t \frac{z'(s)}{w(z(s))} \, ds \leq \int_0^t g(s) \, ds. 
\end{equation}

Changing variables \( u = z(s) \) in the left‑hand integral, with \( du = z'(s)\, ds \), and noting that \( z(0) = a \) and \( z(t) \) is the upper limit, we obtain
\[
\int_0^t \frac{z'(s)}{w(z(s))} \, ds = \int_{a}^{z(t)} \frac{du}{w(u)} = G(z(t)) - G(a).
\]

Substituting this into \eqref{eq:diff-ineq-2} yields
\[
G(z(t)) - G(a) \leq \int_0^t g(s) \, ds,
\]
or equivalently
\begin{equation}\label{eq:G-bound}
G(z(t)) \leq G(a) + \int_0^t g(s) \, ds.
\end{equation}

By the hypothesis \( \int_{u_0}^{\infty} \frac{ds}{w(s)} = +\infty \), the function \( G \) maps \( [u_0, \infty) \) onto \( [0, \infty) \); hence \( G^{-1} \) is well‑defined on the right‑hand side of \eqref{eq:G-bound}. Applying \( G^{-1} \) (which is increasing) to both sides gives
\[
z(t) \leq G^{-1}\Bigl( G(a) + \int_0^t g(s) \, ds \Bigr).
\]

Finally, recalling that \( y(t) \leq z(t) \), we conclude
\[
y(t) \leq G^{-1}\Bigl( G(a) + \int_0^t g(s) \, ds \Bigr).
\]
\end{proof}
\begin{lemma}\label{one-inequality-2}
Let \( y(t), g(t), h(t), q(t) \) be nonnegative continuous functions on \([0, +\infty)\), 
\( x(t)\) be a continuous and uniformly bounded function on \([0, +\infty)\), 
and let \( w(u) \) be a continuous, positive, and non-decreasing function for \( u > 0 \). Suppose that for $u_0>0$ $\displaystyle\int_{u_0}^{+\infty} \frac{ds}{w(s)} =+\infty$,
and the following two derivative inequalities holds:
\begin{align}\label{two-equ}
\begin{cases}
y'(t)+x'(t) \leq g(t) w(y(t)) , \quad \forall t \in [0, +\infty),\\
-g(t) q(y(t)) -C\leq y'(t)+h(t) \leq g(t) q(y(t)) +C , \quad \forall t \in [0, +\infty), \quad C>0.
\end{cases}
\end{align}
Then, we have the following three conclusions:
\begin{enumerate}
\item [\rm{(1)}]  For all \( t \in [0, + \infty)\),
\[
y(t) \leq G^{-1}\left( G(a) + \int_0^t f(s)  ds \right),\quad G(u) = \displaystyle\int_{u_0}^u \frac{ds}{w(s)},
\]
where \( a\geq u_0 \) is a constant.
\item [\rm{(2)}]   Particularly, if $\int_0^{+\infty} g(s)  ds<\infty$, we have
\[
y(t) \leq G^{-1}\left( G(a) + \int_0^{+\infty} g(s) \,ds \right),\quad t \in [0, +\infty).
\]
\item [\rm{(3)}]  Further more, if $\int_0^{+\infty} g(s)  ds<\infty$, $y(t)=g(t)$
and $h(t)$ is uniformly bounded, we have
\[
\lim_{t\to +\infty}y(t)=0.
\]
\end{enumerate}
\end{lemma}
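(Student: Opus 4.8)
The plan is to dispatch the three conclusions in sequence, reducing (1) and (2) to the Bihari-type estimate already proved in Lemma \ref{one-inequality} and reserving the genuine work for the asymptotic statement (3), where the two-sided derivative bound in \eqref{two-equ} is exploited to produce uniform continuity of $y$.

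First, for (1), I would integrate the first inequality in \eqref{two-equ} over $[0,t]$. Writing $M := \sup_{t\ge 0}|x(t)| < \infty$ (finite by hypothesis), this gives
\[
y(t) \le y(0) - x(t) + x(0) + \int_0^t g(s)\, w(y(s))\, ds \le y(0) + 2M + \int_0^t g(s)\, w(y(s))\, ds.
\]
Setting $a := \max\{u_0,\, y(0)+2M\}$ places this in exactly the integral form required by Lemma \ref{one-inequality}, whose first conclusion yields (1) verbatim (with integrand $g$). For (2) I would simply note that $G^{-1}$ is increasing and $\int_0^t g \le \int_0^\infty g < \infty$, so the bound of (1) is dominated by the finite, $t$-independent constant $Y_{\max} := G^{-1}\bigl(G(a) + \int_0^\infty g(s)\,ds\bigr)$. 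In particular $y$ is uniformly bounded on $[0,\infty)$.

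The substance of the lemma is (3), where one additionally assumes $\int_0^\infty g\,ds < \infty$, $y = g$, and $h$ uniformly bounded. From $y=g$ and the finiteness of $\int_0^\infty g$, we get $y \in L^1(0,\infty)$. Combined with the uniform bound $0 \le y(t) \le Y_{\max}$ from (2), the nonnegative continuous function $q$ is bounded on the compact interval $[0,Y_{\max}]$ containing the range of $y$, say $q(y(t)) \le Q_{\max}$, while $g(t) = y(t) \le Y_{\max}$. The second (two-sided) inequality in \eqref{two-equ} then controls the derivative:
\[
|y'(t)| \le |h(t)| + g(t)\, q(y(t)) + C \le \sup_{s\ge 0}|h(s)| + Y_{\max} Q_{\max} + C,
\]
a uniform bound. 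Hence $y$ is Lipschitz, in particular uniformly continuous on $[0,\infty)$. With $y \ge 0$, $y \in L^1(0,\infty)$, and $y$ uniformly continuous, Barbalat's lemma forces $y(t) \to 0$ as $t \to \infty$, which is (3).

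The one load-bearing idea — and the only place any real obstacle arises — is recognizing that the two-sided bound in the second line of \eqref{two-equ} exists precisely to control $y'$ once $y$ (and therefore both $q(y)$ and $g=y$) is known to be bounded via (2). Without the uniform boundedness supplied by (2), the product $g\,q(y)$ could not be dominated, and without the resulting uniform bound on $y'$ one could not upgrade integrability $y \in L^1$ into decay $y(t)\to 0$. Everything else is a mechanical reduction to Lemma \ref{one-inequality} together with a direct application of Barbalat's lemma.
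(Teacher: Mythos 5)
Your proposal is correct and follows essentially the same route as the paper: integrating the first inequality of \eqref{two-equ} and absorbing the uniformly bounded $x(t)$ into the constant $a$ so that Lemma \ref{one-inequality} yields (1) and (2), and then, for (3), using the uniform bound on $y$ from (2) together with $y=g$, the continuity of $q$, the nonnegativity and uniform boundedness of $h$, and the two-sided inequality to obtain a uniform bound on $y'$, whence uniform continuity plus $y\in L^1(0,+\infty)$ and Barb\u{a}lat's lemma give $y(t)\to 0$. Your write-up is in fact slightly cleaner than the paper's at two points: you fix the $t$-dependence hidden in the paper's choice ``$a\geq y(0)+x(0)-x(t)$'' by taking $a:=\max\{u_0,\,y(0)+2M\}$ with $M=\sup_t|x(t)|$, and you make explicit the compactness argument bounding $q(y(t))\leq Q_{\max}$, which the paper leaves implicit.
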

\begin{proof}
From the inequality $y'(t) + x'(t) \le g(t) w(y(t))$, we obtain
\[
y(t) \le y(0) + x(0) - x(t) + \int_0^t g(s) w(y(s)) \, ds
\le a + \int_0^t g(s) w(y(s)) \, ds,
\]
where $a \ge y(0) + x(0) - x(t)$ is a constant. The first two conclusions then follow directly from Lemma \ref{one-inequality}.

We now turn to the third conclusion. The condition $\int_0^{+\infty} g(s)  ds < \infty$ implies that $y(t)$ is uniformly bounded on $[0, +\infty)$. Moreover, using $y(t) = g(t)$ we have
\[
-y(t) q(y(t)) - C \le y'(t) \le y'(t) + h(t) \le y(t) q(y(t)) + C \le C_0,
\]
which, together with $\int_0^{+\infty} y(s)\,ds < \infty$ and the uniform boundedness of $h(t)$, implies that $y(t)$ is uniformly continuous on $[0, +\infty)$. Applying Barbălat's lemma yields
\[
\lim_{t\to +\infty} y(t) = 0.
\]
\end{proof}

\end{appendix}


\end{document}